\newtheorem{thm}{Theorem}[section]
\newtheorem{cor}[thm]{Corollary}
\newtheorem{prop}[thm]{Proposition}
\newtheorem{lem}[thm]{Lemma}
\theoremstyle{definition}
\newtheorem{defn}[thm]{Definition}
\newtheorem{exmp}[thm]{Example}
\newtheorem{fact}[thm]{Fact}
\theoremstyle{remark}
\newtheorem{rem}[thm]{Remark}
\newcommand{\tp}{\mathrm{tp}}
\newcommand{\acl}{\mathrm{acl}}
\newcommand{\dcl}{\mathrm{dcl}}
\newcommand{\qftp}{\mathrm{qftp}}
\newcommand{\diag}{\mathrm{diag}}
\newcommand{\diagf}{\mathrm{diag}_f}
\newcommand{\Aut}{\mathrm{Aut}}
\newcommand{\NSOP}[1]{\mathrm{NSOP}_{#1}}
\newcommand{\SOP}[1]{\mathrm{SOP}_{#1}}
\newcommand{\Sk}{\mathrm{Sk}}
\let\c@equation\c@thm
\numberwithin{equation}{section}
\def\Ind#1#2{#1\setbox0=\hbox{$#1x$}\kern\wd0\hbox to 0pt{\hss$#1\mid$\hss}\lower.9\ht0\hbox to 0pt{\hss$#1\smile$\hss}\kern\wd0}
\def\Notind#1#2{#1\setbox0=\hbox{$#1x$}\kern\wd0\hbox to 0pt{\mathchardef
	\nn=12854\hss$#1\nn$\kern1.4\wd0\hss}\hbox to
	0pt{\hss$#1\mid$\hss}\lower.9\ht0 \hbox to
	0pt{\hss$#1\smile$\hss}\kern\wd0}
\newcommand{\ind}[1][]{\mathop{\mathpalette\Ind{}^{\!\!\!\!\rlap{$\scriptscriptstyle{#1}$}\,\,\,\,}}}
\newcommand{\nind}[1][]{\mathop{\mathpalette\Notind{}^{\!\!\!\rlap{$\scriptscriptstyle{#1}$}\,\,\,}}}
\title{Generic expansion and Skolemization in $\NSOP{1}$ theories}
\date{\today}
\begin{document}
\begin{abstract}
We study expansions of $\NSOP{1}$ theories that preserve $\NSOP{1}$.  We prove that if $T$ is a model complete $\NSOP{1}$ theory eliminating the quantifier $\exists^{\infty}$, then the generic expansion of $T$ by arbitrary constant, function, and relation symbols is still $\NSOP{1}$. We give a detailed analysis of the special case of the theory of the generic $L$-structure, the model companion of the empty theory in an arbitrary language $L$. Under the same hypotheses, we show that $T$ may be generically expanded to an $\NSOP{1}$ theory with built-in Skolem functions. In order to obtain these results, we establish  strengthenings of several properties of Kim-independence in $\NSOP{1}$ theories, adding instances of algebraic independence to their conclusions.
\end{abstract}
\author{Alex Kruckman and Nicholas Ramsey}
\maketitle
\section{Introduction}\label{sec:intro}

Many of the early developments in the study of simple theories were guided by the thesis that a simple theory can be understood as a stable theory plus some `random noise.'  This loose intuition became a concrete recipe for creating new simple theories:  start with a stable theory, and, through some kind of generic construction, add additional random structure in an expanded language.  This strategy was pursued by Chatzidakis and Pillay \cite{chatzidakis1998generic}, who showed that adding a generic predicate or a generic automorphism to a stable theory results in a simple theory which is, in general, unstable.  In the case of adding a generic predicate, it suffices to assume that the base theory is simple; that is, expansion by a generic predicate preserves simplicity. The paper \cite{chatzidakis1998generic} spawned a substantial literature on generic structures and simple theories, which in turn shed considerable light on what a general simple theory might look like.  

We are interested in using generic constructions to produce new examples of $\NSOP{1}$ theories.  The class of $\NSOP{1}$ theories, which contains the class of simple theories, was isolated by D\v{z}amonja and Shelah \cite{dvzamonja2004maximality} and later investigated by Shelah and Usvyatsov \cite{shelah2008more}.  Until recently, very few non-simple examples were known to lie within this class.  A criterion, modeled after the well-known theorem of Kim and Pillay characterizing the simple theories as those possessing a well-behaved independence relation, was observed by Chernikov and the second-named author in \cite{ArtemNick}.  This criterion was applied to show that the theory of an $\omega$-free PAC field of characteristic zero and the theory of an infinite dimensional vector space over an algebraically closed field with a generic bilinear form are both $\NSOP{1}$.  That paper also showed, by a variation on a construction of Baudisch, that a simple theory obtained as a Fra\"iss\'e limit with no algebraicity may be `parametrized' to produce an $\NSOP{1}$ theory which is, in general, non-simple.  Later, Kaplan and the second-named author developed a general theory of independence in $\NSOP{1}$ theories, called \emph{Kim-independence}, which turns out to satisfy many of the familiar properties of forking independence in simple theories (e.g.\ extension, symmetry, the independence theorem, etc.) \cite{KRKim}.  In this paper, we apply this theory of independence to verify that certain generic constructions preserve $\NSOP{1}$.  

In Section~\ref{sec:nsop1}, we review the theory of Kim-independence in $\NSOP{1}$ theories and make some technical contributions to this theory. We establish strengthenings of the extension property, the chain condition, and the independence theorem for Kim-independence, obtaining additional instances of algebraic independence in their conclusions (see Definition~\ref{def:algreas}, and Theorems~\ref{thm:algreasext}, \ref{thm:algreaschain}, and \ref{thm:algreasind}). The main deficiency of Kim-independence is the failure of base monotonicity, and this work can be viewed as an effort to circumvent that deficiency, since the instances of algebraic independence that we need would be automatic in the presence of base monotonicity (see Remarks~\ref{rem:simple} and~\ref{rem:basemon}).

Section~\ref{sec:emptytheory} is dedicated to an analysis of the theory $T^\emptyset_L$ of the generic $L$-structure (the model completion of the empty theory in an arbitrary language $L$).  The work in this section was motived by a preprint of Je\v{r}\'{a}bek \cite{jerabek}. In an early draft of~\cite{jerabek}, Je\v{r}\'{a}bek showed that $T^\emptyset_L$ is always $\NSOP{3}$, regardless of the language. He asked if this could be improved to $\NSOP{1}$ and if $T^\emptyset_L$ weakly eliminates imaginaries. We give positive answers to these questions, and we characterize Kim-independence and forking independence in this theory. In a subsequent draft of~\cite{jerabek}, Je\v{r}\'{a}bek also independently answered both questions.

But Je\v{r}\'{a}bek's first question suggested a much more general one.  An $L$-theory $T$ may be considered as an $L'$-theory for any language $L'$ that contains $L$.  A theorem of Winkler \cite{winkler1975model} establishes that, as an $L'$-theory, the theory $T$ has a model completion $T_{L'}$, provided that $T$ is model complete and eliminates the quantifier $\exists^{\infty}$.  The theory $T_{L'}$ axiomatizes the generic expansion of $T$ by the new constants, functions, and relations of $L'$.  Using the theory developed in Section~\ref{sec:nsop1}, we show that if $T$ is $\NSOP{1}$, then $T_{L'}$ is as well; that is, generic expansions preserve $\NSOP{1}$.

In \cite{winkler1975model}, Winkler also showed that if $T$ is a model complete theory eliminating the quantifier $\exists^{\infty}$, then $T$ has a generic Skolemization $T_{\Sk}$.  More precisely, if $T$ is an $L$-theory, one may expand the language by adding a function symbol $f_{\varphi}$ for each formula $\varphi(\overline{x},y)$ of $L$.  And $T$,  together with axioms asserting that each $f_{\varphi}(\overline{x})$ acts as a Skolem function for $\varphi(\overline{x},y)$, has a model companion.  This result was used by N\"{u}bling in \cite{nubling2004adding}, who showed that one may Skolemize \emph{algebraic} formulas in a simple theory while preserving simplicity.  N\"ubling further observed that, in general, adding a generic Skolem function for a non-algebraic formula produces an instance of the tree property, and hence results in a non-simple theory.  We show, however, that generic Skolemization preserves $\NSOP{1}$.  By iterating, we show that any $\NSOP{1}$ theory eliminating the quantifier $\exists^{\infty}$ can be expanded to an $\NSOP{1}$ theory with built-in Skolem functions, and we also characterize Kim-independence in the expansion in terms of Kim-independence in the original theory. This result is of intrinsic interest, but it also provides a new technical tool in the study of Kim-independence in $\NSOP{1}$ theories, which, at least at its current stage of development, only makes sense when the base is a model.  Preservation of $\NSOP{1}$ by generic expansion and generic Skolemization is established in Section~\ref{sec:expsko}.  

\section{$\NSOP{1}$ and independence}\label{sec:nsop1}

\subsection{Preliminaries on $\NSOP{1}$}

Throughout this section, we fix a complete theory $T$ and a monster model $\mathbb{M}\models T$. 

\begin{defn}
A formula $\varphi(x;y)$ has $\SOP{1}$ modulo $T$ if there is a tree of tuples $(a_{\eta})_{\eta \in 2^{<\omega}}$ in $\mathbb{M}$ so that:
\begin{itemize}
\item For all $\eta \in 2^{\omega}$, the partial type $\{\varphi(x;a_{\eta | \alpha}) \mid \alpha < \omega\}$ is consistent.
\item For all $\nu,\eta \in 2^{<\omega}$, if $\nu \frown \langle 0 \rangle \unlhd \eta$ then $\{\varphi(x;a_{\eta}),\varphi(x;a_{\nu \frown \langle 1 \rangle})\}$ is inconsistent.  
\end{itemize}
The theory $T$ is $\NSOP{1}$ if no formula has $\SOP{1}$ modulo $T$.  An incomplete theory is said to be $\NSOP{1}$ if every completion is $\NSOP{1}$.  
\end{defn}

\begin{defn}
We call any $p \in S(\mathbb{M})$ a \emph{global type}.  A global type $p$ is $A$\emph{-invariant} if, for all formulas $\varphi(x;y)$, if $b \equiv_{A} b'$, then $\varphi(x;b) \in p$ if and only if $\varphi(x;b') \in p$ (equivalently, $p$ is invariant under the action of $\text{Aut}(\mathbb{M}/A)$ on $S(\mathbb{M})$).  If $p$ is a global $A$-invariant type, a \emph{Morley sequence in $p$ over $A$} is a sequence $(b_{i})_{i \in I}$ from $\mathbb{M}$ so that $b_{i} \models p|_{A(b_{j})_{j < i}}$. We denote by $p^{\otimes n}|_A$ the type $\tp(b_{i_1},\dots,b_{i_{n}}/A)$ when $i_1<i_2<\dots<i_n$. By invariance, this type does not depend on the choice of Morley sequence $(b_i)_{i\in I}$ or indices $i_k$.
\end{defn}

\begin{defn}
Fix a model $M\prec \mathbb{M}$.  
\begin{enumerate}
\item A formula $\varphi(x;b)$ \emph{Kim-divides} over $M$ if there is an $M$-invariant global type $q \supseteq \text{tp}(b/M)$ so that if $(b_{i})_{i <\omega}$ is a Morley sequence over $M$ in $q$, then $\{\varphi(x;b_{i}) \mid i <\omega \}$ is inconsistent. 
\item A partial type $p(x)$ \emph{Kim-divides} over $M$ if $p(x)$ implies some formula which Kim-divides over $M$.   
\item We write $a \ind[K]_{M} b$ for the assertion that $\text{tp}(a/Mb)$ does not Kim-divide over $M$. 
\end{enumerate} 
\end{defn}

A well-known theorem of Kim and Pillay characterizes the simple theories as those theories with a notion of independence satisfying certain properties\textemdash this serves both as a useful way to establish that a theory is simple and as a method to characterize forking independence for the given theory.  An analogous criterion for establishing that a theory is $\NSOP{1}$ was proved in \cite{ArtemNick}.  Later, the theory of Kim-independence was developed and it was observed in \cite{KRKim} that this criterion gives rise to an abstract characterization of $\ind[K]$.

\begin{thm} \label{criterion} \cite[Proposition 5.8]{ArtemNick} \cite[Theorem 9.1]{KRKim}
Assume there is an \(\Aut(\mathbb{M})\)-invariant ternary relation \(\ind\) on small subsets of \(\mathbb{M}\) satisfying the following properties, for an arbitrary $M\prec \mathbb{M}$ and arbitrary tuples from $\mathbb{M}$:
\begin{enumerate}
\item Strong finite character:  if \(a \nind_{M} b\), then there is a formula \(\varphi(x,b,m) \in \text{tp}(a/Mb)\) such that for any \(a' \models \varphi(x,b,m)\), \(a' \nind_{M} b\).
\item Existence over models: \(a \ind_{M} M\).
\item Monotonicity: if \(aa' \ind_{M} bb'\), then \(a \ind_{M} b\).
\item Symmetry: if \(a \ind_{M} b\), then \(b \ind_{M} a\).
\item The independence theorem: if \(a \ind_{M} b\), \(a' \ind_{M} c\), \(b \ind_{M} c\) and $a \equiv_{M} a'$, then there exists $a''$ with $a'' \equiv_{Mb} a$, $a'' \equiv_{Mc} a'$, and $a'' \ind_{M} bc$.
\end{enumerate}
Then \(T\) is NSOP\(_{1}\) and $\ind$ strengthens $\ind[K]$, i.e.\ if $a \ind_{M} b$, then $a \ind[K]_{M} b$.  If $\ind$ satisfies the following additional property, then $\ind = \ind[K]$ over models, i.e.\ $a\ind_M b$ if and only if $a\ind[K]_M b$:
\begin{enumerate}
\addtocounter{enumi}{5}
\item Witnessing:  if $a \nind_{M} b$, then there exists a formula $\varphi(x,b,m)\in \tp(a/Mb)$, such that for any Morley sequence $(b_{i})_{i <\omega}$ over $M$ in a global $M$-finitely satisfiable type extending $\text{tp}(b/M)$, $\{\varphi(x,b_{i},m) \mid i <\omega\}$ is inconsistent. 
\end{enumerate}
\end{thm}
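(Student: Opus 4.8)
The plan is to prove the three assertions in order: (1) $T$ is $\NSOP{1}$; (2) $\ind$ strengthens $\ind[K]$ over models; and (3), under the additional witnessing hypothesis, $\ind = \ind[K]$ over models. Before any of this, one extracts from the five listed properties the usual further structure of an independence relation over models. In particular, one derives an \emph{extension} property for $\ind$ --- given $a\ind_M b$ and any $c$, there is $a'\equiv_{Mb}a$ with $a'\ind_M bc$ --- from existence over models together with symmetry, monotonicity and the independence theorem; and then, from extension and the independence theorem, the usual machinery for building long $\ind$-independent sequences and trees over a model, exactly as developed in \cite{ArtemNick, KRKim}.

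For (1): suppose toward a contradiction that some formula $\varphi(x;y)$ has $\SOP{1}$, witnessed by a tree $(a_\eta)_{\eta\in 2^{<\omega}}$. Using the modelling property for strongly indiscernible trees (Scow, Kim--Kim--Scow, Takeuchi--Tsuboi) together with the extension property just derived, one arranges that $\varphi$ still has $\SOP{1}$ witnessed by a tree that is strongly indiscernible over some model $M$ and that is moreover $\ind$-independent over $M$ in the appropriate tree sense --- a ``$\ind$-Morley tree'', as in \cite{KRKim}. From such a tree one reads off a model $M$, tuples $b_0\equiv_M b_1$ with $b_0\ind_M b_1$ and $\{\varphi(x;b_0),\varphi(x;b_1)\}$ inconsistent, together with $a_0\equiv_M a_1$ satisfying $a_i\ind_M b_i$ and $\models\varphi(a_i;b_i)$ for $i=0,1$. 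Applying the independence theorem over $M$ to $a_0\ind_M b_0$, $a_1\ind_M b_1$, $b_0\ind_M b_1$ and $a_0\equiv_M a_1$ produces $a''$ with $a''\equiv_{Mb_0}a_0$ and $a''\equiv_{Mb_1}a_1$, hence $\models\varphi(a'';b_0)\wedge\varphi(a'';b_1)$, contradicting the inconsistency of $\{\varphi(x;b_0),\varphi(x;b_1)\}$. (This is the abstract analog, for $\ind$, of the fact established in \cite{KRKim} that in an $\NSOP{1}$ theory Kim-dividing of a formula is witnessed by $\ind[K]$-Morley sequences.) Hence no formula has $\SOP{1}$, so $T$ is $\NSOP{1}$.

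For (2): let $a\ind_M b$ and let $\varphi(x;b)\in\tp(a/Mb)$; it suffices to show $\varphi(x;b)$ does not Kim-divide over $M$. By the sequence-building machinery above, one obtains a global type $q$ finitely satisfiable in $M$ and extending $\tp(b/M)$, a Morley sequence $(b_i)_{i<\omega}$ in $q$ over $M$, and some $a'\equiv_{Mb}a$ such that $(b_i)$ is $Ma'$-indiscernible; since $\models\varphi(a';b_0)$, indiscernibility gives $\models\varphi(a';b_i)$ for all $i$, so $\{\varphi(x;b_i):i<\omega\}$ is consistent. As $q$ is $M$-invariant and $T$ is $\NSOP{1}$ by (1), Kim's Lemma for $\NSOP{1}$ theories \cite{KRKim} implies $\varphi(x;b)$ does not Kim-divide over $M$. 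Since $\varphi$ was arbitrary in $\tp(a/Mb)$, we get $a\ind[K]_M b$. For (3): by (2) it remains to show $a\nind_M b\Rightarrow a\nind[K]_M b$. Assume $a\nind_M b$. By witnessing there is $\varphi(x,b,m)\in\tp(a/Mb)$, with $m$ from $M$, such that $\{\varphi(x,b_i,m):i<\omega\}$ is inconsistent for every Morley sequence $(b_i)_{i<\omega}$ over $M$ in a global $M$-finitely-satisfiable type extending $\tp(b/M)$. Fixing such a type $q$ (e.g.\ any global coheir of $\tp(b/M)$) and a Morley sequence in it, and noting $q$ is $M$-invariant, we conclude $\varphi(x,b,m)$ Kim-divides over $M$; hence $\tp(a/Mb)$ Kim-divides over $M$, i.e.\ $a\nind[K]_M b$.

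The main obstacle is the technical core of step (1): the passage from an arbitrary $\SOP{1}$-witnessing tree to one that is strongly indiscernible over a model and independent with respect to $\ind$, so that the hypotheses of the independence theorem can actually be met (in particular, obtaining $a_i\ind_M b_i$ while keeping $\models\varphi(a_i;b_i)$ and $a_0\equiv_M a_1$). Once that configuration is in hand the contradiction is immediate, and steps (2) and (3) follow quickly --- from the $\ind$-sequence-building machinery together with Kim's Lemma, and from the witnessing property together with the existence of global coheirs, respectively.
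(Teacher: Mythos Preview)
This theorem is not proved in the paper; it is quoted, with attribution to \cite[Proposition 5.8]{ArtemNick} and \cite[Theorem 9.1]{KRKim}, and no argument appears in the text. So there is no proof in the paper to compare against, and your outline is essentially a reconstruction of the arguments in those references.

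Your sketch is broadly correct, but there is a genuine gap in step (2). You write that ``by the sequence-building machinery above, one obtains a global type $q$ finitely satisfiable in $M$ \ldots a Morley sequence $(b_i)$ in $q$ over $M$, and some $a'\equiv_{Mb}a$ such that $(b_i)$ is $Ma'$-indiscernible.'' But the machinery you derive from properties (1)--(5) produces $\ind$-independent sequences, not coheir Morley sequences; there is no reason an abstract $\ind$-Morley sequence should be a Morley sequence for any global $M$-invariant type, so Kim's Lemma does not apply to it. The missing ingredient is the converse observation: every coheir Morley sequence over $M$ is automatically $\ind$-independent. Indeed, if $b_i\nind_M b_{<i}$, then by strong finite character some $\psi(y;b_{<i},m)\in\tp(b_i/Mb_{<i})$ has the property that every realization is $\nind_M b_{<i}$; by finite satisfiability $\psi$ is realized by some $m'\in M$, whence $m'\nind_M b_{<i}$, contradicting existence over models together with monotonicity. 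With this in hand your argument goes through: start from an arbitrary coheir $q\supseteq\tp(b/M)$, take a Morley sequence $(b_i)$ in $q$, note it is $\ind$-independent, iterate the independence theorem (and apply Ramsey) to produce the required $a'$, and then invoke Kim's Lemma.

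Your treatment of (1) honestly flags the technical core and defers it to the cited literature, which is appropriate for a quoted result; your argument for (3) is correct as written.
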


\begin{thm} \label{Kimprops} \cite{KRKim}
If $T$ is $\NSOP{1}$, then $\ind[K]$ satisfies the properties (1)-(6) in Theorem~\ref{criterion}, as well as 
\begin{enumerate}
\setcounter{enumi}{6}
\item Extension: if $a\ind_M b$, then for any $c$, there exists $a'$ such that $a'\equiv_{Mb} a$ and $a'\ind_M bc$.
\item The chain condition: if $a\ind_M b$ and $I = (b_i)_{i<\omega}$ is a Morley sequence over $M$ in a global $M$-invariant type extending $\tp(b/M)$, then there exists $a'$ such that $a'\equiv_{Mb} a$, $a'\ind_M I$, and $I$ is $Ma'$-indiscernible.
\end{enumerate} 
\end{thm}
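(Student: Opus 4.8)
This theorem packages together several of the central results about Kim-independence; here is the line of argument I would follow. The common engine is \emph{Kim's lemma for $\NSOP{1}$ theories}: for $M \prec \mathbb{M}$, a formula $\varphi(x;a)$ Kim-divides over $M$ if and only if $\{\varphi(x;a_i) \mid i<\omega\}$ is inconsistent along \emph{every} Morley sequence $(a_i)_{i<\omega}$ over $M$ in \emph{every} global $M$-invariant type extending $\tp(a/M)$, not merely along one such sequence. The plan is to prove this first, by contraposition. Given an $M$-invariant $p \supseteq \tp(a/M)$ witnessing Kim-dividing (so $\{\varphi(x;a_i)\}$ is $k$-inconsistent along a Morley sequence in $p$) and an $M$-invariant $q \supseteq \tp(a/M)$ along whose Morley sequence $\varphi$ remains consistent, one splices the two configurations into a tree of parameters $(a_\eta)_{\eta \in \omega^{<\omega}}$: consistency of $\varphi$ along paths is inherited from $q$, and $k$-inconsistency of the relevant ``$1$-successor versus deeper $0$-descendant'' pairs is inherited from $p$. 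Passing to an $M$-indiscernible tree with the same local behaviour (Ramsey, compactness, and the modeling property for trees) and then reading off the $\SOP{1}$ pattern contradicts the hypothesis. The bookkeeping of this tree construction is the first technical hurdle.

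Granting Kim's lemma, properties (1)--(3) and the equality of Kim-forking with Kim-dividing over models are quick. Monotonicity (3) is immediate from the definition. For existence over models (2), the only global $M$-invariant extension of $\tp(m/M)$ with $m$ from $M$ is the realized type, along whose constant Morley sequence no formula of $\tp(a/Mm)$ is inconsistent, so nothing over $M$ Kim-divides over $M$. For strong finite character (1): if $\tp(a/Mb)$ Kim-divides over $M$ it implies some $\varphi(x,b,m)$ that Kim-divides over $M$, and Kim's lemma makes $\varphi(x,b,m)$ Kim-divide in the type over $Mb$ of any of its realizations, so $a' \models \varphi(x,b,m)$ forces $a' \nind[K]_{M} b$. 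Finally, a pigeonhole argument along a single invariant Morley sequence of a joint type shows that a formula implying a finite disjunction of Kim-dividing formulas is itself Kim-dividing, giving Kim-forking $=$ Kim-dividing over models.

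The independence theorem (5) is the second, and I expect the hardest, obstacle; I would again argue by contradiction. Assuming $a \ind[K]_{M} b$, $a' \ind[K]_{M} c$, $b \ind[K]_{M} c$ and $a \equiv_M a'$ but no common realization of $\tp(a/Mb) \cup \tp(a'/Mc)$ exists, one uses Kim's lemma to pass to conveniently chosen invariant types and builds a tree-indexed array of $M$-conjugates of $bc$ carrying the $a$-side and $a'$-side data along suitable branches; the obstruction to amalgamation at the nodes yields, after passage to an indiscernible tree, a formula exhibiting the $\SOP{1}$ pattern, a contradiction. A final application of Kim's lemma upgrades the amalgam to satisfy $a'' \ind[K]_{M} bc$, giving the stated form. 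Symmetry (4) then follows: from $a \ind[K]_{M} b$ one produces, using the independence theorem and Kim's lemma, a Morley sequence $(a_i)_{i<\omega}$ over $M$ in an $M$-invariant type extending $\tp(a/M)$ with $a_0 = a$ that is moreover $Mb$-indiscernible; if $b \nind[K]_{M} a$ were witnessed by $\psi(y;a,m) \in \tp(b/Ma)$ Kim-dividing over $M$, then $Mb$-indiscernibility gives $b \models \psi(y;a_i,m)$ for all $i$, so $\{\psi(y;a_i,m) \mid i<\omega\}$ is consistent, contradicting Kim's lemma. Producing that $Mb$-indiscernible invariant Morley sequence is the delicate bootstrap step here.

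Witnessing (6) is another consequence of Kim's lemma: the witness $\varphi(x,b,m)$ from (1) may be chosen to Kim-divide over $M$, and then its translates along any Morley sequence in a global $M$-finitely satisfiable (hence $M$-invariant) extension of $\tp(b/M)$ are inconsistent; so $\ind[K]$ satisfies (1)--(6) of Theorem~\ref{criterion} and is the canonical relation produced there. For the chain condition (8), given $a \ind[K]_{M} b$ and $I = (b_i)_{i<\omega}$ a Morley sequence over $M$ in a global $M$-invariant type extending $\tp(b/M)$ with $b_0 = b$, one uses that $b_j \ind[K]_{M} b_{<j}$ for each $j$ together with symmetry and the independence theorem to amalgamate $M$-conjugates of $a$ over the $b_i$ into a single $a^*$ with $a^* \equiv_{Mb} a$ and $a^* \ind[K]_{M} I$, and then extracts an $Ma^*$-indiscernible sequence with the same EM-type over $M$, transferring $\ind[K]$ back across it via Kim's lemma. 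Extension (7) is obtained similarly, using the coheir form of Kim's lemma to run the standard non-forking extension argument through a Morley sequence over $M$, finitely satisfiable in $M$, passing through $bc$. Summarising: I expect Kim's lemma and the independence theorem (with their tree combinatorics), together with the bootstrap from the independence theorem to symmetry, to absorb essentially all the difficulty, with the remaining clauses comparatively direct corollaries.
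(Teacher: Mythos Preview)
The paper does not prove this theorem: it is stated as a citation of results from \cite{KRKim} (Kaplan--Ramsey), with no argument given. So there is no ``paper's own proof'' to compare against; what you have written is a sketch of how the cited source establishes these properties, not an alternative to anything in this paper.

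That said, your outline is broadly faithful to the architecture of \cite{KRKim}: Kim's lemma as the engine, then (1)--(3) and (6) as soft consequences, the independence theorem via tree combinatorics, symmetry bootstrapped from the independence theorem through tree Morley sequences, and (7)--(8) via amalgamation along invariant Morley sequences. A couple of imprecisions are worth flagging. Your argument for existence over models is garbled: the point is simply that no formula over $M$ Kim-divides over $M$, since the constant sequence is a Morley sequence in any realized global type, so $\tp(a/M)$ never Kim-divides over $M$ regardless of $a$. Your sketch of symmetry suppresses the real content: producing an $Mb$-indiscernible Morley sequence in an $M$-invariant type extending $\tp(a/M)$ is exactly where the machinery of tree Morley sequences (and the chain condition, which in \cite{KRKim} is proved \emph{before} symmetry) enters, so calling it a ``delicate bootstrap step'' undersells how much work is hidden there. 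Finally, your description of the $\SOP{1}$ tree in Kim's lemma conflates the $\SOP{1}$ configuration with $\mathrm{TP}_1$-style inconsistency; the actual argument in \cite{KRKim} routes through an array characterization rather than directly building the $2^{<\omega}$-tree you describe.
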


We will also be interested in the relation of algebraic independence, $\ind[a]$. Algebraic independence comes close to satisfying the criteria in Lemma~\ref{criterion} in any theory, but it typically does not satisfy the independence theorem.

\begin{defn}
For any set $C\subset \mathbb{M}$ and any tuples $a$ and $b$, we define $$a\ind[a]_C b \iff \acl(Ca)\cap \acl(Cb) = \acl(C).$$
\end{defn}

\begin{lem} \label{algprops}
In any theory $T$, $\ind[a]$ satisfies extension, existence over models, monotonicity, symmetry, strong finite character, and witnessing.  
\end{lem}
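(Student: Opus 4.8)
The plan is to check the six properties one at a time. Three of them are essentially formal: existence over models, $a\ind[a]_M M$, holds because $\acl(M)\subseteq\acl(Ma)$; monotonicity holds because enlarging a side set only enlarges the corresponding $\acl$, so $\acl(Ca)\cap\acl(Cb)\subseteq\acl(Caa')\cap\acl(Cbb')$, while both intersections always contain $\acl(C)$; and symmetry is visibly built into the definition. (These three in fact hold over an arbitrary base set, not just a model.) The real work is in strong finite character, extension, and witnessing, and the main obstacle is producing the right formula for strong finite character, after which the other two follow from one common lemma.

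For \emph{strong finite character} over a model $M$: from $a\nind[a]_M b$ fix $e\in(\acl(Ma)\cap\acl(Mb))\setminus\acl(M)$. Pick an $M$-formula $\psi(x,y)$ with $\models\psi(a,e)$ and $\psi(a,\mathbb{M})$ finite, and strengthen it by conjoining ``$\exists^{=n}y\,\psi(x,y)$'' for $n=|\psi(a,\mathbb{M})|$, so that for every $a'$ each solution of $\psi(a',y)$ lies in $\acl(Ma')$. Pick an $Mb$-formula $\theta_0(y,b)$ isolating a finite $Mb$-definable set containing $e$; since $e\notin\acl(M)$, the finitely many solutions of $\theta_0(y,b)$ lying in $\acl(M)$ are each cut out by an $M$-formula with finitely many solutions, and $e$ satisfies none of these, so deleting them gives an $Mb$-formula $\theta(y,b)$ with $\models\theta(e,b)$, with $\theta(\mathbb{M},b)$ finite, and with $\theta(\mathbb{M},b)\cap\acl(M)=\emptyset$. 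Then $\varphi(x,b):=\exists y\,(\psi(x,y)\wedge\theta(y,b))\in\tp(a/Mb)$ (witnessed by $y=e$), and any $a'\models\varphi(x,b)$ yields a witness $e'$ with $e'\in\acl(Ma')$ from the $\psi$-clause and $e'\in\acl(Mb)\setminus\acl(M)$ from the $\theta$-clause, whence $a'\nind[a]_M b$. For later use, note that the property $\theta(\mathbb{M},b)\cap\acl(M)=\emptyset$ is preserved by an automorphism fixing $M$, so it persists with $b$ replaced by any $b^*\equiv_M b$.

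The engine for the last two properties is the observation that if $\tp(\bar a/Dc)$ is finitely satisfiable in $D$ then $\acl(D\bar a)\cap\acl(Dc)=\acl(D)$. Indeed, a putative $e$ in the intersection but not in $\acl(D)$ satisfies a bounded $D$-formula $\psi(y,\bar a)$ and lies in the finite $Dc$-definable set $X$ of its $Dc$-conjugates; finite satisfiability lets us replace $\bar a$ by some $\bar a^*\in D$ still making ``$\psi(\cdot,x)$ is bounded with a solution in $X$'' true, producing an element of $X\cap\acl(D)$; since all elements of $X$ have the same type over $D$ as $e$, this forces $e\in\acl(D)$, a contradiction. \emph{Extension} now follows: given $a\ind[a]_C b$ and $c$, realize a global coheir of $\tp(a/Cb)$ and restrict to $Cbc$, obtaining $a'\equiv_{Cb}a$ with $\tp(a'/Cbc)$ finitely satisfiable in $Cb$; the observation (with $D=Cb$) gives $\acl(Cba')\cap\acl(Cbc)=\acl(Cb)$, and $a'\equiv_{Cb}a$ transfers $a\ind[a]_C b$ to $a'\ind[a]_C b$, after which, for $e\in\acl(Ca')\cap\acl(Cbc)$, the first equality forces $e\in\acl(Cb)$ and then $a'\ind[a]_C b$ forces $e\in\acl(C)$, i.e.\ $a'\ind[a]_C bc$. \emph{Witnessing} follows similarly: take $\varphi(x,b,m)$ from the strong finite character argument above, let $(b_i)_{i<\omega}$ be a Morley sequence over $M$ in a global $M$-finitely satisfiable type extending $\tp(b/M)$, and suppose $a^*\models\{\varphi(x,b_i,m):i<\omega\}$; each instance supplies a witness $e_i$ in the fixed set $\psi(a^*,\mathbb{M})$ of size at most $n$, so $e_i=e_j$ for some $i<j$, and the $\theta$-clauses then place this common element in $(\acl(Mb_i)\cap\acl(Mb_j))\setminus\acl(M)$, i.e.\ $b_i\nind[a]_M b_j$; but $\tp(b_j/Mb_i)$ is finitely satisfiable in $M$, so the observation (with $D=M$) gives $b_i\ind[a]_M b_j$, a contradiction, so $\{\varphi(x,b_i,m):i<\omega\}$ is inconsistent.

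I expect the only real difficulty to be calibrating the strong finite character formula: the two bookkeeping moves---self-certifying the boundedness of $\psi$ and excising $\acl(M)$ from the solution set of $\theta$---are exactly what make ``$a'\nind[a]_M b$'' survive passage to realizations, and the second is also what drives the contradiction in witnessing, via its invariance along the Morley sequence. Everything else is a short $\acl$-chasing computation on top of the finite-satisfiability observation.
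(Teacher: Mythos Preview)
Your arguments for existence, monotonicity, symmetry, strong finite character, and witnessing are correct and close to the paper's. For strong finite character and witnessing the paper takes the two algebraic formulas to \emph{isolate} $\tp(e/Ma)$ and $\tp(e/Mb)$ (which automatically yields ``no solutions in $M$'' since $\acl(M)=M$ over a model), while you achieve the same effect by explicitly trimming; the pigeonhole/finite-satisfiability contradiction you run for witnessing is essentially the same as the paper's disjointness argument for the sets $\theta(\mathbb{M},b_i)$.

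The extension argument, however, has a genuine gap. You write ``realize a global coheir of $\tp(a/Cb)$'' to obtain $a'\equiv_{Cb}a$ with $\tp(a'/Cbc)$ finitely satisfiable in $Cb$, but such a coheir exists only when $\tp(a/Cb)$ is already finitely satisfiable in $Cb$, and this can fail even when $C=M$ is a model and $a\ind[a]_M b$. For instance, in DLO take $M=\mathbb{Q}$, $b>M$, and $a>b$: algebraic closure is trivial so $a\ind[a]_M b$, yet the formula $x>b$ in $\tp(a/Mb)$ has no realization in $M\cup\{b\}$, so no global coheir over $Mb$ exists. Thus the finite-satisfiability route to extension does not go through. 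The paper simply cites Hodges for this property; the standard self-contained argument uses Neumann's lemma (equivalently, a cardinality count producing enough pairwise-disjoint copies of $\acl(Cbc)\setminus\acl(Cb)$ over $\acl(Cb)$) rather than coheirs.
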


\begin{proof}
Extension for algebraic independence is proved in \cite[Theorem 6.4.5]{hodges1993model}.  Existence over models, monotonicity, and symmetry are immediate from the definitions.  

For strong finite character and witnessing, note that if $M \models T$, $a \nind^{a}_{M} b$, and $c \in (\text{acl}(Ma) \cap \text{acl}(Mb) ) \setminus M$ witnesses this, then one can choose $\chi(z;a,m) \in \text{tp}(c/Ma)$ and $\psi(z;b,m)\in \tp(c/Mb)$ which isolate these types. In particular, for some $k,k'<\omega$, we may choose $\chi$ and $\psi$ so that for all $a'$, there are at most $k$ realizations of $\chi(z;a',m)$, and for all $b'$, there are at most $k'$ realizations of $\psi(z;b',m)$. Note that, since $\psi(z;b,m)$ isolates $\tp(c/Mb)$, if $b'\equiv_M b$, then none of the realizations of $\psi(z;b',m)$ are in $M$. 

Let $\varphi(x,b,m)$ be the formula $\exists z\, (\chi(z;x,m)\land \psi(z;b,m))$. For any $a'$ satisfying $\varphi(x,b,m)$, the witness to the existential quantifier also witnesses $a'\nind[a]_M b$. This verifies strong finite character.

We use the same formula $\varphi(x,b,m)$ for witnessing. Let $(b_{i})_{i<\omega}$ be a Morley sequence over $M$ in a global $M$-finitely satisfiable type extending $\tp(b/M)$. If $\mathbb{M}\models \exists z\, (\psi(z;b_{i},m) \wedge \psi(z;b_{j},m))$ with $i<j$, then by finite satisfiability, there exists $m' \in M$ such that $\mathbb{M}\models \exists z \,(\psi(z;b_{i},m) \wedge \psi(z;m',m))$. But every realization of $\psi(z;m',m)$ is algebraic over $M$ and hence in $M$, while no realization of $\psi(z; b_i,m)$ is in $M$. It follows that the sets $\{\psi(\mathbb{M};b_i,m)\mid i<\omega\}$ are pairwise disjoint, and thus the partial type $\{\varphi(x,b_i,m)\mid i<\omega\}$ is inconsistent, since for any $a'$, the set $\chi(\mathbb{M};a',m)$ intersects at most $k$ of the sets $\psi(\mathbb{M};b_i,m)$.
\end{proof}

In Section~\ref{sec:expsko}, we will need strengthenings of the extension property and the independence theorem, which tell us that Kim-independence interacts with algebraic independence in a reasonable way. Along the way to proving the strengthening of the independence theorem, we will need a similar strengthening of the chain condition.

\begin{defn}\label{def:algreas}
Using the same notation as in Theorem~\ref{criterion}, we define the following properties of an abstract independence relation $\ind$:
\begin{itemize}
\item \emph{Algebraically reasonable extension}: if $a\ind_M b$, then for any $c$, there exists $a'$ such that $a'\equiv_{Mb} a$ and $a'\ind_M bc$, and further $a'\ind[a]_{Mb} c$. 
\item \emph{The algebraically reasonable chain condition}: if $a\ind_M b$ and $I = (b_i)_{i<\omega}$ is a Morley sequence over $M$ in a global $M$-invariant type extending $\tp(b/M)$, then there exists $a'$ such that $a'\equiv_{Mb} a$, $a'\ind_M I$, and $I$ is $Ma'$-indiscernible, and further $b_i\ind[a]_{Ma'} b_j$ for all $i\neq j$. 
\item \emph{The algebraically reasonable independence theorem}: if $a\ind_M b$, $a'\ind_M c$, $b\ind_M c$, and $a\equiv_M a'$, then there exists $a''$ such that $a''\equiv_{Mb} a$, $a''\equiv_{Mc} a'$, and $a''\ind_M bc$, and further $a''\ind[a]_{Mb} c$, $a''\ind[a]_{Mc} b$, and $b\ind[a]_{Ma''} c$. 
\end{itemize}
\end{defn}

\begin{rem}\label{rem:simple}
If $T$ is simple, then by \cite[Proposition 8.4]{KRKim}, $\ind[K]$ coincides with forking independence $\ind[f]$ over models, and the ``and further'' clauses of Definition~\ref{def:algreas} follow easily from the basic properties of forking independence.
\end{rem}

\begin{rem}\label{rem:basemon}
In \emph{any} theory, forking independence satisfies base monotonicity and strengthens algebraic independence. So for any set $A$ and tuples $a$, $b$, and $c$, $a \ind[f]_{A} bc$ implies $a \ind[f]_{Ab} c$, which implies $a \ind[a]_{Ab} c$.  Even in an $\NSOP{1}$ theory $T$, however, it is possible to have a model $M \models T$ and tuples $a$, $b$, and $c$, with $a \ind[K]_{M} bc$ and $a \nind^{a}_{Mb} c$.  See Example~\ref{ex:binaryfct} below.
\end{rem}

In the remainder of this section, we will show that in an $\NSOP{1}$ theory, Kim-independence satisfies the algebraically reasonable properties in Definition~\ref{def:algreas}. The reader who is not interested in the technicalities of these proofs  may skip directly to Section~\ref{sec:emptytheory}.

\subsection{An improved independence theorem}

We will first establish a slight improvement to the conclusion of the independence theorem, removing the apparent asymmetry between $a$, $b$, and $c$ in the conclusion. As in Remark~\ref{rem:simple}, this improved statement is easy in the context of a simple theory, where it follows from the basic properties of forking independence.

\begin{defn}\label{tmsdefs}
Suppose $T$ is $\NSOP{1}$, $M \prec\mathbb{M}$, and $(a_{i})_{i < \omega}$ is an $M$-indiscernible sequence.
\begin{enumerate}
\item Say $(a_{i})_{i < \omega}$ is a \emph{witness} for Kim-dividing over $M$ if, whenever $\varphi(x;a_{0})$ Kim-divides over $M$, $\{\varphi(x;a_{i}) \mid i <\omega\}$ is inconsistent.
\item Say $(a_{i})_{i < \omega}$ is a \emph{strong witness} to Kim-dividing over $M$ if, for all $n <\omega$, the sequence $(a_{n \cdot i}, a_{n \cdot i + 1}, \ldots, a_{n \cdot i + n-1})_{i < \omega}$ is a witness to Kim-dividing over $M$.  
\item If $I$ is any ordered index set, we say $(a_i)_{i\in I}$ is a \emph{strong witness} to Kim-dividing over $M$ if it has the same EM-type as a strong witness to Kim-dividing over $M$ indexed by $\omega$.
\end{enumerate}
\end{defn}

By \cite[Proposition 7.9]{KRKim}, in an NSOP$_{1}$ theory, the class of strong witnesses to Kim-dividing over $M$ coincides with the \emph{tree Morley sequences} over $M$.  As we will not need the Morley tree machinery of \cite{KRKim}, we will refer only to strong witnesses.  

\begin{fact} \label{tmsfacts}
Suppose $T$ is $\NSOP{1}$ and $M \models T$.  
\begin{enumerate}
\item Suppose $(a_{i},b_{i})_{i \in I}$ is a strong witness to Kim-dividing over $M$ and $J \subseteq I$ is an infinite subset.  Then $(a_{i})_{i \in J}$ and $(b_{i})_{i \in J}$ are strong witnesses to Kim-dividing over $M$.  \cite[Lemma 5.9]{KRKim}
\item If $b \equiv_{M} b'$ and $b \ind[K]_{M} b'$, then there is a strong witness to Kim-dividing over $M$, $(b_{i})_{i \in \mathbb{Z}}$, with $b_{0} = b$ and $b_{1} = b'$.  \cite[Corollary 6.6]{KRKim}
\item $a \ind[K]_{M} b$ if and only if there is an $Ma$-indiscernible sequence $(b_{i})_{i < \omega}$ which is a strong witness to Kim-dividing over $M$ with $b_{0} = b$.  \cite[Corollary 5.14]{KRKim}
\item If $a \ind[K]_{M} b$, and $I = (b_i)_{i\in \mathbb{Z}}$ is a strong witness to Kim-dividing over $M$ with $b_0 = b$, then there exists $a'\equiv_{Mb} a$ such that $I$ is $Ma'$-indiscernible and $a' \ind[K]_{M} I$.  \cite[Corollary 5.15]{KRKim}
\end{enumerate}
\end{fact}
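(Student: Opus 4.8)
Each of the four parts is quoted from \cite{KRKim}, so a proof amounts to reassembling standard facts about Kim-independence; here is how I would organize it.

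I would dispose of part~(1) and the right-to-left direction of part~(3) first, since these are formal consequences of Definition~\ref{tmsdefs}. For~(1), the point is that ``being a (strong) witness to Kim-dividing over $M$'' is inherited by projection to a subtuple of coordinates: if the fact that $\psi(x;a_0,b_0)$ Kim-divides over $M$ entails that $\{\psi(x;a_i,b_i)\mid i<\omega\}$ is inconsistent, then the special case in which $\psi$ mentions only the $b$-coordinates says exactly that $(b_i)$ is a witness, and the same reasoning, applied block by block, handles the ``strong'' clause; passing from $I$ to an infinite subset $J$ does not change the EM-type, as tree Morley sequences are $M$-indiscernible, so $(a_i)_{i\in J}$ and $(b_i)_{i\in J}$ remain tree Morley sequences. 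For $(\Leftarrow)$ of~(3): if $(b_i)_{i<\omega}$ is an $Ma$-indiscernible tree Morley sequence with $b_0=b$ while $\tp(a/Mb)$ Kim-divides over $M$, I would pick $\varphi(x;b)\in\tp(a/Mb)$ Kim-dividing over $M$; a tree Morley sequence is in particular a witness, so $\{\varphi(x;b_i)\mid i<\omega\}$ is inconsistent, whereas $a\models\varphi(x;b_0)$ together with $Ma$-indiscernibility forces $a\models\varphi(x;b_i)$ for all $i$, a contradiction. Hence $a\ind[K]_M b$.

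For the remaining parts the essential input I would use is Kim's lemma for $\NSOP{1}$: a Morley sequence over $M$ in a global $M$-invariant type extending $\tp(b/M)$ is a witness to Kim-dividing for formulas over $\tp(b/M)$. Regrouping such a sequence into consecutive $n$-blocks gives a Morley sequence in the $n$-fold product type, which is again $M$-invariant, so the same lemma shows the sequence is actually a \emph{strong} witness; that is, every Morley sequence in a global $M$-invariant type is a tree Morley sequence. Granting this, I would get $(\Rightarrow)$ of~(3) by taking $I=(b_i)_{i<\omega}$ to be a Morley sequence over $M$ in a global $M$-invariant type extending $\tp(b/M)$ with $b_0=b$, and applying the chain condition, Theorem~\ref{Kimprops}(8), to $a\ind[K]_M b$: this produces $a'\equiv_{Mb}a$ with $a'\ind[K]_M I$ and $I$ being $Ma'$-indiscernible, which is the required indiscernible tree Morley sequence. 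For part~(2) (reading the hypothesis as $b\equiv_M b'$ and $b\ind[K]_M b'$) I would construct a two-sided tree Morley sequence with $b_0=b$ and $b_1=b'$ by a back-and-forth that alternately extends the configuration to the right and to the left, using symmetry of $\ind[K]$ to exchange the two ends, extension (Theorem~\ref{Kimprops}(7)) to append new terms independently, and the independence theorem to amalgamate, arranging that the finite sequence in hand is at each stage an initial segment of a tree Morley sequence. For part~(4) I would combine $(\Rightarrow)$ of~(3) with a transport step: any tree Morley sequence over $M$ with first coordinate $b$ is conjugate over $Mb$ to the given two-sided sequence $I$, so pulling the $a'$ of $(\Rightarrow)$ of~(3) back along such an automorphism yields $a'\equiv_{Mb}a$ with $I$ being $Ma'$-indiscernible and $a'\ind[K]_M I$; alternatively one applies the chain condition in a two-sided form.

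The part I expect to be the real work is the existence and uniqueness of the tree Morley sequences underpinning~(2) and~(4): these are strictly more delicate than Morley sequences in invariant types, and the back-and-forth of~(2) --- as well as the conjugacy input to~(4) --- rests on the ``Morley tree'' apparatus of \cite{KRKim}, namely trees of tuples indexed by $\omega^{<\omega}$ satisfying spread-out indiscernibility together with Kim-independence conditions, plus a tree version of the lemma on extracting indiscernible sequences, from which a tree Morley sequence is read off as a path. Redeveloping all of this would mean reconstructing a substantial portion of \cite{KRKim}, so in practice I would --- as the authors do --- simply cite it and proceed.
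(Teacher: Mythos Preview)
The paper gives no proof here: Fact~\ref{tmsfacts} is stated with line-by-line citations to \cite{KRKim} and nothing more. Your proposal recognizes this and, like the paper, ultimately defers to \cite{KRKim} for the substantive content; so the ``approaches'' agree.

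Your added sketch is accurate where it is self-contained. The arguments for (1) and for the right-to-left direction of (3) are correct as written, and your observation that a Morley sequence in a global $M$-invariant type is already a strong witness (via Kim's lemma applied to the $n$-fold tensor power) is the right reason. In your sketch of $(\Rightarrow)$ in (3) you produce $a'\equiv_{Mb}a$ rather than $a$; one more line---apply an automorphism over $Mb$ taking $a'$ to $a$---finishes it. For (4), your ``transport'' route is the right shape, but note that it silently uses that any two tree Morley sequences over $M$ in $\tp(b/M)$ have the same EM-type over $M$; this is itself a nontrivial theorem of \cite{KRKim}, so you are not avoiding the Morley-tree machinery there, exactly as you concede in your last paragraph. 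Your alternative ``two-sided chain condition'' for (4) is essentially the statement of (4) itself, so it does not give an independent route.
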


\begin{thm} \label{strongit}
Suppose $T$ is $\NSOP{1}$, $M\prec \mathbb{M}$, \(a_0 \ind[K]_{M} b\), \(a_1 \ind[K]_{M} c\), \(b \ind[K]_{M} c\) and $a_0 \equiv_{M} a_1$. Then there exists $a$ with $a \equiv_{Mb} a_0$, $a \equiv_{Mc} a_1$, and $a \ind[K]_{M} bc$, and further $b\ind[K]_M ac$ and $c\ind[K]_M ab$.
\end{thm}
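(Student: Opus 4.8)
The plan is to bootstrap from the ordinary independence theorem (Theorem~\ref{criterion}(5) for $\ind[K]$, as given by Theorem~\ref{Kimprops}) to the stronger symmetric version, using the tree-Morley-sequence technology in Fact~\ref{tmsfacts} together with symmetry of $\ind[K]$. The key point is that the ordinary independence theorem already produces an $a$ with $a \equiv_{Mb} a_0$, $a \equiv_{Mc} a_1$, and $a \ind[K]_M bc$; by symmetry $bc \ind[K]_M a$, and by monotonicity $b \ind[K]_M a$ and $c \ind[K]_M a$. What is missing is the ``joined'' independence $b \ind[K]_M ac$ and $c \ind[K]_M ab$, i.e.\ that $a$ is independent from $c$ over $b$ (and from $b$ over $c$) in the appropriate symmetrized sense — but note these conclusions are stated with base $M$, not $Mb$, so what we really want is $b \ind[K]_M ac$ as a statement over $M$.

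First I would set up the ordinary independence theorem to get a candidate $a^*$ with $a^* \equiv_{Mb} a_0$, $a^* \equiv_{Mc} a_1$, $a^* \ind[K]_M bc$. The issue is that this does not obviously give $b \ind[K]_M a^*c$. To fix this, I would instead apply the independence theorem more cleverly, or iterate it. One natural approach: use Fact~\ref{tmsfacts}(2) to build a tree Morley sequence witnessing each of the three given independences, then use the chain condition (Theorem~\ref{Kimprops}(8)) to amalgamate. Concretely, since $b \ind[K]_M c$, extend to a tree Morley sequence $(b_i)_{i \in \mathbb{Z}}$ over $M$ with $b_0 = b$; by Fact~\ref{tmsfacts}(4) applied to $a_1 \ind[K]_M c$ we can also arrange interaction with $c$. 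The cleanest route, though, is probably to run the ordinary independence theorem, obtain $a^*$, and then \emph{separately} fix up the missing independences by a second application: we know $a^* \ind[K]_M bc$, hence $b \ind[K]_M a^*$ (symmetry + monotonicity) and $c \ind[K]_M a^*$; we also have $b \ind[K]_M c$; these three plus $b \equiv_M b$ feed into the independence theorem to produce $b'$ with $b' \equiv_{Ma^*} b$, $b' \equiv_{Mc} b$, and $b' \ind[K]_M a^*c$ — and then transport back via an automorphism fixing $Ma^*c$ (or $Mc$) to replace $a^*$ by a conjugate for which the desired independence holds, while checking the other constraints are preserved.

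The main obstacle is bookkeeping: each time we apply the independence theorem to fix one of the three ``and further'' clauses, we must make sure we do not destroy the clauses already established, nor the three base conditions $a \equiv_{Mb} a_0$, $a \equiv_{Mc} a_1$, $a \ind[K]_M bc$. So I expect the real content to be finding a single application (or a carefully ordered pair of applications) of the independence theorem whose output simultaneously satisfies all of $a \ind[K]_M bc$, $b \ind[K]_M ac$, $c \ind[K]_M ab$. Symmetry of $\ind[K]$ makes the three conditions more alike than they look: $a \ind[K]_M bc$, $b \ind[K]_M ac$, $c \ind[K]_M ab$ is exactly the statement that $\{a,b,c\}$ is ``pairwise-over-the-third'' independent, a totally symmetric condition. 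I would exploit this symmetry by choosing, among the three given hypotheses, the roles so that one application of the independence theorem with the right parameters lands all three at once — most likely by applying it to amalgamate $a_0$ (over $b$) and $a_1$ (over $c$) while simultaneously demanding independence of the amalgam from $bc$ over $M$, and then invoking symmetry to read off the other two clauses from the single conclusion $a \ind[K]_M bc$ once we also know $b \ind[K]_M c$. If that direct argument fails to give the joined independences, the fallback is the iterative fixup described above, tracking invariants at each step; I expect the authors do one of these two, and I would present whichever is shorter after checking the invariants carefully.
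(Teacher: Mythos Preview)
Your hoped-for shortcut --- ``invoking symmetry to read off the other two clauses from the single conclusion $a \ind[K]_M bc$ once we also know $b \ind[K]_M c$'' --- is simply false: $a \ind[K]_M bc$ together with $b \ind[K]_M c$ does \emph{not} imply $b \ind[K]_M ac$. If it did, the ordinary independence theorem would already give the strong version. This is exactly the failure of base monotonicity for $\ind[K]$ (Remark~\ref{rem:basemon}), and it is the whole reason the theorem requires a separate argument.

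Your fallback iterative fixup also has a real gap, not just bookkeeping. After the second application you have $b' \equiv_{Ma^*} b$, $b' \equiv_{Mc} b$, and $b' \ind[K]_M a^*c$. To make $b' = b$ you must apply an automorphism fixing either $Ma^*$ or $Mc$, but not both; whichever you fix, the other moves, and with it you lose one of $a^* \equiv_{Mb} a_0$, $a^* \equiv_{Mc} a_1$, or $a^* \ind[K]_M bc$. A third application to repair that damage breaks something else. There is no evident way to make this cycle terminate.

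The paper does begin, as you do, with one application of the ordinary independence theorem to get $a_2$. The missing idea is then to place $b$ and $c$ at \emph{different indices} in a single tree Morley sequence of pairs. From $b \ind[K]_M c$ and two uses of extension one finds $b',c'$ with $b'c \equiv_M bc \equiv_M bc'$ and $bc' \ind[K]_M b'c$; Fact~\ref{tmsfacts}(2) then yields a tree Morley sequence $I = (b_i,c_i)_{i \in \mathbb{Z}}$ with $(b_0,c_0) = (b,c')$ and $(b_1,c_1) = (b',c)$. Transport $a_2$ to $a_3$ over $M(b,c')$ and apply Fact~\ref{tmsfacts}(4) to get $a \equiv_{Mbc'} a_3$ with $I$ $Ma$-indiscernible and $a \ind[K]_M I$. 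Indiscernibility recovers $a \equiv_{Mb} a_0$ and $a \equiv_{Mc} a_1$. Now the tail $(b_i)_{i \le 0}$ is a tree Morley sequence starting at $b$ (Fact~\ref{tmsfacts}(1)) which is $Mac$-indiscernible because $c = c_1$ sits in the opposite tail; Fact~\ref{tmsfacts}(3) gives $b \ind[K]_M ac$. Symmetrically $(c_i)_{i \ge 1}$ gives $c \ind[K]_M ab$. The offset placement of $b$ and $c$ in a single paired sequence is the device your proposal is missing.
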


\begin{proof}
Applying the independence theorem, we obtain $a_2$ with $a_2\equiv_{Mb} a_0$, $a_2\equiv_{Mc} a_1$, and $a_2\ind[K]_M bc$. Since $b\ind[K]_M c$, by extension and an automorphism, there exists $b'$ with $b'\equiv_{Mc} b$ such that $b\ind[K]_M b'c$. By symmetry, $b'c\ind[K]_M b$, and by extension and an automorphism again, there exists $c'$ with $c'\equiv_{Mb} c$ such that $b'c\ind[K]_M bc'$. Altogether, $b'c \equiv_M bc \equiv_M bc'$, so by Fact~\ref{tmsfacts}(2), there is a strong witness to Kim-dividing over $M$, $I = (b_{i},c_{i})_{i \in \mathbb{Z}}$, so that $(b_{0},c_{0}) = (b,c')$ and $(b_{1},c_{1}) = (b',c)$.

Choose $a_3$ such that $a_3bc'\equiv_M a_2bc$. Then $a_3\ind[K]_M bc'$, so by Fact \ref{tmsfacts}(4), there exists $a \equiv_{Mbc'} a_3$ such that $I$ is $Ma$-indiscernible and $a \ind[K]_{M} I$.  By monotonicity, $a\ind[K]_M bc$. And we have $a\equiv_{Mb} a_3 \equiv_{Mb} a_2 \equiv_{Mb} a_0$, and by indiscernibility, $ac\equiv_M ac'\equiv_M a_3c' \equiv_M a_2c \equiv_M a_1c$, so $a\equiv_{Mc} a_1$.

By Fact \ref{tmsfacts}(1), $(b_{i})_{i \leq 0}$ is a strong witness to Kim-dividing over $M$ with $b_{0} = b$ which is $Mac$-indiscernible, so $b \ind[K]_{M} ac$ by symmetry and Fact~\ref{tmsfacts}(3).  Likewise, $(c_{i})_{i \geq 1}$ is a strong witness to Kim-dividing over $M$ with $c_{1} = c$ which is $Mab$-indiscernible, so $c \ind[K]_{M} ab$.  This completes the proof.  
\end{proof}

As an immediate corollary of the strengthened independence theorem, we get the following form of extension.

\begin{cor} \label{easycorollary}
Suppose $T$ is $\NSOP{1}$ and $M \models T$.  If $a \ind[K]_{M} b$ and $c \ind[K]_{M} a$, then there is $c' \equiv_{Ma} c$ such that $a \ind[K]_{M} bc'$ and $c' \ind[K]_{M} ab$.  
\end{cor}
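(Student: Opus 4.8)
The plan is to deduce this directly from the strengthened independence theorem, Theorem~\ref{strongit}; the only real content is to manufacture its third hypothesis and to match up the roles of the tuples correctly.

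First I would produce an auxiliary tuple. By existence over models, $c \ind[K]_{M} M$, so by extension (Theorem~\ref{Kimprops}(7), with base $M$) there is $a_1 \equiv_{M} c$ with $a_1 \ind[K]_{M} b$. Now I have in hand the four facts $c \ind[K]_{M} a$ (this is the given hypothesis $c \ind[K]_{M} a$), $a_1 \ind[K]_{M} b$ (just constructed), $a \ind[K]_{M} b$ (given), and $c \equiv_{M} a_1$.

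Next I would apply Theorem~\ref{strongit} under the substitution $(a_0, a_1, b, c) \mapsto (c,\, a_1,\, a,\, b)$: its hypotheses $a_0 \ind[K]_{M} b$, $a_1 \ind[K]_{M} c$, $b \ind[K]_{M} c$, and $a_0 \equiv_{M} a_1$ become precisely the four facts listed above. The theorem then yields a tuple $c'$ (playing the role of its output $a$) with $c' \equiv_{Ma} c$, $c' \equiv_{Mb} a_1$, and $c' \ind[K]_{M} ab$, and moreover — reading the extra conclusion $b \ind[K]_{M} ac$ of Theorem~\ref{strongit} through the substitution, where it becomes $a \ind[K]_{M} c'b$ — also $a \ind[K]_{M} bc'$ (rearranging the tuple on the right, which does not affect Kim-independence). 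The three properties $c' \equiv_{Ma} c$, $a \ind[K]_{M} bc'$, and $c' \ind[K]_{M} ab$ are exactly what is required, so this completes the proof.

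I do not expect a substantive obstacle here: the auxiliary tuple $a_1$ comes straight from existence over models and extension, and every other assertion is a direct quotation of Theorem~\ref{strongit}. The only point needing care is the bookkeeping of which tuple plays which role in that theorem, and in particular remembering to use the symmetrized ``and further'' clause $b \ind[K]_{M} ac$ rather than just the bare conclusion $a \ind[K]_{M} bc$.
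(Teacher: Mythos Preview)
Your proof is correct and essentially identical to the paper's: both construct an auxiliary realization of $\tp(c/M)$ that is Kim-independent from $b$ (you call it $a_1$, the paper calls it $c_*$) via existence plus extension, and then apply Theorem~\ref{strongit} under the same substitution $(a_0,a_1,b,c)\mapsto(c,a_1,a,b)$, reading off $c'\equiv_{Ma}c$, $c'\ind[K]_M ab$, and $a\ind[K]_M bc'$ from its conclusions. The paper is slightly terser (it just says ``by extension'' without first invoking existence over models), but the argument is the same.
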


\begin{proof}
By extension, choose $c_* \equiv_{M} c$ with $c_* \ind[K]_{M} b$. Then by Theorem~\ref{strongit}, there exists $c'$ such that $c'\equiv_{Ma} c$,  $c'\equiv_{Mb} c_*$, $c' \ind[K]_{M} ab$, $a \ind[K]_{M} bc'$, and $b \ind[K]_{M} ac'$, which is more than we need.
\end{proof}

\subsection{Kim-independence and algebraic independence}

We are now ready to show that Kim-independence satisfies the algebraically reasonable conditions of Definition~\ref{def:algreas} in any $\NSOP{1}$ theory.

\begin{thm}\label{thm:algreasext}
If $T$ is $\NSOP{1}$, then $\ind[K]$ satisfies algebraically reasonable extension.
\end{thm}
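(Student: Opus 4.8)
We need: given $a \ind[K]_M b$ and any $c$, find $a' \equiv_{Mb} a$ with $a' \ind[K]_M bc$ and additionally $a' \ind[a]_{Mb} c$. The plan is to produce the witness via a tree Morley sequence argument, exactly mimicking the structure of the proof of Theorem~\ref{strongit} but now using the extra strength that Fact~\ref{tmsfacts}(1) and (3) give us, together with an algebraic-independence bookkeeping. Let me think about what we would do.

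First I would apply ordinary extension (property (7) of Theorem~\ref{Kimprops}) to get $a_0 \equiv_{Mb} a$ with $a_0 \ind[K]_M bc$; the only remaining task is to arrange $a_0 \ind[a]_{Mb} c$ while keeping $a_0 \equiv_{Mb} a$ and $a_0 \ind[K]_M bc$. The natural move is to build, over the base $Mb$, a tree Morley sequence $I = (c_i)_{i\in\mathbb{Z}}$ over $Mb$ with $c_0 = c$ (using, say, $c \ind[K]_{Mb} c$ via existence over models, Fact~\ref{tmsfacts}(2), applied with base $Mb$ — note all the facts in Fact~\ref{tmsfacts} hold over an arbitrary model, but $Mb$ is not a model, so here is the first subtlety: we should instead work over $M$, using that $b c$-related data and then restrict). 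Let me reorganize: the cleaner route is to apply Fact~\ref{tmsfacts}(4) with the pair $(b,c)$. Since $a_0 \ind[K]_M bc$, take a tree Morley sequence $I = (b_i, c_i)_{i\in\mathbb{Z}}$ over $M$ with $(b_0,c_0) = (b,c)$; by Fact~\ref{tmsfacts}(4) there is $a' \equiv_{Mbc} a_0$ with $I$ being $Ma'$-indiscernible and $a' \ind[K]_M I$, hence $a' \ind[K]_M bc$ by monotonicity, and $a' \equiv_{Mb} a_0 \equiv_{Mb} a$. Now for the algebraic part: suppose $e \in \acl(Mba') \cap \acl(Mbc)$. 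Since $I$ is $Ma'$-indiscernible and $(b_0,c_0)=(b,c)$, $(b_1,c_1)$ realizes $\tp(bc/Ma')$, so $e$ has a conjugate $e' \in \acl(Ma'b_1)$ with $e' \equiv_{Ma'b} $ ... — here I would use that $(b_i)_{i \le 0}$ and the indiscernibility to force $e$ into $\acl(Mb)$: if $e \in \acl(Mbc)$ then applying the $Ma'b$-automorphism taking $(b,c) = (b_0,c_0)$ to $(b_0, c_{-1})$ fixes $e$ (as $e \in \acl(Ma'b)$... wait, we don't know $e \in \acl(Ma'b)$, only $\acl(Mba')$).

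So the key step — and the main obstacle — is the algebraic bookkeeping: showing $\acl(Mba') \cap \acl(Mbc) = \acl(Mb)$ from the setup. The right tool is a separation-of-variables argument using the tree Morley sequence. Concretely: take $e$ in the intersection. Because $e \in \acl(Mbc)$ and $I$ restricted to coordinates $\le 0$ is $Mba'$-indiscernible (being a tree Morley sequence over $M$ that is $Ma'$-indiscernible, with $b_0 = b$ fixed — one restricts to the subsequence sharing the first coordinate, or more carefully one runs the argument with $b$ absorbed into the base by choosing $I$ with constant first coordinate $b$), the elements $e_i \in \acl(Mbc_i)$ conjugate to $e$ over $Mba'$ are finitely many, so the $c_i$-part of the sequence, being a tree Morley sequence (hence with the relevant disjointness/indiscernibility), forces $e \in \acl(Mba')$ to actually lie in $\acl(Mb)$ — this is the same mechanism as in the witnessing proof of Lemma~\ref{algprops}, where a Morley sequence makes the algebraic closures pairwise-intersect only in the base. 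I expect the honest version requires choosing the tree Morley sequence $I$ over $M$ with $b_i = b$ constant (possible since $b \ind[K]_M \varnothing$ trivially, or by working over $Mb$ after checking the needed facts transfer, using that Kim-dividing over the model $M$ with parameters including $b$ is what's at play), so that the base $Mb$ is legitimately fixed throughout and Fact~\ref{tmsfacts}(1),(3),(4) apply to the $(c_i)$ over $M$ augmented by $b$.

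To summarize the steps in order: (1) use ordinary $\ind[K]$-extension to reduce to arranging only the algebraic conclusion; (2) build a tree Morley sequence over $M$ in $\tp(bc/M)$, arranged so the $b$-coordinate is constant equal to $b$ (so that $Mb$ genuinely serves as a fixed base); (3) apply Fact~\ref{tmsfacts}(4) to pull $a_0$ into a position $a'$ over which this sequence is indiscernible and Kim-independent, giving $a' \equiv_{Mb} a$ and $a' \ind[K]_M bc$; (4) run the algebraic-closure-disjointness argument along the $c_i$'s — as in the proof of witnessing in Lemma~\ref{algprops} — to conclude $\acl(Mba') \cap \acl(Mbc) = \acl(Mb)$, i.e.\ $a' \ind[a]_{Mb} c$. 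The delicate point throughout is legitimizing "work over the base $Mb$" when $Mb$ is not a model: this is handled by keeping $b$ as a constant coordinate in a tree Morley sequence over the actual model $M$, so all invocations of Fact~\ref{tmsfacts} are over $M$.
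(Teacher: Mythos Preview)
Your approach has a genuine obstacle that you identify but do not resolve. In step~(2) you want a tree Morley sequence $(b_i,c_i)_{i\in\mathbb{Z}}$ over $M$ with $b_i = b$ constant. This is impossible when $b\notin M$: by Fact~\ref{tmsfacts}(1) the coordinate sequence $(b_i)$ would itself be a tree Morley sequence over $M$, hence a witness to Kim-dividing; but the formula $x=b$ Kim-divides over $M$ (as $b\notin\acl(M)=M$), so $\{x=b_i\mid i\in\mathbb{Z}\}$ must be inconsistent, and by indiscernibility the $b_i$ are pairwise distinct. The alternative you float---working over $Mb$---runs straight into the problem that $Mb$ is not a model, which is exactly the difficulty the theorem is designed to circumvent. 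Without the constant $b$-coordinate, your step~(4) also breaks: the $Ma'$-automorphisms moving $(b_0,c_0)$ to $(b_i,c_i)$ no longer fix $b$, so an element $e\in\acl(Ma'b)$ need not be fixed, and you cannot push $e$ into $\bigcap_i\acl(Mbc_i)$.

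The paper's proof avoids tree Morley sequences entirely and uses a direct pigeonhole argument. Enumerate $\acl(Mb)$ by $\overline{b}$ and $\acl(Mbc)\setminus\acl(Mb)$ by $\overline{c}$; since $\overline{b}$ is algebraically closed, one can find $\kappa^+$ pairwise disjoint copies $\overline{c}^\alpha\equiv_{M\overline{b}}\overline{c}$ where $\kappa=|\acl(Mab)|$. Ordinary extension for $\ind[K]$ gives $a''\equiv_{M\overline{b}}a$ with $a''\ind[K]_M \overline{b}(\overline{c}^\alpha)_{\alpha<\kappa^+}$, and since $|\acl(Ma''b)|=\kappa$, some $\overline{c}^\alpha$ is disjoint from it. Moving that copy back to $\overline{c}$ finishes. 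The whole point is that the algebraic-independence conclusion is obtained by cardinality rather than by any indiscernibility or sequence argument, sidestepping the base-change issue completely.
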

\begin{proof}

Suppose we have a model $M$ and tuples $a$, $b$, $c$, with $a\ind[K]_M b$. Let $\overline{b}$ be a tuple enumerating $\acl(Mb)$, and let $\overline{c} = (c_i)_{i\in I}$ be a tuple enumerating $\acl(Mbc)\setminus \acl(Mb)$. Then we also have $a\ind[K]_M \overline{b}$ (\cite[Corollary 5.17]{KRKim}). And for any $a'$, we have $a'\nind[a]_{M\overline{b}} \overline{c}$ if and only if there is some index $i\in I$ such that $c_i\in \acl(Ma'b)$.

Let $\kappa = |\acl(Mab)|$. Since $\overline{b}$ enumerates an algebraically closed set, we can find pairwise disjoint tuples $(\overline{c}^\alpha)_{\alpha< \kappa^+}$ such that $\overline{c}^\alpha \equiv_{M\overline{b}}\overline{c}$ for all $\alpha$. By extension, there exists $a''\equiv_{M\overline{b}} a$ such that $a''\ind[K]_M \overline{b}(\overline{c}^\alpha)_{\alpha< \kappa^+}$. In particular, for every $\alpha$, $a''\ind[K]_M \overline{b}\overline{c}^\alpha$. And since $|\acl(Ma''b)| = \kappa$, there is some $\alpha$ such that $\overline{c}^\alpha$ is disjoint from $\acl(Ma''b)$, so $a''\ind[a]_{M\overline{b}}\overline{c}^\alpha$.

Let $\sigma$ be an automorphism moving $\overline{c}^\alpha$ to $\overline{c}$ and fixing $\overline{b}$, and let $a' = \sigma(a'')$. Then $a'\ind[K]_M bc$ and $a'\ind[a]_{Mb}c$. 
\end{proof}

\begin{lem} \label{weirdextension}
Suppose $T$ is $\NSOP{1}$ and $M \models T$.  If $b \ind[K]_{M} a$ and $c \ind[K]_{M} a$, then there is $c' \equiv_{M a} c$ such that $bc' \ind[K]_{M} a$ and $b \ind[a]_{Ma} c'$.  
\end{lem}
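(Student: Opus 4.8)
The plan is to mimic the structure of the proof of Theorem \ref{thm:algreasext}, but now working on the side of $a$, which must be held fixed up to $\equiv_{Ma}$, rather than on the side of $b$. First I would set up the algebraic bookkeeping: let $\overline{a}$ enumerate $\acl(Ma)$, so that $b \ind[K]_M \overline{a}$ and $c \ind[K]_M \overline{a}$ by \cite[Corollary 5.17]{KRKim}, and observe that for any $c'$, the condition $b \ind[a]_{Ma} c'$ fails precisely when $\acl(Mbc')$ contains some element of $\acl(Mac')$ outside $\acl(Ma)$ — or more usefully, I would phrase it as: it suffices to find $c' \equiv_{Ma} c$ with $bc' \ind[K]_M a$ and with $\acl(Mac') \cap \acl(Mbc') = \acl(Ma)$, which (after passing to $\overline{a}$, using that $c'$ is part of the left side) follows if no element enumerated by a suitable tuple lands in $\acl(Mb c')$.

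Next, using that $b \ind[K]_M \overline{a}$ and $c \ind[K]_M \overline{a}$, I would first apply Corollary \ref{easycorollary} (or directly Theorem \ref{strongit}, using $\overline{a}$ in place of the relevant parameter) to combine $b$ and $c$ over $\overline{a}$: since $b \ind[K]_M \overline{a}$ and $c \ind[K]_M \overline{a}$, there is $c_* \equiv_{M\overline{a}} c$ with $b c_* \ind[K]_M \overline{a}$, hence $bc_* \ind[K]_M a$ by monotonicity. Now to get the algebraic-independence conclusion I would run the "many disjoint copies" trick from Theorem \ref{thm:algreasext}: let $\overline{d} = (d_i)_{i \in I}$ enumerate $\acl(Mac_*) \setminus \acl(Ma)$ — note $c_* \subseteq \overline{d} \cup \acl(Ma)$ is not quite right, so more carefully I would take $\overline{d}$ to enumerate $\acl(M \overline{a} c_*) \setminus \acl(M\overline a)$ and arrange $c_*$ inside it. Set $\kappa = |\acl(M \overline a b)|$, and since $\overline{a}$ is algebraically closed over $M$, find pairwise disjoint $(\overline{d}^\alpha)_{\alpha < \kappa^+}$ with $\overline{d}^\alpha \equiv_{M\overline a} \overline d$. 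By extension on the independence $b \ind[K]_M \overline a$, get $b' \equiv_{M \overline a} b$ with $b' \ind[K]_M \overline{a}(\overline d^\alpha)_\alpha$; since $|\acl(M \overline a b')| = \kappa$, some $\overline d^\alpha$ is disjoint from $\acl(M\overline a b')$. Pulling $b'$ back to $b$ by an automorphism over $M\overline a$, and calling the corresponding copy of $c_*$ by the name $c'$, I get $c' \equiv_{M a} c$ (indeed $\equiv_{M\overline a}$), $bc' \ind[K]_M \overline a$ hence $bc' \ind[K]_M a$, and $\overline d$-copy disjoint from $\acl(Mb c')$, which gives $\acl(M \overline a c') \cap \acl(M b c') \subseteq \acl(M \overline a)$, i.e. $b \ind[a]_{Ma} c'$.

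The main obstacle I anticipate is the interplay between $\overline{a} = \acl(Ma)$ and the requirement $c' \equiv_{Ma} c$ (rather than $\equiv_{M\overline a} c$): I need to make sure the automorphisms and type-equalities I use are over $M\overline a$, which is stronger than over $Ma$, so that the final conclusions transfer down; this is fine since $\equiv_{M\overline a}$ implies $\equiv_{Ma}$, but I must check that $bc' \ind[K]_M a$ really does follow from $bc' \ind[K]_M \overline a$ (it does, by monotonicity/base-monotonicity-free reasoning: $\ind[K]$ over a model is preserved under shrinking the right-hand side, and $a \subseteq \overline a$). A secondary subtlety is confirming that $b \ind[a]_{Ma} c'$ is genuinely equivalent to the disjointness statement I arrange — I would spell out that $\acl(Mac') = \acl(M\overline a c')$ since $\overline a \subseteq \acl(Mac')$, and similarly match $\acl(Mbc')$ appropriately — but this is the same style of routine algebraic-closure manipulation already used in Theorem \ref{thm:algreasext}, so I expect no real trouble there.
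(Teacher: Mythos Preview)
There is a genuine gap in your argument, at the point where you conclude $bc' \ind[K]_M \overline{a}$. After applying extension to $b \ind[K]_M \overline{a}$ you obtain $b' \equiv_{M\overline{a}} b$ with $b' \ind[K]_M \overline{a}(\overline{d}^\alpha)_\alpha$, and hence by monotonicity $b' \ind[K]_M \overline{a}\,\overline{d}^\alpha$ for each $\alpha$. Transporting back to $b$, this says $b \ind[K]_M \overline{a} c'$, not $bc' \ind[K]_M \overline{a}$. In an $\NSOP{1}$ theory these are genuinely different: Kim-independence lacks base monotonicity and transitivity, so from $b \ind[K]_M \overline{a} c'$ and $c' \ind[K]_M \overline{a}$ one cannot in general deduce $bc' \ind[K]_M \overline{a}$. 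The information $bc_* \ind[K]_M \overline{a}$ that you obtained from Corollary~\ref{easycorollary} is lost when you pass to the disjoint copies $\overline{d}^\alpha$, since those copies are only required to satisfy $\overline{d}^\alpha \equiv_{M\overline{a}} \overline{d}$, with no constraint relating them to $b$.

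The paper circumvents this by reversing the roles: it produces many copies $(b_i)_{i<\omega}$ of $b$ over $Ma$, arranged (via algebraically reasonable extension) to be pairwise algebraically independent over $Ma$, and then uses Theorem~\ref{strongit} inductively to realize the partial type $\Gamma(x;a,b) = \tp(c/Ma) \cup \{\neg\varphi(x,b;a) : \varphi \text{ Kim-divides over } M\}$ simultaneously against all the $b_i$. Any such realization $c_*$ satisfies $b_i c_* \ind[K]_M a$ for every $i$, and then a pigeonhole argument over an indiscernible stretch of length $\kappa^+$ yields some $b_i$ with $b_i \ind[a]_{Ma} c_*$. The essential extra ingredient you are missing is this inductive use of the strengthened independence theorem to keep the joint independence $b_i c_* \ind[K]_M a$ alive across many $i$ at once.
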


\begin{proof}
We build a sequence $(b_i)_{i<\omega}$ by induction, such that for all $i<\omega$, the following conditions hold: 
\begin{enumerate}
\item $b_i \equiv_{Ma} b$.
\item $b_{i+1} \ind[K]_M ab_{\leq i}$.
\item $b_{i+1} \ind[a]_{Ma} b_{\leq i}$.
\end{enumerate}

Set $b_0 = b$, and given $b_{\leq i}$, as $b\ind[K]_M a$, use algebraically reasonable extension to find $b_{i+1}\equiv_{Ma} b$ such that $b_{i+1} \ind[K]_M a b_{\leq i}$ and $b_{i+1}\ind[a]_{Ma} b_{\leq i}$. 

In particular, note that $b_i\ind[a]_{Ma} b_j$ for all $i\neq j$.

Define a partial type $\Gamma(x;a,b)$ by 
$$
\Gamma(x;a,b) = \text{tp}(c/Ma) \cup \{\neg \varphi(x,b;a) \mid \varphi(x,y;a) \text{ Kim-divides over }M\}.
$$

\textbf{Claim}: For all $n<\omega$, $\bigcup_{i \leq n} \Gamma(x;a,b_{i})$ is consistent. 

\emph{Proof of claim}:  By induction on $n$, we will find $c_{n} \ind[K]_{M} ab_{\leq n}$ so that $c_{n} \models \bigcup_{i \leq n} \Gamma(x;a,b_{i})$, i.e.\ $c_n\equiv_{Ma} c$ and $b_ic\ind[K]_M a$ for all $i\leq n$.

For $n = 0$, the existence of such a $c_{0}$ is given by Corollary \ref{easycorollary}.  Suppose we have $c_{n} \ind[K]_{M} ab_{\leq n}$ realizing $\bigcup_{i \leq n} \Gamma(x;a,b_{i})$.  By extension, choose $c' \equiv_{M} c$ with $c' \ind[K]_{M} b_{n+1}$.  As $b_{n+1} \ind[K]_{M} ab_{\leq n}$, we may apply the strengthened independence theorem (Theorem \ref{strongit}), to find $c_{n+1} \models \text{tp}(c_{n}/Mab_{\leq n}) \cup \text{tp}(c'/Mb_{n+1})$ with $c_{n+1} \ind[K]_{M} ab_{\leq n+1}$ and $b_{n+1}c_{n+1} \ind[K]_{M} ab_{\leq n}$.  In particular, $b_{n+1}c_{n+1} \ind[K]_{M} a$, so $c_{n+1} \models \Gamma(x;a,b_{n+1})$.  This gives $c_{n+1} \models \bigcup_{i \leq n+1} \Gamma(x;a,b_{i})$. \qed

Let $\kappa = |\acl(Mac)|$, and let $(b_\alpha')_{\alpha<\kappa^+}$ be an $Ma$-indiscernible sequence locally based on $(b_i)_{i<\omega}$. Then we have $b'_\alpha\ind[a]_{Ma} b'_\beta$ for all $\alpha \neq \beta$, and $\bigcup_{\alpha<\kappa^+} \Gamma(x; a, b'_i)$ is consistent, by the claim. Let $c_*$ realize this partial type, so $c_*\equiv_{Ma} c$ and $b'_\alpha c_*\ind[K]_{M} a$ for all $\alpha$. 

Since the sets $\acl(Mab'_\alpha)$ are pairwise disjoint over $\acl(Ma)$, and $|\acl(Mac_*)| = \kappa$, there is some $\alpha<\kappa^+$ such that $\acl(Mab'_\alpha)$ is disjoint from $\acl(Mac)$ over $\acl(Ma)$, so $b'_\alpha\ind[a]_M c$. Since $b_\alpha\equiv_{Ma} b$, we can find an automorphism $\sigma$ fixing $Ma$ and moving $b'_\alpha$ to $b$. Taking $c' = \sigma(c_*)$, we have $c'\equiv_{Ma} c_*\equiv_{Ma} c$ and $bc'\ind[K]_M a$, as desired.   
\end{proof}

\begin{cor}\label{goodseq}
Suppose $T$ is $\NSOP{1}$ and $M \models T$.  If $b \ind[K]_{M} a$, then for any cardinal $\kappa$, there is an $Ma$-indiscernible sequence $I = (b_\alpha)_{\alpha < \kappa}$ with $b_{0} = b$, such that $b_\alpha \ind[a]_{Ma} b_\beta$ for all $\alpha \neq \beta$, and $I\ind[K]_M a$.
\end{cor}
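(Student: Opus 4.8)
The plan is to iterate Lemma~\ref{weirdextension} to build the desired sequence, using a compactness argument to pass from all finite approximations to a sequence of arbitrary length $\kappa$, together with an indiscernibility-extraction step. The key point is that Lemma~\ref{weirdextension}, applied with the roles of $b$ and $c$ interpreted appropriately, lets us add one new conjugate of $b$ at a time while controlling both Kim-independence from $a$ and pairwise algebraic independence over $Ma$. So first I would fix $b\ind[K]_M a$ and build, by induction on $n<\omega$, a finite sequence $(b_0,\dots,b_n)$ with $b_0 = b$ such that $b_i\equiv_{Ma} b$ for all $i$, $b_{\le n}\ind[K]_M a$, and $b_i\ind[a]_{Ma}b_j$ for all $i\ne j$. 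The base case is trivial. For the inductive step, given $b_{\le n}$ with $b_{\le n}\ind[K]_M a$, apply Lemma~\ref{weirdextension} with ``$b$'' there equal to our $b_{\le n}$ and ``$c$'' there equal to $b$ (using $b\ind[K]_M a$): this yields $b_{n+1}\equiv_{Ma} b$ with $b_{\le n}b_{n+1}\ind[K]_M a$ and $b_{\le n}\ind[a]_{Ma} b_{n+1}$; monotonicity of $\ind[a]$ then gives $b_i\ind[a]_{Ma}b_{n+1}$ for each $i\le n$, and by the inductive hypothesis the full pairwise algebraic independence condition holds.

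Having produced an infinite sequence $(b_i)_{i<\omega}$ with these properties, I would next extract an $Ma$-indiscernible sequence $I = (b_\alpha)_{\alpha<\kappa}$ locally based on $(b_i)_{i<\omega}$. We may arrange $b_0 = b$ by choosing the extraction to fix the first term (or applying an automorphism afterward). Pairwise algebraic independence $b_\alpha\ind[a]_{Ma}b_\beta$ transfers to $I$ because, by strong finite character of $\ind[a]$ (Lemma~\ref{algprops}), algebraic \emph{dependence} over $Ma$ is witnessed by a formula in the two-variable type, and the two-variable $Ma$-type of $(b_\alpha,b_\beta)$ for $\alpha<\beta$ equals that of some $(b_i,b_j)$ with $i<j$ from the original sequence, for which independence holds. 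Finally, $I\ind[K]_M a$: this follows from the claim-style argument, namely that $b_{\le n}\ind[K]_M a$ for all $n$ implies $(b_i)_{i<\omega}\ind[K]_M a$ by finite character of Kim-independence, and this Kim-independence statement is a property of the EM-type of the sequence over $Ma$ (or, more carefully, one shows directly that any finite subtuple of $I$ has the same $Ma$-type as a finite subtuple of the original sequence, hence is Kim-independent from $a$ over $M$, and then applies finite character again).

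The main obstacle I anticipate is the last point: verifying that $I\ind[K]_M a$ genuinely passes through the indiscernible extraction. Kim-independence over a model is not, a priori, simply a property of the EM-type; what is true is that $a\ind[K]_M d$ for a tuple $d$ is controlled by the finitary types $\tp(d'/Ma)$ for finite subtuples $d'$ of $d$ via finite character, and each such finite $Ma$-type is realized within the original sequence $(b_i)_{i<\omega}$, for which we have Kim-independence from $a$ over $M$ together with the requisite permutation-invariance (any finite increasing subtuple of the original sequence is Kim-independent from $a$ over $M$ by monotonicity). So the argument reduces to checking that finite increasing subtuples of $(b_i)_{i<\omega}$ are Kim-independent from $a$ over $M$, which is immediate from $b_{\le n}\ind[K]_M a$ and monotonicity, and that the extracted sequence's finite subtuples realize such types. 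A secondary technical point is ensuring $b_0 = b$ after extraction, which is handled routinely by composing with an $Ma$-automorphism.
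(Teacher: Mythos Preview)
Your proposal is correct and follows essentially the same approach as the paper: iterate Lemma~\ref{weirdextension} (with ``$b$'' there taken to be the sequence built so far and ``$c$'' taken to be $b$) to produce an $\omega$-sequence satisfying $c_i\equiv_{Ma} b$, $c_{\le i}\ind[K]_M a$, and $c_i\ind[a]_{Ma} c_{<i}$, then extract an $Ma$-indiscernible sequence of length $\kappa$ locally based on it. Your careful discussion of why $I\ind[K]_M a$ survives the extraction is exactly what the paper compresses into the phrase ``by condition~(3) and the strong finite character of Kim-dividing.''
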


\begin{proof}
We first build a sequence $(c_i)_{i < \omega}$ by induction, such that for all $i< \omega$, the following conditions hold:
\begin{enumerate}
\item $c_{i} \equiv_{Ma} b$.
\item $c_{i} \ind[a]_{Ma} c_{<i}$.
\item $c_{\leq i} \ind[K]_{M} a$.  
\end{enumerate}
Set $c_{0} = b$, and given $c_{\leq i}$, as $c_{\leq i} \ind[K]_{M} a$ and $b\ind[K]_M a$, we may apply Lemma \ref{weirdextension} to find $c_{i+1} \equiv_{Ma} b$ such that $c_{i+1} \ind[a]_{Ma} c_{\leq i}$ and $c_{\leq i+1} \ind[K]_{M} a$.

Now let $I = (b_\alpha)_{\alpha < \kappa}$ be an $Ma$-indiscernible sequence locally based on $(c_i)_{i<\omega}$. By condition (1), we may assume that $b_0 = b$. Condition (2) implies that $c_i \ind[a]_{Ma} c_j$ for all $i\neq j$, so $b_\alpha\ind[a]_{Ma} b_\beta$ for all $\alpha \neq \beta$, and $I\ind[K]_M a$ by condition (3) and the strong finite character of Kim-dividing.
\end{proof}

\begin{thm}\label{thm:algreaschain}
If $T$ is $\NSOP{1}$, then $\ind[K]$ satisfies the algebraically reasonable chain condition.
\end{thm}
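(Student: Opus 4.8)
The plan is to mimic the standard proof of the chain condition for Kim-independence, but to replace the ordinary extension step with algebraically reasonable extension, so that the algebraic independence of the $b_i$ over $Ma'$ comes out automatically. Concretely, suppose $a\ind[K]_M b$ and $I = (b_i)_{i<\omega}$ is a Morley sequence over $M$ in a global $M$-invariant type $q\supseteq \tp(b/M)$. First I would recall (as in the proof of the ordinary chain condition in \cite{KRKim}) that it suffices to produce \emph{some} $a'\equiv_{Mb}a$ with $a'\ind[K]_M I$ and $I$ being $Ma'$-indiscernible, together with the extra clause $b_i\ind[a]_{Ma'} b_j$ for $i\neq j$; the point is that these are exactly the conclusions we want, read off the sequence $I$ after re-indexing.

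The key move is to reverse the roles via symmetry and apply Corollary~\ref{goodseq}. Since $a\ind[K]_M b$, by symmetry $b\ind[K]_M a$, so Corollary~\ref{goodseq} (applied with the roles of $a$ and $b$ interchanged, and with $\kappa$ large) produces an $Ma$-indiscernible sequence $J = (b'_\alpha)_{\alpha<\kappa}$ with $b'_0 = b$, such that $b'_\alpha\ind[a]_{Ma} b'_\beta$ for all $\alpha\neq\beta$, and $J\ind[K]_M a$. This sequence has the desired ``algebraically reasonable'' feature built in, but it is indiscernible over $Ma$ rather than being a Morley sequence in the prescribed invariant type $q$. So the remaining task is to transfer this good behavior onto a genuine Morley sequence in $q$. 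For this I would use the usual Ramsey/compactness argument: since $J$ is $Ma$-indiscernible with $J\ind[K]_M a$, and since Morley sequences in $q$ over $M$ are themselves $M$-indiscernible, one extracts from $J$ (or builds by a back-and-forth / local-basing argument) a sequence $I' = (b''_i)_{i<\omega}$ which has the EM-type over $M$ of the given Morley sequence $I$ in $q$, while still satisfying $b''_i\ind[a]_{Ma} b''_j$ for $i\neq j$ and $I'\ind[K]_M a$ with $I'$ being $Ma$-indiscernible. Then an automorphism taking $I'$ to $I$ over $M$ moves $a$ to the required $a'$; by symmetry $a'\ind[K]_M I$, the sequence $I$ is $Ma'$-indiscernible, $a'\equiv_{Mb}a$ (after arranging $b''_0=b$), and $b_i\ind[a]_{Ma'} b_j$ for all $i\neq j$.

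The main obstacle is the transfer step: arranging that the sequence produced by Corollary~\ref{goodseq}, which is only $Ma$-indiscernible, can be replaced by one that additionally realizes the Morley-sequence-in-$q$ pattern over $M$ without destroying either $I'\ind[K]_M a$ or the pairwise algebraic independence over $Ma$. The natural way to handle this is to work with the \emph{invariant} type $q$ directly: build the new sequence element by element, at each stage using algebraically reasonable extension (as in Lemma~\ref{weirdextension} and Corollary~\ref{goodseq}) to keep Kim-independence from $a$ and pairwise algebraic independence over $Ma$, while simultaneously using the invariance of $q$ and an indiscernibility/Ramsey argument to force the finite types to agree with $q^{\otimes n}|_M$. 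Once the finitary consistency is in place, compactness and a final extraction give $I'$, and the automorphism argument finishes. Care is needed in the bookkeeping of which algebraic closures must be kept disjoint over $\acl(Ma)$, exactly as in the proofs of Lemma~\ref{weirdextension} and Corollary~\ref{goodseq}, but no genuinely new idea beyond those proofs should be required.
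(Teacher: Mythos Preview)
Your overall architecture matches the paper's: reduce to building, for each $n$, a tuple $(c_0,\dots,c_n)\models q^{\otimes(n+1)}|_M$ with $c_i\equiv_{Ma} b$, pairwise $c_i\ind[a]_{Ma} c_j$, and $(c_i)_{i\leq n}\ind[K]_M a$; then compactness, extract $Ma$-indiscernibility, and move onto $I$ by an automorphism. You also correctly locate the obstacle: at the inductive step one must preserve \emph{both} the $q^{\otimes n}|_M$-pattern and the type over $Ma$, together with the algebraic independence over $Ma$.

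Where your sketch has a gap is in how that reconciliation is achieved. Lemma~\ref{weirdextension} and Corollary~\ref{goodseq} only let you adjoin a new element while controlling its type over $Ma$; they say nothing about its type over $Mc_{\leq n}$, so by themselves they cannot force the new tuple to realize $q^{\otimes(n+2)}|_M$. Conversely, simply realizing $q|_{Mac_{\leq n}}$ gives the Morley-sequence pattern but loses the type over $Ma$. ``Invariance of $q$ and a Ramsey argument'' does not by itself bridge these two requirements; what is needed is an amalgamation over $M$, i.e.\ an application of the independence theorem, and you do not invoke it.

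The paper's inductive step resolves this by changing the variable to which the independence theorem is applied. Given $(c_0,\dots,c_n)$, one first uses Corollary~\ref{goodseq} to produce a \emph{long} $Ma$-indiscernible sequence $J=(d_{0,\alpha})_{\alpha<\kappa^+}$ with $J\ind[K]_M a$ and pairwise algebraic independence over $Ma$, then realizes $q^{\otimes(n+1)}$ over $MJ$ to get $(d_1,\dots,d_{n+1})$ so that $(d_{0,\alpha},d_1,\dots,d_{n+1})\models q^{\otimes(n+2)}|_M$ for every $\alpha$. Moving $(c_0,\dots,c_n)$ onto $(d_1,\dots,d_{n+1})$ by $\sigma$ and setting $a'=\sigma(a)$, one applies the (ordinary) independence theorem to $a$ and $a'$ against $J$ and $(d_1,\dots,d_{n+1})$, obtaining $a''$ with $a''\equiv_{MJ}a$ and $a''\equiv_{Md_1\dots d_{n+1}}a'$ and $a''\ind[K]_M Jd_1\dots d_{n+1}$. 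The pairwise $\ind[a]_{Ma''}$ among $d_1,\dots,d_{n+1}$ is inherited from $a'$; the missing instances $d_{0,\alpha}\ind[a]_{Ma''} d_j$ are then obtained by a pigeonhole over $\alpha<\kappa^+$, using that the sets $\acl(Ma''d_{0,\alpha})$ are pairwise disjoint over $\acl(Ma'')$. So the ``genuinely new'' move beyond Lemma~\ref{weirdextension}/Corollary~\ref{goodseq} is precisely this: amalgamate on the $a$-side via the independence theorem, and recover the remaining algebraic independence by pigeonhole over the long sequence. Once you insert this step, your outline becomes the paper's proof.
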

\begin{proof}
Suppose $a\ind[K]_M b$, and let $I = (b_i)_{i\in \omega}$ be a Morley sequence over $M$ in a global $M$-invariant type $q\supseteq \tp(b/M)$. 

\textbf{Claim:} For all $n$, there exists $(c_0,\dots,c_n) \models q^{\otimes (n+1)}|_M$ such that $c_i\equiv_{Ma} b$ for all $i\leq n$, $c_i\ind[a]_{Ma} c_j$ for all $i\neq j$, and $(c_i)_{i\leq n} \ind[K]_M a$.

\emph{Proof of claim:} By induction on $n$. When $n= 0$, taking $c_0 = b$ suffices. So suppose we are given the tuple $(c_0,\dots,c_n)$ by induction. Let $\kappa = |\acl(Mab)|$, and, applying Corollary~\ref{goodseq}, let $J = (d_{0,\alpha})_{\alpha<\kappa^+}$ be an $Ma$-indiscernible sequence with $d_{0,0} = b$, such that $d_{0,\alpha}\ind[a]_{Ma} d_{0,\beta}$ for all $\alpha \neq \beta$, and $J \ind[K]_M a$.

Let $(d_1,\dots,d_{n+1})$ realize $q^{\otimes(n+1)}|_{MJ}$. Since $d_{0,\alpha}\models q|_M$ for all $\alpha$, we have $(d_{0,\alpha},d_1,\dots,d_{n+1})\models q^{\otimes(n+2)}|_M$ for all $\alpha$. Let $\sigma\in \Aut(\mathbb{M}/M)$ be such that $\sigma(c_i) = d_{i+1}$ for all $i\leq n$, and let $a' = \sigma(a)$. Now $a\equiv_M a'$, $a\ind[K]_M J$ (by choice of $J$), $a'\ind[K]_M (d_1,\dots,d_{n+1})$ (by invariance), and $(d_1,\dots,d_{n+1})\ind[K]_M J$ (since $\tp(d_1,\dots,d_{n+1}/MJ)$ extends to a global $M$-invariant type). 

Applying the independence theorem, we find $a''$ with $a''\equiv_{MJ} a$, $a''\equiv_{Md_1\dots d_{n+1}} a'$, and $a''\ind[K]_M Jd_1\dots d_{n+1}$. Then we still have $d_i \ind[a]_{Ma''} d_j$ for all $1\leq i < j \leq n+1$, and since the sets $\acl(Ma''d_{0,\alpha})$ are pairwise disjoint over $\acl(Ma'')$, and $|\acl(Ma''d_i)| = \kappa$ for all $1\leq i\leq n$, there is some $\alpha <\kappa^+$ such that $\acl(Ma''d_{0,\alpha})$ is disjoint from each of these $n$ sets over $\acl(Ma'')$. So setting $d_0 = d_{0,\alpha}$, we have $d_i \ind[a]_{Ma''} d_j$ for all $i\neq j$.

It remains to move $a''$ back to $a$ by an automorphism $\sigma\in \Aut(\mathbb{M}/M)$. The tuple $\sigma(d_0,\dots,d_{n+1})$ has the desired properties. \qed

By compactness, we can find $I' = (c_i)_{i<\omega}\models q^{\otimes\omega}$, i.e.\ a $q$-Morley sequence over $M$, such that $c_i\equiv_{Ma} b$ for all $i<\omega$, $c_i\ind[a]_{Ma} c_j$ for all $i\neq j$, and $I' \ind[K]_M a$. In fact, we can assume that $I'$ is $Ma$-indiscernible, by replacing it with an $Ma$-indiscernible sequence locally based on it. As $I'$ and $I$ are both $q$-Morley sequences over $M$, we can move $I'$ to $I$ by an automorphism $\sigma\in \Aut(\mathbb{M}/M)$, and take $a' = \sigma(a)$.
\end{proof}

\begin{lem}\label{lem:seq}
Suppose $(a_i)_{i<\omega}$ is a Morley sequence for a global $M$-invariant type, which is moreover $Mb$-indiscernible. If $b\ind[a]_{M(a_i)_{i<\omega}}b'$, then $b\ind[a]_{Ma_0}b'$. 
\end{lem}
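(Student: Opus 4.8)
The plan is to translate both sides of the statement into facts about algebraic closures and then run a counting argument powered by the $Mb$-indiscernibility of the sequence. Write $I=(a_i)_{i<\omega}$. To prove $b\ind[a]_{Ma_0}b'$ it suffices to take a single element $d\in\acl(Ma_0b)\cap\acl(Ma_0b')$ and show $d\in\acl(Ma_0)$. Since $\acl(Ma_0b)\subseteq\acl(MIb)$ and $\acl(Ma_0b')\subseteq\acl(MIb')$, the hypothesis $b\ind[a]_{MI}b'$ gives $d\in\acl(MI)$, so by finite character of algebraic closure $d\in\acl(Ma_0a_1\cdots a_n)$ for some $n<\omega$. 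Thus the statement reduces to the claim that $\acl(Ma_0b)\cap\acl(Ma_0a_1\cdots a_n)=\acl(Ma_0)$.

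I would argue by contradiction, assuming $d\notin\acl(Ma_0)$, and fix an $L(M)$-formula $\theta(x;a_0,b)$ with $\models\theta(d;a_0,b)$ and exactly $k<\omega$ solutions. For each $m\leq k$, the tuples $(a_0,a_1,\dots,a_n)$ and $(a_0,a_{mn+1},\dots,a_{mn+n})$ are increasing tuples drawn from the $Mb$-indiscernible sequence $I$, so they have the same type over $Mb$; pick $\tau_m\in\Aut(\mathbb{M}/Mb)$ carrying the former to the latter. Since $\tau_m$ also fixes $a_0$, it fixes $\acl(Ma_0b)$ setwise, so $d_m:=\tau_m(d)$ lies in $\theta(\mathbb{M};a_0,b)$; and since $\tau_m$ fixes $M$, we get $d_m\in\acl(Ma_0a_{mn+1}\cdots a_{mn+n})$ and $d_m\notin\acl(Ma_0)$.

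The key step — and the place where ``Morley sequence in an invariant type'' is genuinely needed rather than just indiscernibility — is that the distinct blocks $(a_{mn+1},\dots,a_{mn+n})$ are algebraically independent over $Ma_0$: for $m<m'$ the type $\tp(a_{m'n+1}\cdots a_{m'n+n}/Ma_0a_1\cdots a_{m'n})$ is realized by a segment of a Morley sequence in a global $M$-invariant type, hence does not fork over $M$, so by base monotonicity it does not fork over $Ma_0$; by Remark~\ref{rem:basemon} (forking independence strengthens algebraic independence) together with monotonicity and $(m+1)n\leq m'n$, this yields $\acl(Ma_0a_{mn+1}\cdots a_{mn+n})\cap\acl(Ma_0a_{m'n+1}\cdots a_{m'n+n})=\acl(Ma_0)$. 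Consequently, if $d_m=d_{m'}$ for some $m\neq m'$ then $d_m\in\acl(Ma_0)$, whence $d=\tau_m^{-1}(d_m)\in\acl(Ma_0)$ since $\tau_m$ fixes $Ma_0$, a contradiction; so $d_0,\dots,d_k$ are $k+1$ distinct elements of the $k$-element set $\theta(\mathbb{M};a_0,b)$, the desired contradiction, and therefore $d\in\acl(Ma_0)$. I expect the only fiddly point to be the index bookkeeping, ensuring the blocks are pairwise disjoint and occur strictly after position $0$ so that $Ma_0$ can serve as the base everywhere; the appeals to forking and invariant types are entirely standard.
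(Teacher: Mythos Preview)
Your proof is correct and follows the same outline as the paper's: reduce to showing $\acl(Ma_0b)\cap\acl(Ma_0a_1\cdots a_n)\subseteq\acl(Ma_0)$, then use that successive blocks of the Morley sequence are algebraically independent over $Ma_0$ (via invariance/non-forking, exactly as you do).

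The one difference is in how you place $d$ into two disjoint block-closures. The paper observes that $(a_i)_{i\geq 1}$ is $Ma_0b$-indiscernible, hence $\acl(Ma_0b)$-indiscernible, hence $Ma_0d$-indiscernible; so the \emph{same} element $d$ already lies in $\acl(Ma_0a_1\cdots a_n)\cap\acl(Ma_0a_{n+1}\cdots a_{2n})$, and a single application of block independence finishes. You instead push $d$ around by automorphisms $\tau_m\in\Aut(\mathbb{M}/Ma_0b)$, obtaining $k+1$ (a priori distinct) images $d_m$ living in $k+1$ pairwise independent block-closures but all inside the $k$-element set $\theta(\mathbb{M};a_0,b)$, and conclude by pigeonhole. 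This is a perfectly valid workaround that avoids invoking the standard fact ``$A$-indiscernible $\Rightarrow$ $\acl(A)$-indiscernible''; the cost is the extra bookkeeping with $k+1$ blocks rather than two. Either way, the substantive idea is identical.
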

\begin{proof}
Suppose there is some element $c\in \acl(Ma_0b)\cap \acl(Ma_0b')$. We would like to show that $c\in \acl(Ma_0)$. What we have is that $c\in \acl(M(a_i)_{i<\omega})$, and in particular $c\in \acl(Ma_0b) \cap \acl(Ma_0a_{i_1}\dots a_{i_n})$ for some $0 < i_1 < \dots < i_n$.  Now $(a_i)_{i\geq 1}$ is indiscernible over $Ma_0b$, hence indiscernible over $\acl(Ma_0b)$, which contains $c$. So $c$ is also in both $\acl(Ma_0a_1\dots a_n)$ and $\acl(Ma_0 a_{n+1}\dots a_{2n})$. 

But $\tp(a_{n+1},\dots,a_{2n}/Ma_0a_1\dots a_n)$ extends to a global $Ma_0$-invariant type, so we must have $a_{n+1}\dots a_{2n} \ind[a]_{Ma_0} a_1\dots a_n.$
Hence $c\in \acl(Ma_0)$.
\end{proof}

\begin{lem}\label{lem:mutind}
Given $a\ind[K]_M b$ and global $M$-invariant types $p(x)$ and $q(y)$ extending $\tp(a/M)$ and $\tp(b/M)$ respectively, there exist mutually indiscernible Morley sequences $(a_i)_{i<\omega}$ and $(b_i)_{i<\omega}$ in $p$ and $q$,  with $a_0 = a$ and $b_0 = b$, such that $(a_i)_{i<\omega}\ind[K]_M (b_i)_{i<\omega}$, and $a_i\ind[a]_{Mb} a_j$ and $b_i\ind[a]_{Ma} b_j$ for all $i\neq j$. 
\end{lem}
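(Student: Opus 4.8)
The plan is to build the two sequences by two applications of the algebraically reasonable chain condition (Theorem~\ref{thm:algreaschain}) together with symmetry, then to upgrade the result to a genuinely mutually indiscernible pair, and finally to conjugate by an automorphism so that the sequences begin with $a$ and $b$; the algebraic independence clauses will be supplied by Lemma~\ref{lem:seq}. Throughout I use that a sequence realizing $q^{\otimes\omega}|_M$ is automatically a Morley sequence in $q$ over $M$, so that ``being a Morley sequence in $p$ or $q$'' is preserved under automorphisms of $\mathbb{M}$ fixing $M$.

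First I would fix a Morley sequence $(b_i)_{i<\omega}$ in $q$ over $M$ with $b_0 = b$ and apply Theorem~\ref{thm:algreaschain} to $a\ind[K]_M b$; composing the output with an automorphism fixing $Mb$, I may assume $(b_i)_{i<\omega}$ is $Ma$-indiscernible with $a\ind[K]_M(b_i)_{i<\omega}$ and $b_i\ind[a]_{Ma}b_j$ for $i\neq j$. By symmetry $(b_i)_i\ind[K]_M a$, and I apply Theorem~\ref{thm:algreaschain} again, this time to $(b_i)_i\ind[K]_M a$ along a Morley sequence $(a_i)_{i<\omega}$ in $p$ over $M$ with $a_0 = a$; composing with an automorphism fixing $Ma$, I may assume (keeping the name $(b_j)_j$) that $(a_i)_i$ is indiscernible over $M(b_j)_j$, that $(a_i)_i\ind[K]_M(b_j)_j$, and that $a_i\ind[a]_{M(b_j)_j}a_j$ for $i\neq j$, while still $a_0 = a$, $b_0\equiv_{Ma}b$, $(b_j)_j$ is $Ma$-indiscernible, and $b_i\ind[a]_{Ma}b_j$ for $i\neq j$. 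A key observation here: since $(a_i)_i$ is indiscernible over $M(b_j)_j$, for each $i$ there is an automorphism fixing $M(b_j)_j$ and sending $a_0$ to $a_i$; applying it to the $Ma_0$-indiscernible sequence $(b_j)_j$ shows that $(b_j)_j$ is indiscernible over $Ma_i$ for \emph{every} $i$.

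Next I would apply Lemma~\ref{lem:seq} to the Morley sequence $(b_j)_j$ (which is a Morley sequence in the global $M$-invariant type $q$ and is $Ma_i$-indiscernible for each $i$): from $a_i\ind[a]_{M(b_j)_j}a_j$ it yields $a_i\ind[a]_{Mb_0}a_j$ for all $i\neq j$. So now both algebraic conditions sit over one-element bases ($Mb_0$ and $Ma_0 = Ma$), $(a_i)_i$ is indiscernible over $M(b_j)_j$, $(b_j)_j$ is indiscernible over each $Ma_i$, $(a_i)_i\ind[K]_M(b_j)_j$, and $\tp(a_0 b_0/M) = \tp(ab/M)$ (since $a_0 = a$ and $b_0\equiv_{Ma}b$). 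It then remains to upgrade ``$(b_j)_j$ indiscernible over each $Ma_i$'' to ``$(b_j)_j$ indiscernible over $M(a_i)_i$'', i.e.\ to pass to a genuinely mutually indiscernible pair, without losing the other properties; a final automorphism fixing $M$ that carries $(a_0,b_0)$ to $(a,b)$ then finishes. Being a Morley sequence in $p$ or $q$ survives this upgrade, and so does $(a_i)_i\ind[K]_M(b_j)_j$ (by finite character and $\Aut(\mathbb{M})$-invariance of $\ind[K]$, since the finite subconfigurations of the new pair are conjugate over $M$ to finite subconfigurations of the old one).

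The main obstacle is this last upgrade, and specifically making it compatible with the algebraic clauses. Algebraic independence is not base-monotone, so ``$a_i\ind[a]_{M(b_j)_j}a_j$'' is not determined by its finite restrictions and need not survive replacing the $b$-sequence; the point of pushing it down to $Mb_0$ via Lemma~\ref{lem:seq} above is to have a condition over a single element, but even this interacts subtly with the extraction, since the replacement alters $b_0$ too. I expect the resolution to require either arranging the mutually indiscernible extraction to control the types over $M$ of the short configurations $(a^*_i,a^*_j,b^*_0)$ and $(a^*_i,b^*_i,b^*_j)$, re-invoking Lemma~\ref{lem:seq} on the shifted (extracted) Morley sequences and using heavily that $(b_j)_j$ has been made indiscernible over each $Ma_i$; or else replacing the black-box extraction by a direct $\omega$-step iteration of the chain-condition argument and a compactness limit. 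Getting this bookkeeping to deliver mutual indiscernibility, both algebraic clauses, $\ind[K]$, and the base points $a_0 = a$, $b_0 = b$ all at once is where the real work lies.
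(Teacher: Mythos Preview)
Your plan is essentially the paper's own proof, with the roles of $a$ and $b$ swapped. The two applications of the algebraically reasonable chain condition, the observation that $(b_j)_j$ becomes $Ma_i$-indiscernible for every $i$, and the intention to finish with a single extraction plus an automorphism over $M$---all of this matches the paper exactly.

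The difficulty you isolate is real, but you have the bookkeeping in the wrong order, and this is what is blocking you. You push the condition $a_i\ind[a]_{M(b_j)_j}a_j$ down to $a_i\ind[a]_{Mb_0}a_j$ via Lemma~\ref{lem:seq} \emph{before} extracting, and then worry (correctly) that $b_0$ moves under the extraction. The paper instead \emph{defers} Lemma~\ref{lem:seq} to the very last step: it extracts first (in your notation, replace $(b_j)_j$ by an $M(a_i)_i$-indiscernible sequence locally based on it), and carries the condition over the full sequence through the extraction. The point is that the two algebraic clauses behave differently under this extraction. The clause $b_i\ind[a]_{Ma}b_j$ has base $Ma=Ma_0$ contained in the extraction parameter set $M(a_i)_i$, so it transfers immediately to the new pair $b_i'',b_j''$ by local basing. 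The clause $a_i\ind[a]_{M(b_j)_j}a_j$ has a base that changes, but one checks directly that it becomes $a_i\ind[a]_{M(b_j'')_j}a_j$: any witness $c\in\acl(M\bar b''a_i)\cap\acl(M\bar b''a_j)$ for a finite $\bar b''$ is conjugated (over $M(a_i)_i$, hence fixing $a_i,a_j$) to a witness over a finite $\bar b$ from the old sequence, which lands in $\acl(M(b_k)_k)$ by hypothesis; then the argument of Lemma~\ref{lem:seq} (using that $(b_k)_k$ is a Morley sequence and is $Ma_i$-indiscernible) shows it already lies in $\acl(M\bar b)$, and conjugating back gives $c\in\acl(M\bar b'')$. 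Only after this does the paper invoke Lemma~\ref{lem:seq} once, on the extracted sequence, to reduce $a_i\ind[a]_{M(b_j'')_j}a_j$ to $a_i\ind[a]_{Mb_0''}a_j$. A final automorphism over $M$ sending $(a_0,b_0'')$ to $(a,b)$ finishes.

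So: swap the order of your last two steps, and carry the ``whole-sequence'' algebraic condition through the extraction rather than the ``single-element'' one.
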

\begin{proof}
Let $(a_i)_{i<\omega}$ be a Morley sequence in $p$, with $a_0 = a$. By the algebraically reasonable chain condition, there is some $b'\equiv_{Ma} b$, such that $(a_i)_{i<\omega}$ is $Mb'$-indiscernible, $b'\ind[K]_{M} (a_i)_{i<\omega}$, and $a_i\ind[a]_{Mb'} a_j$ for all $i\neq j$. At the expense of moving $(a_i)_{i<\omega}$ by an automorphism fixing $Ma$, we may assume that $b' = b$. 

Now let $(b_i)_{i<\omega}$ be a Morley sequence in $q$, with $b_0 = b$. Since $(a_i)_{i<\omega}\ind[K]_M b$, we can find some $(a_i')_{i<\omega} \equiv_{Mb} (a_i)_{i<\omega}$, such that $(b_i)_{i<\omega}$ is $M(a_i')_{i<\omega}$-indiscernible, $(a_i')_{i<\omega} \ind[K]_M (b_i)_{i<\omega}$, and $b_i\ind[a]_{M(a_i')_{i<\omega}} b_j$ for all $i\neq j$. Further, we may replace $(a_i')_{i<\omega}$ with an $M(b_i)_{i<\omega}$-indiscernible sequence $(a_i'')_{i<\omega}$ locally based on it, and at the expense of moving $(b_i)_{i<\omega}$ by an automorphism fixing $Mb$, we may assume that $(a_i'')_{i<\omega} = (a_i)_{i<\omega}$.  

The result is that $(a_i)_{i<\omega}$ and $(b_i)_{i<\omega}$ are mutually indiscernible Morley sequences in $p$ and $q$ with $a_0 = a$ and $b_0 = b$ and $(a_i)_{i<\omega}\ind[K]_M (b_i)_{i<\omega}$. We have also ensured that $a_i\ind[a]_{Mb} a_j$ for all $i\neq j$ and $b_i\ind[a]_{M(a_i)_{i<\omega}} b_j$ for all $i\neq j$. By Lemma~\ref{lem:seq}, also $b_i\ind[a]_{Ma} b_j$ for all $i\neq j$. 
\end{proof}

\begin{thm}\label{thm:algreasind}
If $T$ is $\NSOP{1}$, then $\ind[K]$ satisfies the algebraically reasonable independence theorem.
\end{thm}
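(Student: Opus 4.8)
The plan is to get the three $\ind[K]$-clauses from the strengthened independence theorem (Theorem~\ref{strongit}) and to secure the three $\ind[a]$-clauses by a spreading-out argument in the style of the proofs of Theorems~\ref{thm:algreasext} and~\ref{thm:algreaschain}. First I would apply Theorem~\ref{strongit} to obtain $a_1$ with $a_1\equiv_{Mb}a$, $a_1\equiv_{Mc}a'$, $a_1\ind[K]_M bc$, $b\ind[K]_M a_1c$, and $c\ind[K]_M a_1b$; then, replacing $b$ and $c$ by enumerations of $\acl(Mb)$ and $\acl(Mc)$ --- which changes none of the Kim-independence statements by \cite[Corollary 5.17]{KRKim}, and none of the three $\ind[a]$-statements at all --- I may assume $b$ and $c$ are algebraically closed over $M$. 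It then remains to upgrade $a_1$ to some $a''$ having the same type over $Mb$ and over $Mc$, the same three Kim-independence properties, and in addition $a''\ind[a]_{Mb}c$, $a''\ind[a]_{Mc}b$, and $b\ind[a]_{Ma''}c$.

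For the upgrade I would fix a cardinal $\kappa$ bounding $|\acl(Ma_1b)|$ and $|\acl(Ma_1c)|$, fix global $M$-invariant types over $b$ and over $c$, and use Lemma~\ref{lem:mutind} applied to $b\ind[K]_M c$ to produce mutually indiscernible Morley sequences $I=(b_i)$ and $J=(c_i)$ (with $b_0=b$, $c_0=c$, $I\ind[K]_M J$, $b_i\ind[a]_{Mc}b_j$, $c_i\ind[a]_{Mb}c_j$), lengthened to $\kappa^{+}$ terms. Applying the algebraically reasonable chain condition (Theorem~\ref{thm:algreaschain}) to $a_1\ind[K]_M b_0$ with the sequence $I$ --- and likewise to $a_1\ind[K]_M c_0$ with $J$ --- produces copies of $a_1$ over which $I$ (resp.\ $J$) is indiscernible and carries the decoration $b_i\ind[a]_{M(\cdot)}b_j$ (resp.\ $c_i\ind[a]_{M(\cdot)}c_j$); then the independence theorem, applied with $I$ and $J$ in the roles of $b$ and $c$ (legitimate because $I\ind[K]_M J$ and the two copies of $a_1$ are $M$-equivalent), glues these to a single $a''$ with $a''\equiv_{Mb}a_1$, $a''\equiv_{Mc}a_1$, $a''\ind[K]_M IJ$, and with $I$, $J$ indiscernible over $Ma''$ and satisfying $b_i\ind[a]_{Ma''}b_j$, $c_i\ind[a]_{Ma''}c_j$. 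Monotonicity yields $a''\ind[K]_M bc$, and --- arranging the indices of $I$ and $J$ and passing to tails exactly as in the proof of Theorem~\ref{strongit} --- one recovers $b\ind[K]_M a''c$ and $c\ind[K]_M a''b$.

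The algebraic-independence clauses then follow by cardinality counts of the kind used in Theorems~\ref{thm:algreasext} and~\ref{thm:algreaschain}: from $a''\ind[K]_M bc_i$ for all $i$, the pairwise disjointness of the $\acl(Mbc_i)$ over $\acl(Mb)$, and $|\acl(Ma''b)|\le\kappa<\kappa^{+}$, some $\acl(Mbc_i)$ meets $\acl(Ma''b)$ only in $\acl(Mb)$, and since $J$ is indiscernible over $Ma''b$ one slides $c_i$ back to $c_0=c$ by an automorphism to get $a''\ind[a]_{Mb}c$; symmetrically $a''\ind[a]_{Mc}b$. For the internal clause $b\ind[a]_{Ma''}c$ I would in addition have built $a''$ as the first term of a Morley sequence in a global $M$-invariant type that is indiscernible over $Mb$ and along which $b$ stays algebraically independent from $c$, so that Lemma~\ref{lem:seq} applies. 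The main obstacle will be the bookkeeping: the constraints $a''\equiv_{Mb}a$ and $a''\equiv_{Mc}a'$ are rigid, so one cannot correct an $\acl$-intersection by a naive extension-plus-automorphism without spoiling one of them, and instead every correction must be routed through the ``many disjoint copies $+$ independence theorem $+$ cardinality'' device; organizing all of these into one construction --- in particular keeping enough indiscernibility of $I$ and $J$ over $Ma''$ to run the slide-back arguments, and getting the internal clause $b\ind[a]_{Ma''}c$, which no single-variable cardinality count sees --- is the delicate part, and is exactly what Lemmas~\ref{lem:seq} and~\ref{lem:mutind} are designed for.
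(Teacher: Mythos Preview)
Your overall strategy is close to the paper's, and the key ingredients (Lemma~\ref{lem:mutind}, the chain condition, cardinality counts against pairwise-disjoint algebraic closures) are the right ones. But there is a real gap in the slide-back step. You assert that $J$ is indiscernible over $Ma''b$, but your construction only gives $J$ indiscernible over $Ma''$ (from the chain condition, transferred via $a''\equiv_{MJ}a_1''$) and over $Mb$ (from mutual indiscernibility with $I$); neither of these, nor their conjunction, implies indiscernibility over $Ma''b$. Without it you cannot slide the good $c_i$ back to $c_0$ while fixing both $a''$ and $b$, and if $a''$ moves you may lose $a''\equiv_{Mc}a'$. The symmetric problem arises for $a''\ind[a]_{Mc}b$. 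Your plan for the internal clause $b\ind[a]_{Ma''}c$ via Lemma~\ref{lem:seq} is also underspecified: that lemma needs $b\ind[a]_{M(a_i)_{i<\omega}}c$ with the whole $a$-sequence in the base, and you do not say how you will arrange this.

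The paper avoids these difficulties by never sliding individual coordinates. Instead it stretches the $b$-sequence to length $\kappa^{+}$ and the $c$-sequence to length $\kappa^{++}$, fixes $a_*=a_0''$, and \emph{thins}: first discard every $c_j$ that is bad (meaning $b_i\nind[a]_{Ma_*}c_j$ or $a_*\nind[a]_{Mb_i}c_j$) for \emph{some} $i$ --- at most $\kappa^{+}$ many, so $\kappa^{++}$ survive --- and fix one surviving $c_*$; then discard the at most $\kappa$ many $b_i$ with $a_*\nind[a]_{Mc_*}b_i$ and fix a surviving $b_*$. All three algebraic clauses now hold for $(a_*,b_*,c_*)$, and one moves the \emph{pair} $b_*c_*$ back to $bc$ by a single automorphism over $M$. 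The facts $a_*b_*\equiv_M ab$ and $a_*c_*\equiv_M a'c$ needed for this last step require only that $I$ and $J$ are each indiscernible over $Ma_*$, which your setup does provide. (Two side remarks: the paper also builds an $a$-sequence via three applications of Lemma~\ref{lem:mutind} rather than invoking the chain condition, but this is cosmetic --- only $a_0''$ is ever used; and the extra Kim-independences $b\ind[K]_M a''c$, $c\ind[K]_M a''b$ are not part of the statement, so you need not secure them.)
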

\begin{proof}
We have a model $M$ and tuples $a,a',b,c$, with $a\ind[K]_M b$, $a'\ind[K]_M c$, $b\ind[K]_M c$, and $a\equiv_M a'$. Let $p(x)$, $q(y)$, and $r(z)$ be global $M$-invariant types extending $\tp(a/M) = \tp(a'/M)$, $\tp(b/M)$, and $\tp(c/M)$, respectively.

Apply Lemma~\ref{lem:mutind} to $q(y)$ and $r(z)$, obtaining Morley sequences $(b_i)_{i<\omega}$ and $(c_i)_{i<\omega}$. Then apply it two more times, to $p(x)$ and $q(y)$, obtaining Morley sequences $(a_i)_{i<\omega}$ and $(\widehat{b_i})_{i<\omega}$, and then to $p(x)$ and $r(z)$, obtaining Morley sequences $(a_i')_{i<\omega}$ and $(\widehat{c_i})_{i<\omega}$. At the expense of moving $(a_i)_{i<\omega}$ and $(a_i')_{i<\omega}$ by automorphisms over $M$, we may assume that $(\widehat{b_i})_{i<\omega} = (b_i)_{i<\omega}$ and $(\widehat{c_i})_{i<\omega} = (c_i)_{i<\omega}$. Note that $(a_i)_{i<\omega}$ and $(a_i')_{i<\omega}$ are both $p$-Morley sequences over $M$, so $(a_i)_{i<\omega}\equiv_M (a_i')_{i<\omega}$.  

We now apply the independence theorem to the sequences $(a_i)_{i<\omega}$, $(a_i')_{i<\omega}$, $(b_i)_{i<\omega}$, and $(c_i)_{i<\omega}$, obtaining a sequence $(a_i'')_{i<\omega}$ such that $(a_i'')_{i<\omega}\equiv_{M(b_i)_{i<\omega}} (a_i)_{i<\omega}$, $(a_i'')_{i<\omega}\equiv_{M(c_i)_{i<\omega}} (a_i')_{i<\omega}$, and $(a_i'')_{i<\omega} \ind[K]_M (b_i)_{i<\omega}(c_i)_{i<\omega}$. The sequences $(a_i'')_{i<\omega}$, $(b_i)_{i<\omega}$, and $(c_i)_{i<\omega}$ are pairwise mutually indiscernible over $M$ and have the property that any pair from one sequence is algebraically independent over any element of another sequence. 

Let $\kappa$ be a cardinal larger than the sizes of $M$, the language, and the tuples $a$, $b$, and $c$. We can stretch the $(b_i)$ sequence to have length $\kappa^+$ and stretch the $(c_i)$ sequence to have length $\kappa^{++}$, while maintaining their mutual indiscernibility and algebraic independence properties. 

Fix $a_* = a''_0$. For any $i<\kappa^+$, $|\acl(Ma_*b_i)| \leq \kappa$. Since the sets $\{\acl(Ma_*c_j)\mid j<\kappa^{++}\}$ are pairwise disjoint over $\acl(Ma_*)$, we can remove any $c_j$ such that there exists an $i<\kappa^+$ such that $b_i\nind[a]_{Ma_*} c_j$, and we are still left with a sequence of length $\kappa^{++}$. 

Similarly, since for all $i<\kappa^+$, the sets $\{\acl(Mb_ic_j)\mid j<\kappa^{++}\}$ are pairwise disjoint over $\acl(Mb_i)$, we can further remove any $c_j$ such that there exists an $i$ such that $a\nind[a]_{Mb_i} c_j$, and we are still left with a sequence of length $\kappa^{++}$. Fix a $c_*$ from this sequence.

Finally, since the sets $\{\acl(Mb_ic_*)\mid i<\kappa^+\}$ are pairwise disjoint over $\acl(Mc_*)$, we can remove any $b_i$ such that $a_*\nind[a]_{Mc_*}b_i$, and we are still left with a sequence of length $\kappa^+$. Fix a $b_*$ from this sequence.

It remains to move $b_*c_*$ back to $bc$ by an automorphism $\sigma$ fixing $M$, and set $a'' = \sigma(a_*)$.
\end{proof}

\section{The model companion of the empty theory}\label{sec:emptytheory}

\subsection{The theory $T^\emptyset_L$}

Let $L$ be any language. Then the empty $L$-theory has a model completion, which we call $T^\emptyset_L$. As the theory $T^{\emptyset}_{L}$ may be regarded as the generic expansion of the theory of an infinite set in the empty language, this fact is a special case of Theorem 5 in~\cite{winkler1975model} (see Fact~\ref{winkler} below), and it was reproven by Je\v{r}\'{a}bek in~\cite{jerabek}. We include a proof, following the idea of \cite{winkler1975model}, for completeness and to fix notation.

\begin{defn}
A \emph{partial diagram} $\Delta$ is a set of atomic and negated atomic formulas. $\Delta$ is \emph{flat} if each formula in $\Delta$ has the form $R(\overline{z})$, $\lnot R(\overline{z})$, or $f(\overline{z}) = z'$, where $R$ is a relation symbol, $f$ is a function symbol, $\overline{z}$ is a tuple of variables and $z'$ is a single variable. We view constant symbols as $0$-ary function symbols, so this includes formulas of the form $c = z'$. 
\end{defn}

In a flat diagram, we always intend distinct variables to refer to distinct elements.

\begin{defn}
A flat diagram $\Delta$ is \emph{consistent} if, for each tuple of variables $\overline{z}$, 
\begin{enumerate}
\item At most one of $R(\overline{z})$ and $\lnot R(\overline{z})$ is in $\Delta$, where $R$ is a relation symbol.
\item There is at most one variable $z'$ such that $f(\overline{z}) = z'$ is in $\Delta$, where $f$ is a function symbol.  
\end{enumerate}
\end{defn}

\begin{defn}
A consistent flat diagram $\Delta$ in the variables $\overline{w}$ is \emph{complete} if, for each relation symbol $R$, each function symbol $f$, and each tuple of variables $\overline{z}$ from $\overline{w}$ of the appropriate length,
\begin{enumerate}
\item Either $R(\overline{z})$ or $\lnot R(\overline{z})$ is in $\Delta$. 
\item There is some variable $z'$ in $\overline{w}$ such that $f(\overline{z}) = z'$ is in $\Delta$.
\end{enumerate}
\end{defn}

Let $A$ be any $L$-structure. Then there is a complete flat diagram $\diagf(A)$ in the variables $(w_a)_{a\in A}$, which contains a formula $\psi(w_{a_1},\dots,w_{a_n})$ of one of the allowed forms if and only if $A\models \psi(a_1,\dots,a_n)$. The following easy lemma establishes the converse.

\begin{lem}\label{lem:diagram}
Suppose $\Delta$ is a consistent flat diagram in the nonempty set of variables $(w_a)_{a\in A}$. Then there is an $L$-structure with domain $A$ such that for all $\psi(w_{a_1},\dots,w_{a_n})\in \Delta$, $A\models \psi(a_1\dots,a_n)$. 
\end{lem}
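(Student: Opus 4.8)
The plan is to build the $L$-structure directly on the underlying set $A$, assigning to each variable $w_a$ the element $a$ and reading the interpretations of the symbols of $L$ off of $\Delta$, making arbitrary choices wherever $\Delta$ is silent. Since the variables $(w_a)_{a\in A}$ are already pairwise distinct (indexed by the distinct elements of $A$), the convention that distinct variables denote distinct elements is automatically respected, and the only things to check will be that the interpretations are well-defined and that the resulting structure satisfies every formula of $\Delta$.

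First I would fix an arbitrary element $\ast \in A$; this is possible precisely because $A$ is nonempty, which is the only place that hypothesis is used. Then for each relation symbol $R$ of arity $n$, put $R^A = \{(a_1,\dots,a_n) \in A^n : R(w_{a_1},\dots,w_{a_n}) \in \Delta\}$; and for each function symbol $f$ of arity $n$ and each $\overline{a} = (a_1,\dots,a_n) \in A^n$, set $f^A(\overline{a}) = a'$ if $f(w_{a_1},\dots,w_{a_n}) = w_{a'} \in \Delta$ for some $a'$, and $f^A(\overline{a}) = \ast$ otherwise. With $n = 0$ this also defines the constants: $c^A = a'$ if $c = w_{a'} \in \Delta$, and $c^A = \ast$ otherwise. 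Consistency condition (2) — that there is at most one $z'$ such that $f(\overline{z}) = z'$ lies in $\Delta$ — is exactly what is needed for $f^A$ to be a well-defined function $A^n \to A$ (and for $c^A$ to be a single element).

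Finally I would verify the satisfaction clause by cases on the shape of $\psi \in \Delta$. If $\psi$ is $R(w_{a_1},\dots,w_{a_n})$ or $f(w_{a_1},\dots,w_{a_n}) = w_{a'}$, then $A$ satisfies $\psi$ by the definitions above. If $\psi$ is $\lnot R(w_{a_1},\dots,w_{a_n})$, then consistency condition (1) ensures $R(w_{a_1},\dots,w_{a_n}) \notin \Delta$, hence $(a_1,\dots,a_n) \notin R^A$, so again $A$ satisfies $\psi$. These exhaust the possible forms of a formula in a flat diagram, so the construction is complete. I do not anticipate any genuine obstacle: the statement is pure bookkeeping, and the only subtleties are the appeal to consistency condition (2) for well-definedness of the function interpretations and the appeal to nonemptiness of $A$ to supply the default value $\ast$ for function symbols on tuples not mentioned in $\Delta$.
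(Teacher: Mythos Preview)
Your proof is correct and is essentially the same as the paper's: the paper first explicitly completes $\Delta$ to a complete flat diagram $\Delta'$ (adding $\lnot R(\overline{z})$ and $f(\overline{z}) = w_a$ for a fixed $a$ wherever $\Delta$ is silent) and then reads off the structure from $\Delta'$, whereas you fold the default choices directly into the definitions of $R^A$ and $f^A$. The content, including the appeals to consistency and to nonemptiness of $A$ for the default value, is identical.
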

\begin{proof}
First, we extend $\Delta$ to a complete flat diagram $\Delta'$ as follows: For each $n$-ary relation symbol $R$, and for each $n$-tuple $\overline{z}$ such that neither $R(\overline{z})$ nor $\lnot R(\overline{z})$ is in $\Delta$, add $\lnot R(\overline{z})$ to $\Delta'$. Now fix an arbitrary variable $w_a$. For each $n$-ary function symbol $f$, and for each $n$-tuple $\overline{z}$ such that no formula of the form $f(\overline{z}) = z'$ is in $\Delta$, add $f(\overline{z}) = w_a$ to $\Delta'$. 

We define an $L$-structure with domain $A$, according to $\Delta'$. If $R$ is an $n$-ary relation symbol, we set $R^A = \{(a_1,\dots,a_n)\in A^n\mid R(w_{a_1},\dots,w_{a_n})\in \Delta'\}$.
If $f$ is an $n$-ary function symbol and $(a_1,\dots,a_n)\in A^n$, we set $f^A(a_1,\dots,a_n) = a'$, where $a'$ is the unique element of $A$ such that $f(w_{a_1},\dots,w_{a_n}) = w_{a'}\in \Delta'$. Consistency ensures that this $L$-structure is well-defined and satisfies all the formulas in $\Delta'$ (and hence in $\Delta$).
\end{proof}

For the purposes of axiomatizing the existentially closed $L$-structures, we will be interested in a class of finite partial diagrams, which we call extension diagrams.

\begin{defn}
Let $\overline{w}$ be a finite tuple of variables, partitioned into two subtuples $\overline{x}$ and $\overline{y}$. An \emph{extension diagram} in $(\overline{x},\overline{y})$ is a consistent flat diagram $\Delta$ in the variables $\overline{w}$, such that for each formula $R(\overline{z})$, $\lnot R(\overline{z})$, or $f(\overline{z}) = z'$ in $\Delta$, some variable in $\overline{z}$ is in $\overline{y}$. In particular, no constant symbols appear in extension diagrams.
\end{defn}

A tuple $\overline{a} = (a_i)_{i\in I}$ is \emph{non-redundant} if $a_i\neq a_j$ for all $i\neq j$. Given a finite tuple of variables $\overline{z} = (z_1,\dots,z_n)$, let $\delta(\overline{z})$ be the formula which says that $\overline{z}$ is non-redundant: $$\bigwedge_{1\leq i<j\leq n} z_i\neq z_j.$$ 

Given a finite partial diagram $\Delta$ in the finite tuple of variables $\overline{w}$, let $\varphi_{\Delta}(\overline{w})$ be the conjunction of all the formulas in $\Delta$, together with $\delta(\overline{w})$: $$ \left(\bigwedge_{\psi(\overline{z})\in \Delta}\psi(\overline{z})\right)\land \delta(\overline{w}).$$

\begin{lem}\label{lem:extension}
Let $\Delta$ be an extension diagram in $(\overline{x},\overline{y})$, and let $A$ be an $L$-structure. If $\overline{a}$ is a non-redundant tuple from $A$ of the same length as $\overline{x}$, then there is an $L$-structure $B$ containing $A$ and a tuple $\overline{b}$ from $B$ of the same length as $\overline{y}$ such that $B\models \varphi_\Delta(\overline{a},\overline{b})$. 
\end{lem}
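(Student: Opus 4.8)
The plan is to build $B$ by adjoining to $A$ a fresh set of elements indexed by the variables in $\overline{y}$, and to define the $L$-structure on the enlarged domain using Lemma~\ref{lem:diagram}. Write $\overline{x} = (x_j)_{j\in J}$ and $\overline{y} = (y_k)_{k\in K}$. Set $B = A \sqcup \{b_k \mid k\in K\}$ as a set, where the $b_k$ are new distinct points not in $A$; let $\overline{b} = (b_k)_{k\in K}$. We now want an $L$-structure with domain $B$ that extends $A$ and satisfies $\varphi_\Delta(\overline{a},\overline{b})$.

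First I would assemble a flat diagram $\Delta^*$ in the variables $(w_e)_{e\in B}$ whose models will be exactly such structures. It has three parts. (i) The \emph{old part}: all of $\diagf(A)$, which we may regard as a flat diagram in the variables $(w_a)_{a\in A}$, a subtuple of $(w_e)_{e\in B}$. (ii) The \emph{new part} coming from $\Delta$: for each formula in $\Delta$, substitute $w_{a_j}$ for each occurrence of $x_j$ and $w_{b_k}$ for each occurrence of $y_k$; since $\Delta$ is a diagram in $\overline{x},\overline{y}$ only, this yields formulas among the variables $(w_e)_{e\in B}$. (iii) Nothing else is needed — we just apply Lemma~\ref{lem:diagram} to $\Delta^* = \diagf(A)\cup \Delta[\overline{a},\overline{b}/\overline{x},\overline{y}]$. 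The key point is that $\Delta^*$ is a \emph{consistent} flat diagram: we must check that no relation symbol gets both $R(\overline{z})$ and $\lnot R(\overline{z})$, and that each function-value clause $f(\overline{z}) = z'$ is unique. Because $\Delta$ is an \emph{extension} diagram, every one of its clauses involves at least one variable from $\overline{y}$; after substitution, every clause coming from (ii) involves at least one variable $w_{b_k}$ with $b_k\notin A$, whereas every clause of $\diagf(A)$ involves only variables $w_a$ with $a\in A$. Since the $b_k$ are new and the original tuple $\overline{a}$ is non-redundant (so distinct variables of $\overline{w}$ really do name distinct elements of $B$), the two parts (i) and (ii) involve disjoint tuples $\overline{z}$ of variables, hence cannot conflict with one another; and each of (i), (ii) is internally consistent — (i) because it is a genuine complete diagram $\diagf(A)$, and (ii) because $\Delta$ was assumed consistent and the substitution is injective on the relevant variables (using non-redundancy of $\overline{a}$). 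Therefore $\Delta^*$ is consistent.

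Now Lemma~\ref{lem:diagram} gives an $L$-structure $B$ with domain $B$ (identifying $w_e$ with $e$) satisfying every formula of $\Delta^*$. It remains to observe two things. Since $B$ satisfies all of $\diagf(A)$, and $\diagf(A)$ is a \emph{complete} flat diagram on $A$, the inclusion $A\hookrightarrow B$ is an $L$-embedding: it preserves and reflects each relation on tuples from $A$ and preserves the value of each function on tuples from $A$ — here one uses that $\diagf(A)$ records $f^A(\overline{a}) = a'$ for the genuine value $a'\in A$, and that $B\models f(\overline{a}) = a'$ forces $f^B(\overline{a})$, which a priori might only lie in $B$, to equal $a'\in A$. (So $A$ is a substructure of $B$, as required.) Finally, $B$ satisfies all clauses of $\Delta[\overline{a},\overline{b}/\overline{x},\overline{y}]$, i.e.\ $B\models \psi(\overline{a},\overline{b})$ for each $\psi(\overline{x},\overline{y})\in\Delta$; and $B\models\delta(\overline{a},\overline{b})$ since the entries of $\overline{a}$ are distinct (non-redundancy) and each $b_k$ is a new point distinct from all elements of $A$ and from the other $b_{k'}$. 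Hence $B\models\varphi_\Delta(\overline{a},\overline{b})$, completing the proof.

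The only genuinely delicate point — and the place I would write most carefully — is the consistency check for $\Delta^*$: one must be sure that substituting the non-redundant tuple $\overline{a}$ into $\Delta$ does not accidentally identify two variables that $\Delta$ intended to be distinct, and that the "at least one $\overline{y}$-variable" condition in the definition of extension diagram is exactly what keeps the substituted clauses from colliding with the old diagram $\diagf(A)$ on tuples lying entirely inside $A$. Everything else is bookkeeping with Lemmas~\ref{lem:diagram} and the definitions of consistent/complete flat diagram.
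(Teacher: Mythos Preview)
Your proof is correct and follows essentially the same approach as the paper: form the union $\diagf(A)\cup\Delta[\overline{a},\overline{b}/\overline{x},\overline{y}]$, check its consistency using the defining property of extension diagrams (each clause of $\Delta$ has a $\overline{y}$-variable in its argument tuple, so after substitution no argument tuple from $\Delta$ can coincide with one from $\diagf(A)$), and apply Lemma~\ref{lem:diagram}. The paper's version is terser, but your more detailed consistency check and explicit verification that $A$ is a substructure are exactly the points the paper leaves implicit.
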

\begin{proof}
Consider the flat diagram $\diagf(A)\cup \Delta(x_{a_1},\dots,x_{a_n},y_{b_1},\dots,y_{b_n})$, where we identify the variables $\overline{x}$ in $\Delta$ with the variables in $\diagf(A)$ enumerating $\overline{a}$, and we index the variables $\overline{y}$ in $\Delta$ by a new tuple $\overline{b}$. This diagram is consistent, since $\diag_f(A)$ and $\Delta$ are individually consistent, and for every formula $R(\overline{z})$, $\lnot R(\overline{z})$, or $f(\overline{z}) = z'$ in $\Delta$, some element of the tuple $\overline{z}$ is in $\overline{y}$, while $\diagf(A)$ does not mention the variables in $\overline{y}$. Hence, by Lemma~\ref{lem:diagram}, there is a structure $B$ with domain $A\cup \{b_1,\dots,b_n\}$, such that $B$ satisfies $\diagf(A)$ (so $A$ is a substructure of $B$), and $B\models \varphi_\Delta(\overline{a},\overline{b})$.
\end{proof}

\begin{lem}\label{lem:flattening}
If an $L$-structure $A$ is not existentially closed, then there is a non-redundant tuple $\overline{a}$ from $A$ and an extension diagram $\Delta$ in $(\overline{x},\overline{y})$, such that $A\models \lnot \exists \overline{y}\,\varphi_\Delta(\overline{a}, \overline{y})$.
\end{lem}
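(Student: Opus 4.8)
The plan is to unwind the definition of ``existentially closed'' and then apply Lemma~\ref{lem:extension} to massage an arbitrary witnessing formula into the required shape. Recall that $A$ is existentially closed means: for every $L$-embedding $A \hookrightarrow B$ and every quantifier-free $L$-formula $\theta(\overline{x},\overline{y})$ and tuple $\overline{a}$ from $A$, if $B \models \exists \overline{y}\, \theta(\overline{a},\overline{y})$ then $A \models \exists \overline{y}\,\theta(\overline{a},\overline{y})$. So if $A$ is \emph{not} existentially closed, there is some extension $B \supseteq A$, some quantifier-free $\theta(\overline{x},\overline{y})$, and some tuple $\overline{a}$ from $A$ with $B \models \exists \overline{y}\,\theta(\overline{a},\overline{y})$ but $A \models \lnot \exists \overline{y}\,\theta(\overline{a},\overline{y})$. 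Fix a tuple $\overline{b}$ from $B$ witnessing the existential in $B$.

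The main work is ``flattening'': converting $\theta$ into (the conjunction of formulas in) a consistent flat diagram while only introducing new variables tied to $\overline{y}$. First I would put $\theta$ into disjunctive normal form and pass to a single disjunct, so we may assume $\theta$ is a conjunction of atomic and negated atomic formulas; since $B\models \theta(\overline{a},\overline{b})$, one disjunct is satisfied there. Next, introduce a new variable for each subterm occurring in $\theta$: for a term $f(t_1,\dots,t_n)$ appearing in $\theta$, add a fresh variable $u$ and the flat conjunct $f(u_1,\dots,u_n) = u$ where $u_i$ names $t_i$, working bottom-up so that every atomic conjunct of $\theta$ becomes either $R(\overline{u})$, $\lnot R(\overline{u})$, or a term-naming equation $f(\overline{u}) = u'$, possibly together with variable equalities $u = u'$. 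The fresh variables get their values in $B$ by evaluating the corresponding subterms at $(\overline{a},\overline{b})$; call the resulting extended tuple of ``new'' coordinates $\overline{b}'$ (it extends $\overline{b}$). Handle equalities among variables by quotienting: identify variables forced equal, so after this step distinct variables are genuinely distinct in $B$, i.e. the combined tuple is non-redundant, and we may also drop from consideration any conjunct that has become trivial.

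Now I need to ensure the diagram is an \emph{extension} diagram, i.e. every conjunct mentions some variable among $\overline{y}$ (the non-$\overline{x}$ variables). Here is the one genuine subtlety, and the step I expect to be the main obstacle: some conjuncts of the flattened diagram may involve only $\overline{x}$-variables, and also the fresh subterm-variables may evaluate, in $A$, to elements of $A$ rather than requiring an extension. To deal with this: first, discard any conjunct mentioning only $\overline{x}$-variables — such a conjunct already holds in $A$ (since it holds in $B \supseteq A$ and involves only elements of $A$), so removing it does not change whether $A$ satisfies the existential statement, and it can only make the resulting $\exists\overline{y}\,\varphi_\Delta$ easier to satisfy, contradiction-wise. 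Wait — I need to be careful the other direction: removing conjuncts \emph{weakens} $\varphi_\Delta$, so $A \models \lnot\exists\overline{y}\,\varphi_\Delta$ might fail. The correct fix is instead to \emph{keep} those conjuncts but observe they are among a finite set of quantifier-free facts true in $A$ about the fixed tuple $\overline{a}$; absorbing them is not possible in an extension diagram, so instead one argues: if $A \models \exists \overline{y}\,\varphi_\Delta(\overline{a},\overline{y})$ for the flattened-and-trimmed $\Delta$ (keeping only the $\overline{y}$-involving conjuncts), then by Lemma~\ref{lem:extension}-style reasoning one reconstitutes a model of the original $\exists\overline{y}\,\theta$ over $A$ — because the $\overline{x}$-only conjuncts of the original already hold in $A$, and the $\overline{y}$-involving conjuncts are exactly what $\Delta$ records — contradicting non-existential-closedness. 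Finally, split $\delta$ off: $\varphi_\Delta$ built from an extension diagram automatically includes the non-redundancy clause $\delta(\overline{w})$, which is satisfied by $(\overline{a},\overline{b}')$ by construction, and $\overline{a}$ is non-redundant after the quotient step (relabel to keep the enumeration honest). Assembling: the trimmed flat diagram $\Delta$ in $(\overline{x},\overline{y})$ is an extension diagram, $B \models \varphi_\Delta(\overline{a},\overline{b}')$ so the formula is consistent with the extension, and $A \models \lnot \exists\overline{y}\,\varphi_\Delta(\overline{a},\overline{y})$, which is exactly the conclusion. I would write the $\overline{x}$-only-conjunct bookkeeping carefully, as that is where the argument could slip.
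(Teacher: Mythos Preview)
Your approach is essentially the paper's, but there is one genuine gap in the flattening step. You say that every fresh subterm-variable goes into the $\overline{y}$-tuple (``call the resulting extended tuple of `new' coordinates $\overline{b}'$''). This is wrong. Suppose the subterm $f(a_{i_1},\dots,a_{i_n})$ occurs, with all arguments among the original $\overline{a}$; since $A$ is closed under function symbols, its value $d$ lies in $A$. If $d$ is not already among $\overline{a}$, you introduce a fresh variable $u$ for it, and by your convention $u$ goes into $\overline{y}$. The resulting conjunct $f(x_{i_1},\dots,x_{i_n}) = u$ then has its argument tuple $\overline{z}$ entirely in $\overline{x}$, so it violates the definition of an extension diagram (which requires some variable of $\overline{z}$, not $z'$, to lie in $\overline{y}$). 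Your ``discard $\overline{x}$-only conjuncts'' step does not remove this formula, since $u$ is in $\overline{y}$. The quotient step does not help either, unless $d$ happens to coincide with some existing $a_i$.

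The fix, which the paper does explicitly, is to sort each fresh variable according to where its value lands: put it in $\overline{x}$ (and add $d$ to $\overline{a}$) if $d\in A$, and in $\overline{y}$ otherwise. Once this is done, any conjunct $f(\overline{z}) = z'$ with $\overline{z}$ entirely in $\overline{x}$ automatically has $z'\in\overline{x}$ too, so it is an honest $\overline{x}$-only conjunct and is correctly removed. Your handling of that removal step is also more convoluted than necessary: the removed conjuncts $\psi_j(\overline{a})$ are atomic or negated atomic facts about elements of $A$ that hold in $B$, hence in $A$; so $A\models\lnot\exists\overline{y}\,(\varphi_{\Delta'}(\overline{a},\overline{y})\wedge\bigwedge_j\psi_j(\overline{a}))$ immediately gives $A\models\lnot\exists\overline{y}\,\varphi_{\Delta'}(\overline{a},\overline{y})$, with no appeal to Lemma~\ref{lem:extension}.
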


\begin{proof}
Since $A$ is not existentially closed, there is a quantifier-free $L$-formula $\varphi(\overline{x},\overline{y})$, an $L$-structure $B$ containing $A$, and tuples $\overline{a}\in A$ and $\overline{b}\in B$, such that $B\models \varphi(\overline{a},\overline{b})$, but $A\models \lnot\exists \overline{y}\, \varphi(\overline{a},\overline{y})$. We may assume the the tuples $\overline{a}$ and $\overline{b}$ are non-redundant and that $b_i\in B\setminus A$ for all $i$. Writing $\varphi$ in disjunctive normal form, one of the disjuncts is satisfied by $(\overline{a},\overline{b})$ in $B$, so we may assume that $\varphi$ is a conjunction of atomic and negated atomic formulas. Let $\Delta$ be the finite partial diagram containing these formulas. Then $\varphi_\Delta(\overline{x},\overline{y})$ is equivalent to $\varphi(\overline{x},\overline{y})\land \delta(\overline{x},\overline{y})$, and we have  $B\models\varphi_\Delta(\overline{a},\overline{b})$, but $A\models \lnot\exists \overline{y}\,\varphi_{\Delta}(\overline{a},\overline{y})$. 

We will transform $\Delta$ into an extension diagram. This process will involve adding and deleting variables and making corresponding changes to the tuples $\overline{a}$ and $\overline{b}$, but we will maintain the invariants that $\Delta$ is finite, $A\models \lnot\exists \overline{y}\,\varphi_{\Delta}(\overline{a},\overline{y})$, $B\models\varphi_{\Delta}(\overline{a},\overline{b})$, and $b_i\in B\setminus A$ for all $i$. We write $\overline{w}$ for the tuple $(\overline{x},\overline{y})$ and $\overline{c}$ for $(\overline{a},\overline{b})$. 

First, we flatten $\Delta$. Suppose that there is an $n$-ary function symbol $f$ such that the term $f(w_{i_1},\dots,w_{i_k})$ (where the $w_{i_j}$ are variables) appears in a formula in $\Delta$ which is not of the form $f(w_{i_1},\dots,w_{i_k}) = w'$ for some variable $w'$. Let $d = f^B(c_{i_1},\dots,c_{i_k})$. If $d = c_{i_{k+1}}$ for some $i_{k+1}$, then we simply replace this instance of $f(w_{i_1},\dots,w_{i_k})$ with $w_{i_{k+1}}$ and add the formula $f(w_{i_1},\dots,w_{i_k}) = w_{i_{k+1}}$ to $\Delta$ if it is not already there. If $d$ is not in the tuple $\overline{c}$, we introduce a new variable $w'$ (a new $x$ if $d\in A$ and a new $y$ otherwise), add $d$ to $\overline{c}$ (as a new $a$ if $d\in A$ and a new $b$ otherwise), replace this instance of $f(w_{i_1},\dots,w_{i_k})$ with $w'$, and add the formula $f(w_{i_1},\dots,w_{i_k}) = w'$ to $\Delta$. 

Repeating this procedure, we eventually ensure that every formula in $\Delta$ has the form $w = w'$, $w\neq w'$, $R(w_{i_1},\dots,w_{i_n})$, $\lnot R(w_{i_1},\dots,w_{i_n})$, or $f(w_{i_1},\dots,w_{i_n}) = w'$.

Next we remove the equations and inequations between variables. Since the tuples $\overline{a}$ and $\overline{b}$ are non-redundant, $\Delta$ does not contain any equalities  between distinct variables, and the equalities of the form $w = w$ can of course be removed. Further, we may assume that $\Delta$ does not contain any inequalities $w_i\neq w_j$ between variables either, since these inequalities are all implied by $\delta(\overline{x},\overline{y})$ and hence by $\varphi_\Delta$. The set of formulas $\Delta$ is now a flat diagram. It is consistent, since it is satisfied by the non-redundant tuple $\overline{c}$.

Finally, let $\Delta'$ be the extension diagram obtained by removing from $\Delta$ any formula $R(\overline{z})$, $\lnot R(\overline{z})$, or $f(\overline{z}) = z'$ in which none of the variables in $\overline{z}$ are in $\overline{y}$. Note that in the case of $f(\overline{z}) = z'$, if all of the $\overline{z}$ are in $\overline{x}$, then their interpretations come from $A$, and since $A$ is closed under the function symbols, $z'$ is in $\overline{x}$ as well. 

So $\varphi_{\Delta}(\overline{x},\overline{y})$ is equivalent to $\varphi_{\Delta'}(\overline{x},\overline{y})\land \bigwedge_{j = 1}^N\psi_j(\overline{x})$, where each $\psi_j$ is atomic or negated atomic. But since $$A\models \lnot \exists \overline{y}\, \left(\varphi_{\Delta'}(\overline{a},\overline{y})\land \bigwedge_{j = 1}^N\psi_j(\overline{a})\right),$$ also $A\models \lnot \exists \overline{y}\, \varphi_{\Delta'}(\overline{a},\overline{y})$, as was to be shown.
\end{proof}

Given an extension diagram $\Delta$ in $(\overline{x},\overline{y})$, let $\psi_\Delta$ be the sentence $$\forall \overline{x}\,(\delta(\overline{x})\rightarrow \exists\overline{y}\,\varphi_{\Delta}(\overline{x},\overline{y})),$$ and let $T^\emptyset_L = \{\psi_\Delta\mid \Delta\text{ is an extension diagram in }(\overline{x},\overline{y})\}$.

\begin{thm}
$T^\emptyset_L$ is the model companion of the empty $L$-theory.
\end{thm}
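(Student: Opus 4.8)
The plan is to show that $T^\emptyset_L$ satisfies the three defining properties of a model companion: every model of the empty $L$-theory embeds into a model of $T^\emptyset_L$; every model of $T^\emptyset_L$ embeds into a model of the empty $L$-theory (trivial, since the empty theory has no axioms); and $T^\emptyset_L$ is model complete. Since mutual embeddability with the empty theory is immediate once we know every $L$-structure embeds in a model of $T^\emptyset_L$, the real content is (a) models of $T^\emptyset_L$ are precisely the existentially closed $L$-structures, and (b) this class is model complete.

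First I would show that every model $A$ of $T^\emptyset_L$ is existentially closed. Arguing contrapositively: if $A$ is not existentially closed, then by Lemma~\ref{lem:flattening} there is a non-redundant tuple $\overline{a}$ from $A$ and an extension diagram $\Delta$ in $(\overline{x},\overline{y})$ with $A\models \lnot\exists\overline{y}\,\varphi_\Delta(\overline{a},\overline{y})$; since $\overline{a}$ is non-redundant, $A\models \delta(\overline{a})$, and hence $A$ fails the sentence $\psi_\Delta = \forall\overline{x}\,(\delta(\overline{x})\rightarrow\exists\overline{y}\,\varphi_\Delta(\overline{x},\overline{y})) \in T^\emptyset_L$, a contradiction. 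Conversely, if $A$ is existentially closed, then for every extension diagram $\Delta$ in $(\overline{x},\overline{y})$ and every non-redundant $\overline{a}$ from $A$ with $A\models\delta(\overline{a})$, Lemma~\ref{lem:extension} produces an extension $B\supseteq A$ and a tuple $\overline{b}$ with $B\models\varphi_\Delta(\overline{a},\overline{b})$; since $\varphi_\Delta$ is existential and $A$ is existentially closed, already $A\models\exists\overline{y}\,\varphi_\Delta(\overline{a},\overline{y})$. Thus $A\models\psi_\Delta$, so $A\models T^\emptyset_L$. This identifies $\mathrm{Mod}(T^\emptyset_L)$ with the class of existentially closed $L$-structures.

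Next, every $L$-structure $A$ embeds into an existentially closed one: build an increasing chain $A = A_0\subseteq A_1\subseteq\cdots$ by a standard closing-off argument, where at each stage one realizes all currently unresolved existential statements using Lemma~\ref{lem:extension} (or, equivalently, one notes that $T^\emptyset_L$ is consistent with $\diagf(A)$ via Lemma~\ref{lem:diagram} and Lemma~\ref{lem:extension}, since finitely many instances of the axioms can always be satisfied), and take the union; the union is existentially closed because any existential formula with parameters in the union has its parameters and a witnessing extension appearing at some finite stage. This gives the mutual embeddability of $T^\emptyset_L$ and the empty theory.

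Finally, for model completeness I would show that any embedding $A\hookrightarrow B$ between models of $T^\emptyset_L$ is elementary. The cleanest route is to invoke the standard criterion that a theory whose models are exactly the existentially closed models of an inductive theory is model complete iff the class of existentially closed models is elementary — which we have just shown — together with the observation that $T^\emptyset_L$ has the amalgamation property over substructures (indeed over arbitrary $L$-structures): given $L$-structures with common substructure, one amalgamates the flat diagrams directly via Lemma~\ref{lem:diagram}, exactly as in the proof of Lemma~\ref{lem:extension}. Model completeness then follows from the amalgamation criterion (if $A\subseteq B$ are e.c.\ models, amalgamate $B$ with a $|B|^+$-saturated elementary extension of $A$ over $A$, embed the amalgam into an e.c.\ model, and chase existential formulas). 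The main obstacle is organizing this last step carefully: one must make sure the amalgamation of flat diagrams genuinely yields a consistent flat diagram (distinct variables from the two sides must be kept distinct unless forced equal by the common part, and the function-graph clauses must not clash), and then run the Tarski–Vaught-style back-and-forth to upgrade embeddings to elementary embeddings. Alternatively — and perhaps more in the spirit of the paper — one proves directly that $T^\emptyset_L$ admits quantifier elimination down to existential formulas by the amalgamation argument, from which model completeness is immediate; I would present whichever of these the subsequent development of the paper makes most convenient.
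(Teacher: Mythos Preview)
Your identification of $\mathrm{Mod}(T^\emptyset_L)$ with the existentially closed $L$-structures is exactly the paper's argument, using Lemmas~\ref{lem:extension} and~\ref{lem:flattening} in the same way. The only difference is that the paper stops there and cites the standard fact (\cite[Proposition~3.5.15]{ChangKeisler}) that an elementary axiomatization of the existentially closed models of an inductive theory is automatically the model companion, whereas you go on to sketch mutual embeddability and model completeness by hand. That extra work is correct in outline but unnecessary: once you have shown $T^\emptyset_L$ axiomatizes the e.c.\ $L$-structures, the cited black box delivers both cotheory and model completeness immediately (indeed, you implicitly invoke the same criterion midway through your model-completeness paragraph). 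The amalgamation argument you outline is not needed here, though it reappears in the paper as Lemma~\ref{lem:disjointamalgamation} to establish quantifier elimination.
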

\begin{proof}
It suffices to show that the class of existentially closed $L$-structures is axiomatized by $T^\emptyset_L$~\cite[Proposition 3.5.15]{ChangKeisler}.

Suppose $A$ is an existentially closed $L$-structure, and let $\Delta$ be an extension diagram in $(\overline{x},\overline{y})$. Let $\overline{a}$ be any non-redundant tuple from $A$ of the same length as $\overline{x}$. By Lemma~\ref{lem:extension}, there is an $L$-structure $B$ containing $A$ and a tuple $\overline{b}$ from $B$ such that $B\models \varphi_\Delta(\overline{a},\overline{b})$. So $B\models \exists\overline{y}\,\varphi_\Delta(\overline{a},\overline{y})$. But since $A$ is existentially closed, also $A\models \exists\overline{y}\,\varphi_\Delta(\overline{a},\overline{y})$. Hence $A\models \psi_\Delta$, and since $\Delta$ was arbitrary, $A\models T^\emptyset_L$.

Conversely, suppose the $L$-structure $A$ is not existentially closed. By Lemma~\ref{lem:flattening}, there is a non-redundant tuple $\overline{a}$ from $A$ and an extension diagram $\Delta$ in $(\overline{x},\overline{y})$ such that $A\models \lnot\exists\overline{y}\,\varphi_\Delta(\overline{a},\overline{y})$. Hence $A\not\models \psi_\Delta$, and $A\not\models T^\emptyset_L$.
\end{proof}

\begin{lem}\label{lem:disjointamalgamation}
The class of $L$-structures satisfies the disjoint amalgamation property.
\end{lem}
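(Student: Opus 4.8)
The plan is to build the amalgam concretely as $D = B\cup C$, using Lemma~\ref{lem:diagram} to define the interpretations of the function and relation symbols, and exploiting the fact that the complete flat diagrams $\diagf(B)$ and $\diagf(C)$ only constrain tuples lying entirely inside $B$ or entirely inside $C$.

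First I would set up the situation: given embeddings $A\hookrightarrow B$ and $A\hookrightarrow C$, identify $A$ with its images and rename the elements of $B\setminus A$ and $C\setminus A$ so that, as sets, $B\cap C = A$ and $A$ is a common substructure of $B$ and $C$. Let $D$ denote the set $B\cup C$; if $D=\emptyset$ (which can only happen when $L$ has no constants and $A=B=C=\emptyset$) there is nothing to prove, so assume $D\neq\emptyset$. Now form the flat diagram $\Delta = \diagf(B)\cup \diagf(C)$, regarded as a flat diagram in the variables $(w_d)_{d\in D}$; this makes sense since $B,C\subseteq D$, so every variable occurring in either $\diagf(B)$ or $\diagf(C)$ is among the $w_d$.

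The key step is to check that $\Delta$ is a \emph{consistent} flat diagram. Any failure of consistency would be a pair $R(\overline{z}),\lnot R(\overline{z})$ in $\Delta$, or two distinct witnesses $f(\overline{z})=z'$ and $f(\overline{z})=z''$ in $\Delta$, for a single tuple $\overline{z}$. Since $\diagf(B)$ mentions only tuples of variables indexed by elements of $B$, and $\diagf(C)$ mentions only tuples indexed by elements of $C$, such a $\overline{z}$ would have to be indexed entirely by $B\cap C = A$. But on a tuple from $A$, both $\diagf(B)$ and $\diagf(C)$ record exactly the atomic facts of the substructure $A$ (for $R$, $\lnot R$, and for $f(\overline{z})=z'$), so they agree and no conflict arises. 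Hence $\Delta$ is consistent, and Lemma~\ref{lem:diagram} produces an $L$-structure with domain $D$ satisfying every formula of $\Delta$; I will call this structure $D$ as well.

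Finally I would verify that $B$ and $C$ are substructures of $D$, which gives the disjoint amalgam. Because $\diagf(B)$ is a \emph{complete} consistent flat diagram for $B$, it contains the formula $f(w_{b_1},\dots,w_{b_n})=w_{b'}$ exactly when $f^B(b_1,\dots,b_n)=b'$, and for each relation symbol it contains $R(w_{b_1},\dots,w_{b_n})$ or $\lnot R(w_{b_1},\dots,w_{b_n})$ according to whether $B\models R(b_1,\dots,b_n)$; since $D\models \Delta\supseteq\diagf(B)$, the interpretations $f^D$ and $R^D$ restrict on $B$ to $f^B$ and $R^B$, so $B$ is a substructure of $D$, and symmetrically so is $C$. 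The inclusions $B\hookrightarrow D$ and $C\hookrightarrow D$ restrict to the given embedding on $A$ and satisfy $B\cap C = A$, so $D$ is a disjoint amalgam of $B$ and $C$ over $A$. I do not expect any genuine obstacle here: the only point that requires any thought is the consistency check, and it is immediate once one observes that $\diagf(B)$ and $\diagf(C)$ never speak about the same ``mixed'' tuple.
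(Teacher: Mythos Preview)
Your proposal is correct and follows essentially the same approach as the paper: form the union $\diagf(B)\cup\diagf(C)$, check consistency by observing that the two diagrams agree on $\diagf(A)$, apply Lemma~\ref{lem:diagram} to get the amalgam, and read off that $B$ and $C$ embed because $D$ satisfies their complete flat diagrams. Your write-up is somewhat more explicit about the consistency check and the verification that the inclusions are embeddings, but there is no substantive difference.
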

\begin{proof}
Let $f_1\colon A\to B$ and $f_2\colon A\to C$ be embeddings of $L$-structures. Let $B' = B\setminus f_1(A)$ and $C' = C\setminus f_2(A)$, and consider the diagrams $\diagf(B) = \Delta_B((x_a)_{a\in A},(x_b)_{b\in B'})$ and  $\diagf(C) = \Delta_C((x_a)_{a\in A},(x_c)_{c\in C'})$, where we use the same variables $(x_a)_{a\in A}$ to enumerate $f_1(A)$ and $f_2(A)$. 

Then $\Delta_B\cup \Delta_C$ is consistent, since the two diagrams agree on $\diagf(A)$. By Lemma~\ref{lem:diagram}, we get an $L$-structure $D$ with domain $A\cup B' \cup C'$, and the obvious maps $g_1\colon B\to D$ and $g_2\colon C\to D$ satisfy $g_1\circ f_1 = g_2\circ f_2$. These maps are embeddings, since $D$ satisfies $\Delta_B$ and $\Delta_C$, and the images of $B$ and $C$ are disjoint over the image of $A$.
\end{proof}

The following corollary now follows from standard facts about model completions (see \cite[Proposition 3.5.18]{ChangKeisler}).

\begin{cor}\label{cor:qe}
$T^\emptyset_L$ is a model completion of the empty $L$-theory, and it has quantifier elimination. The completions of $T^\emptyset_L$ are obtained by specifying (by quantifier-free sentences) the isomorphism type of the structure $\langle \emptyset\rangle$ generated by the constants. Such a completion $\widetilde{T}$ is the model completion of the theory of $L$-structures containing a substructure isomorphic to $\langle\emptyset\rangle$. If there are no constant symbols in $L$, then $T^\emptyset_L$ is complete.
\end{cor}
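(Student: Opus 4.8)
The plan is to deduce the corollary from two facts already in hand---that $T^\emptyset_L$ is the model companion of the empty $L$-theory, and that the class of $L$-structures (equivalently, the class of models of the empty $L$-theory) has the amalgamation property by Lemma~\ref{lem:disjointamalgamation}---together with the standard relationship between model companions, model completions, and quantifier elimination for universal theories \cite[\S 3.5]{ChangKeisler}. First I would observe that the empty $L$-theory is universal and has AP, so its model companion $T^\emptyset_L$ is in fact its model completion \cite[Proposition 3.5.18]{ChangKeisler}, and a model completion of a universal theory admits quantifier elimination.

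Next, for the description of the completions, I would use quantifier elimination directly: for $M,N\models T^\emptyset_L$ we have $M\equiv N$ if and only if $M$ and $N$ satisfy the same quantifier-free sentences. Every such sentence is a Boolean combination of atomic sentences, and every atomic sentence speaks only about the values of closed terms, which generate the substructure $\langle\emptyset\rangle$. So $M\equiv N$ exactly when the assignment of closed-term values in $M$ to closed-term values in $N$ is a well-defined isomorphism $\langle\emptyset\rangle^M\to\langle\emptyset\rangle^N$; thus the completions of $T^\emptyset_L$ are in bijection with the isomorphism types of $\langle\emptyset\rangle$, each pinned down over $T^\emptyset_L$ by the set of quantifier-free sentences recording which atomic sentences hold in it. The no-constants case falls out here: with no constant symbols there are no closed terms, hence no atomic sentences, so $T^\emptyset_L$ decides every quantifier-free sentence and is therefore complete by quantifier elimination.

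Then, to identify a fixed completion $\widetilde T$ (corresponding to a fixed isomorphism type of $\langle\emptyset\rangle$), I would let $T_0$ be the theory of $L$-structures containing a substructure isomorphic to $\langle\emptyset\rangle$ and check three things. (i) $T_0$ is universal: since $\langle\emptyset\rangle$ is itself generated by the constants, an $L$-structure $A$ satisfies $T_0$ if and only if its minimal substructure $\langle\emptyset\rangle^A$ is isomorphic to $\langle\emptyset\rangle$, so $T_0$ is axiomatized by the empty theory together with the quantifier-free sentences from the previous step. (ii) $T_0$ has AP: amalgamate within the class of all $L$-structures using Lemma~\ref{lem:disjointamalgamation}; the amalgam contains one of the two factors and therefore still has minimal substructure isomorphic to $\langle\emptyset\rangle$. (iii) An $L$-structure satisfying $T_0$ is existentially closed in the class of models of $T_0$ if and only if it is existentially closed among all $L$-structures---for if $A\models T_0$ and $A\subseteq B$ with $B$ an arbitrary $L$-structure, then $\langle\emptyset\rangle^B=\langle\emptyset\rangle^A$, so $B\models T_0$ as well. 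From (iii), the existentially closed models of $T_0$ are precisely the existentially closed $L$-structures with $\langle\emptyset\rangle^A\cong\langle\emptyset\rangle$, which are exactly the models of $\widetilde T$; hence $\widetilde T$ is the model companion of $T_0$, and by (i), (ii), and \cite[Proposition 3.5.18]{ChangKeisler} it is the model completion of $T_0$.

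I do not expect a serious obstacle: the content is entirely in the model-companion/model-completion dictionary for universal theories and in Lemma~\ref{lem:disjointamalgamation}. The one point that needs a little care is the bookkeeping around the substructure generated by the constants---specifically the equivalence between ``$A$ contains a substructure isomorphic to $\langle\emptyset\rangle$'' and ``$\langle\emptyset\rangle^A\cong\langle\emptyset\rangle$''---since this is exactly what makes $T_0$ universal and lets the standard machinery be applied a second time.
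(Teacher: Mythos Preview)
Your proposal is correct and follows exactly the approach the paper intends: the paper's own proof is simply ``follows from standard facts about model completions (see \cite[Proposition 3.5.18]{ChangKeisler}),'' and you have spelled out precisely those standard facts together with Lemma~\ref{lem:disjointamalgamation}. Nothing to change.
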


\begin{cor}
Let $\mathbb{M}$ be a monster model for some completion of $T^\emptyset_L$. For any set $A\subseteq \mathbb{M}$, $\acl(A) = \dcl(A) = \langle A\rangle$. 
\end{cor}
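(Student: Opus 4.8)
The plan is to prove $\langle A\rangle\subseteq\dcl(A)\subseteq\acl(A)$ and $\acl(A)\subseteq\langle A\rangle$ separately. The first chain is immediate: every element of $\langle A\rangle$ is $t^{\mathbb{M}}(\overline{a})$ for an $L$-term $t$ and a tuple $\overline{a}$ from $A$, hence lies in $\dcl(A)$, and $\dcl\subseteq\acl$ in any theory. So the content is the reverse inclusion, which I would prove by showing: if $b\in\mathbb{M}\setminus\langle A\rangle$, then $\tp(b/A)$ has $\geq\kappa$ realizations in $\mathbb{M}$ for every cardinal $\kappa$ below the saturation degree of $\mathbb{M}$, whence $b\notin\acl(A)$. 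The only inputs will be quantifier elimination (Corollary~\ref{cor:qe}), disjoint amalgamation (Lemma~\ref{lem:disjointamalgamation}), and the fact that $\mathbb{M}$, being a model of the model companion $T^\emptyset_L$, is existentially closed; no $\NSOP{1}$ machinery is needed.

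Fix such a $b$ and a cardinal $\kappa$. By quantifier elimination, $\tp(b/A)$ is determined by $\qftp(b/\langle A\rangle)$, so it suffices to find pairwise distinct $(b_\alpha)_{\alpha<\kappa}$ in $\mathbb{M}$ with $\qftp(b_\alpha/\langle A\rangle)=\qftp(b/\langle A\rangle)$. Consider the partial type $p\bigl((y_\alpha)_{\alpha<\kappa}\bigr)$ over $\langle A\rangle$ asserting that the $y_\alpha$ are pairwise distinct and that each $y_\alpha$ realizes $\qftp(b/\langle A\rangle)$ (in the variable $y_\alpha$); by saturation of $\mathbb{M}$ it is enough to show $p$ is finitely satisfiable in $\mathbb{M}$, i.e.\ that for each $n<\omega$, each quantifier-free $\chi(\overline{x},y)$ in $\qftp(b/\langle A\rangle)$, and the corresponding finite tuple $\overline{a}$ from $\langle A\rangle$, there are $n$ pairwise distinct elements of $\mathbb{M}$ satisfying $\chi(\overline{a},y)$. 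To see this, work over the substructure $\langle\overline{a}\rangle\subseteq\mathbb{M}$: inside $\mathbb{M}$, the substructure $\langle\overline{a}b\rangle$ contains $\langle\overline{a}\rangle$ and $b$ and satisfies $\chi(\overline{a},b)$, and $b\notin\langle\overline{a}\rangle$ since $\langle\overline{a}\rangle\subseteq\langle A\rangle$. Iterating disjoint amalgamation $n-1$ times produces an $L$-structure $C\supseteq\langle\overline{a}\rangle$ containing $n$ copies of $\langle\overline{a}b\rangle$, pairwise disjoint over $\langle\overline{a}\rangle$, with distinguished elements $b_1,\dots,b_n$ (the images of $b$); these are pairwise distinct and $C\models\bigwedge_{i\le n}\chi(\overline{a},b_i)$. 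A final disjoint amalgamation of $C$ with $\mathbb{M}$ over $\langle\overline{a}\rangle$ yields $N\supseteq\mathbb{M}$ into which $C$ embeds over $\langle\overline{a}\rangle$, so $N\models\exists\overline{y}\,\bigl(\bigwedge_{i\le n}\chi(\overline{a},y_i)\wedge\bigwedge_{i<j}y_i\neq y_j\bigr)$. Since this is an existential sentence with parameters $\overline{a}$ from $\mathbb{M}$ and $\mathbb{M}$ is existentially closed, it already holds in $\mathbb{M}$, producing the desired $n$ elements.

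I do not expect a genuine obstacle here; the argument is soft. The one place requiring care is the bookkeeping for iterating the two-structure disjoint amalgamation of Lemma~\ref{lem:disjointamalgamation} into an amalgam of $n$ copies of $\langle\overline{a}b\rangle$ over $\langle\overline{a}\rangle$ and then with $\mathbb{M}$ — a routine finite induction, using that $\langle\overline{a}\rangle$ is a common substructure of all the copies and of $\mathbb{M}$, and that $b\notin\langle\overline{a}\rangle$ — together with the trivial observation that every structure being amalgamated is nonempty, so that Lemma~\ref{lem:diagram} applies, because $b$ is always among their elements even when $L$ has no constants and $A=\emptyset$.
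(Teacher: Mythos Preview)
Your proof is correct and follows essentially the same approach as the paper: use quantifier elimination to reduce to quantifier-free types, then iterate disjoint amalgamation (Lemma~\ref{lem:disjointamalgamation}) to produce many copies of $b$ over the base. The only minor difference is in how the amalgam is realized inside $\mathbb{M}$: the paper embeds the amalgam $C$ directly into $\mathbb{M}$ over $\langle A\rangle$ via quantifier elimination and saturation, whereas you reduce to a finite base $\langle\overline{a}\rangle$, amalgamate once more with $\mathbb{M}$, and pull back the existential statement using that $\mathbb{M}$ is existentially closed --- a slightly longer route to the same destination.
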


\begin{proof}
Since $\langle A\rangle \subseteq \dcl(A) \subseteq \acl(A)$, it suffices to show that $\acl(A)\subseteq \langle A\rangle$. Suppose $\varphi(\overline{a},y)$ is an algebraic formula with parameters $\overline{a}$ from $A$, which is satisfied by exactly $k$ elements of $\mathbb{M}$, including $b$. By Corollary~\ref{cor:qe}, we may assume that $\varphi$ is quantifier-free. Suppose for contradiction that $b\notin \langle A\rangle$, so that $\langle A\rangle$ is a proper substructure of $B = \langle Ab\rangle$.  Let $C_{0} = B$ and, by induction, apply Lemma~\ref{lem:disjointamalgamation} to obtain a disjoint amalgam $C_{i+1}$ of $C_{i}$ and $B$ over $\langle A \rangle$.  Let $B_{i}$ denote the image of $B$ in $C_{i}$.  Then $C_{k+1}$ contains $\langle A\rangle$, together with substructures $B_1,\dots,B_{k+1}$, pairwise disjoint over $\langle A\rangle$ and each isomorphic to $B$ over $\langle A\rangle$. Then, by quantifier elimination and saturation, $C_{k+1}$ embeds in $\mathbb{M}$ over $\langle A\rangle$, and we may identify the $B_i$ with their images in $\mathbb{M}$. Each $B_i$ contains an element $b_i$ such that $\qftp(b_i/A) = \qftp(b/A)$, so $\mathbb{M}\models \varphi(\overline{a},b_i)$ for all $i$, which is a contradiction.
\end{proof}

\subsection{Independence and $\NSOP{1}$}

For the remainder of this section, we fix a monster model $\mathbb{M}\models T^\emptyset_L$.  As there is a monster model for every choice of completion of $T^\emptyset_L$ and $\mathbb{M}$ is arbitrary, to show that $T^\emptyset_L$ is $\NSOP{1}$, it suffices to establish this for $\text{Th}(\mathbb{M})$.  

\begin{thm}\label{thm:algind}
$\ind[a]$ satisfies the independence theorem over arbitrary sets.
\end{thm}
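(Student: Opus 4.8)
The plan is to construct by hand a single $L$-structure that amalgamates the substructures $\langle Cbc\rangle$, $\langle Ca_0 b\rangle$ and $\langle Ca_1 c\rangle$ in the configuration demanded by the conclusion, and then to embed that structure back into $\mathbb{M}$ over $\langle Cbc\rangle$ using quantifier elimination and saturation. First I would reduce to the case where $C=\langle C\rangle$ is a substructure of $\mathbb{M}$: the relation $\ind[a]$ and the relations $\equiv_C$, $\equiv_{Cb}$, $\equiv_{Cc}$ all depend only on $\langle C\rangle\subseteq\dcl(C)$, and a tuple that works over $\langle C\rangle$ works over $C$. Given the hypotheses $a_0\ind[a]_C b$, $a_1\ind[a]_C c$, $b\ind[a]_C c$, $a_0\equiv_C a_1$, quantifier elimination (Corollary~\ref{cor:qe}) supplies an isomorphism $\iota\colon\langle Ca_0\rangle\to\langle Ca_1\rangle$ fixing $C$ with $\iota(a_0)=a_1$.

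Next I would assemble the pieces. Keep $P=\langle Cbc\rangle$ together with its substructures $B=\langle Cb\rangle$ and $E=\langle Cc\rangle$ (note $B\cap E=C$). Choose a fresh isomorphic copy $A$ of $\langle Ca_0\rangle$ over $C$, with a distinguished tuple $a$ corresponding to $a_0$ and, via $\iota$, also to $a_1$, arranging $A\setminus C$ disjoint from $\mathbb{M}$. Using $a_0\ind[a]_C b$ (so that $\langle Ca_0\rangle\cap B=C$ inside $\langle Ca_0 b\rangle$), build a copy $Q_b'$ of $\langle Ca_0 b\rangle$ that literally contains $A$ and $B$ as substructures, with $A\cap B=C$ and $Q_b'=\langle A\cup B\rangle$, taking the remaining elements of $Q_b'$ fresh. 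Symmetrically, using $a_1\ind[a]_C c$, build a copy $Q_c'$ of $\langle Ca_1 c\rangle$ containing $A$ and $E$, with $A\cap E=C$ and $Q_c'=\langle A\cup E\rangle$, again with the remaining elements fresh and disjoint from everything chosen so far.

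The heart of the argument, and the step I expect to require the most care, is to check that $P$, $Q_b'$, $Q_c'$ are compatible: their underlying sets meet pairwise exactly in $B$, $E$, and $A$ respectively, with triple intersection $C$ (this uses the freshness bookkeeping together with $B\cap E=C$), and the induced structures agree on these intersections. On $B$ both $P$ and $Q_b'$ induce the structure of $\mathbb{M}$, similarly on $E$, and on $A$ both $Q_b'$ and $Q_c'$ induce the transport of the structure induced by $\mathbb{M}$ on $\langle Ca_0\rangle$ — the crucial point being that $\iota$ is an $L$-isomorphism, so that transporting the structure induced by $\mathbb{M}$ on $\langle Ca_1\rangle$ along the copy map $A\to\langle Ca_1\rangle$ yields the same structure on $A$; this is exactly where $a_0\equiv_C a_1$ is used.

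Granting compatibility, I would take $\Delta=\diagf(P)\cup\diagf(Q_b')\cup\diagf(Q_c')$ as a flat diagram in variables indexed by $D:=P\cup Q_b'\cup Q_c'$, using a single variable for any element lying in more than one piece. Any tuple of variables on which two of these complete flat diagrams both speak lies in the corresponding pairwise intersection, where they agree; so $\Delta$ is consistent, and Lemma~\ref{lem:diagram} yields an $L$-structure on $D$ realizing $\Delta$, in which $P$, $Q_b'$, $Q_c'$ are substructures. Hence $\langle Cbc\rangle_D=P$, $\langle Cab\rangle_D=Q_b'$, $\langle Cac\rangle_D=Q_c'$, $\langle Ca\rangle_D=A$, and $\langle Ca\rangle_D\cap\langle Cbc\rangle_D=A\cap P=C$. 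Finally, exactly as in the proof of the preceding corollary computing $\acl$, quantifier elimination and the saturation of $\mathbb{M}$ let $D$ embed into $\mathbb{M}$ by a map $g$ that is the identity on $P$. Put $a''=g(a)$: then $g$ carries $Q_b'$ isomorphically onto $\langle Cba''\rangle$ by a map fixing $Cb$ and sending $a''$ to $a_0$, so $a''\equiv_{Cb}a_0$ by quantifier elimination; likewise $a''\equiv_{Cc}a_1$; and $\langle Ca''\rangle\cap\langle Cbc\rangle=g(A\cap P)=g(C)=C$, i.e.\ $a''\ind[a]_C bc$. The only real obstacle is the amalgamation and its compatibility check in the third step; the concluding embedding argument is routine and already used in this section.
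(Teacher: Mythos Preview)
Your proposal is correct and is essentially the same argument as the paper's proof: both amalgamate the three flat diagrams $\diagf(\langle Cbc\rangle)$, $\diagf(\langle Ca_0 b\rangle)$, $\diagf(\langle Ca_1 c\rangle)$ after identifying $\langle Ca_0\rangle$ with $\langle Ca_1\rangle$ via the isomorphism coming from $a_0\equiv_C a_1$, invoke Lemma~\ref{lem:diagram} for consistency, and then embed into $\mathbb{M}$ over $\langle Cbc\rangle$ using quantifier elimination. The only difference is packaging: the paper indexes everything by tuples of variables $x_C,x_a,x_b,x_c,x_{ab},x_{ac},x_{bc}$, while you build explicit fresh copies $A,Q_b',Q_c'$ and check the pairwise intersections directly.
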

\begin{proof}
Suppose we are given $C \subseteq \mathbb{M}$ and tuples $a,a',b,c$, with $a\ind[a]_C b$, $a'\ind[a]_C c$, $b\ind[a]_C c$, and $a\equiv_C a'$. Let $x_C$ be a tuple enumerating $\langle C\rangle$, let $x_a$, $x_b$ and $x_c$ be tuples enumerating $\langle Ca\rangle\setminus \langle C\rangle$, $\langle Cb\rangle\setminus \langle C\rangle$, and $\langle Cc\rangle\setminus \langle C\rangle$, respectively, and let $x_{ab}$, $x_{ac}$, and $x_{bc}$ be tuples enumerating $\langle Cab\rangle\setminus(\langle Ca\rangle\cup \langle Cb\rangle)$, $\langle Ca'c\rangle\setminus(\langle Ca'\rangle\cup \langle Cc\rangle)$, and $\langle Cbc\rangle\setminus(\langle Cb\rangle\cup \langle Mc\rangle)$, respectively. 

Observe that $(x_C,x_a,x_b,x_{ab})$ enumerates $\langle Cab\rangle$ without repetitions. The only thing to check is that no elements of $x_a$ and $x_b$ name the same element of $\langle Cab\rangle$, and this is exactly the condition that $a\ind[a]_C b$. Similarly, $(x_C,x_a,x_c,x_{ac})$ enumerates $\langle Ca'c\rangle$ (where we view $x_a$ as enumerating $\langle Ca'\rangle\setminus \langle C\rangle$ via the isomorphism $\langle Ca\rangle \to \langle Ca'\rangle$ induced by $a\mapsto a'$), and $(x_C,x_b,x_c,x_{bc})$ enumerates $\langle Cbc\rangle$.

Let $p_{ab} = \diagf(\langle Cab\rangle)$, $p_{ac} = \diagf(\langle Ca'c\rangle)$, and $p_{bc} = \diagf(\langle Cbc\rangle)$. The flat diagram $p_{ab}(x_C,x_a,x_b,x_{ab})\cup p_{ac}(x_C,x_a,x_c,x_{ac})\cup p_{bc}(x_C,x_b,x_c,x_{bc})$ is consistent, since $p_{ab}$, $p_{ac}$, and $p_{bc}$ agree on $\diagf(\langle Ca\rangle) = \diagf(\langle Ca'\rangle)$ (again, allowing $x_a$ to enumerate $\langle Ca'\rangle\setminus \langle C\rangle$), $\diagf(\langle Cb\rangle)$, and $\diagf(\langle Cc\rangle)$. So by Lemma~\ref{lem:diagram}, it extends to the flat diagram of an $L$-structure $X$ with domain $x_C\cup x_a\cup x_b\cup x_c\cup x_{ab}\cup x_{ac}\cup x_{bc}$.

Having constructed $X$, which agrees with $\mathbb{M}$ on the substructure generated by the empty set, we can embed it in $\mathbb{M}$ by $i\colon X\to \mathbb{M}$. Further, $$\qftp(i(x_C),i(x_b),i(x_c),i(x_{bc})) = \qftp(\langle Cbc\rangle),$$ so by quantifier elimination $$(i(x_C),i(x_b),i(x_c),i(x_{bc})) \equiv \langle Cbc\rangle,$$ and, by an automorphism of $\mathbb{M}$, we may assume that $i(x_C,x_b,x_c,x_{bc}) = \langle Cbc\rangle$. Let $a''$ the subtuple of $i(x_C,x_a)$ corresponding to the subtuple of $(x_C,x_a)$ enumerating $a$.

Now $\qftp(i(x_C),i(x_a),i(x_b),i(x_{ab})) = \qftp(\langle Cab\rangle)$, so $a''\equiv_{Cb} a$, and similarly $\qftp(i(x_C),i(x_a),i(x_c),i(x_{ac})) = \qftp(\langle Cac\rangle)$, so $a''\equiv_{Cc} a'$. Finally, $a''\ind[a]_C bc$, since $i(x_a)$ enumerates $\langle Ca''\rangle\setminus C$ and is disjoint from $(i(x_b),i(x_c),i(x_{bc}))$, which enumerates $\langle Cbc\rangle\setminus C$.
\end{proof}

\begin{cor}\label{cor:characterization}
$T^\emptyset_L$ is $\NSOP{1}$ and $\ind[K] = \ind[a]$ over models.
\end{cor}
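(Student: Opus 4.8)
The strategy is to verify that $\ind[a]$ satisfies conditions (1)--(6) of Theorem~\ref{criterion}, applied to the completion $\text{Th}(\mathbb{M})$; then $\text{Th}(\mathbb{M})$ is $\NSOP{1}$ and $\ind[a] = \ind[K]$ over models. Since $\mathbb{M}$ was an arbitrary monster model for an arbitrary completion of $T^\emptyset_L$, this gives that $T^\emptyset_L$ is $\NSOP{1}$. Almost all of the work is already done: Lemma~\ref{algprops} gives extension, existence over models, monotonicity, symmetry, strong finite character, and witnessing in \emph{any} theory, which covers conditions (1)--(4) and (6). The only missing ingredient is condition (5), the independence theorem for $\ind[a]$ over models\textemdash but Theorem~\ref{thm:algind} supplies the independence theorem for $\ind[a]$ over \emph{arbitrary sets}, which in particular holds over models $M\prec\mathbb{M}$.

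So the proof is essentially a citation: apply Theorem~\ref{criterion} with $\ind := \ind[a]$, using Lemma~\ref{algprops} for (1), (2), (3), (4), (6) and Theorem~\ref{thm:algind} for (5). The conclusion of Theorem~\ref{criterion} then yields both that $\text{Th}(\mathbb{M})$ is $\NSOP{1}$ and that $\ind[a] = \ind[K]$ over models, since the ``Witnessing'' hypothesis (6) is satisfied.

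There is essentially no obstacle here; the only small point to be careful about is the quantifier $\exists^\infty$. Theorem~\ref{criterion} is stated for a fixed complete theory, and one should note that $\text{Th}(\mathbb{M})$ is complete (it is a completion of $T^\emptyset_L$), so the theorem applies directly; the phrase "an incomplete theory is $\NSOP{1}$ if every completion is" from the definition then lets us conclude that $T^\emptyset_L$ itself is $\NSOP{1}$. One might also remark, as a consequence of the identification $\acl(A) = \langle A\rangle$ established above, that algebraic independence in this theory is simply the statement that $\langle Ma\rangle \cap \langle Mb\rangle = \langle M\rangle$, which makes the combinatorial content of Theorem~\ref{thm:algind} transparent, but this is not needed for the formal argument.
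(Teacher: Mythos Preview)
Your proposal is correct and matches the paper's own proof exactly: the paper simply cites Theorem~\ref{thm:algind}, Lemma~\ref{algprops}, and Theorem~\ref{criterion}, which is precisely the argument you spell out. (One small writing slip: where you write ``the quantifier $\exists^{\infty}$'' you evidently mean the issue of completeness of $T^\emptyset_L$, since $\exists^{\infty}$ plays no role here; the subsequent sentences handle the completeness point correctly.)
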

\begin{proof}
By Theorem~\ref{thm:algind}, Lemma~\ref{algprops}, and Theorem~\ref{criterion}.
\end{proof}

On the other hand, except in trivial cases, $T^\emptyset_L$ has TP$_{2}$ and therefore is not simple.  For definitions of simple and TP$_{2}$ see, e.g., \cite{ChernikovNTP2}.  

\begin{prop}\label{prop:TP2}
If $L$ contains at least one $n$-ary function symbol with $n\geq 2$, then $T^\emptyset_L$ has TP$_2$, and is therefore not simple.  
\end{prop}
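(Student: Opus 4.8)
To show $T^\emptyset_L$ has $\mathrm{TP}_2$, I will exhibit a formula $\varphi(x;\overline{y})$ and an array of parameters $(\overline{c}_{i,j})_{i,j<\omega}$ witnessing the tree property of the second kind: each row $\{\varphi(x;\overline{c}_{i,j}) : j<\omega\}$ is $2$-inconsistent, while each path $\{\varphi(x;\overline{c}_{i,f(i)}) : i<\omega\}$ is consistent. Fix an $n$-ary function symbol $f$ with $n\geq 2$. The natural candidate formula is one that says $f$, applied to $x$ together with some fixed parameters, takes a prescribed value: working with $n=2$ for concreteness, let $\varphi(x;y_1,y_2)$ be $f(x,y_1) = y_2$ (for $n>2$, pad the extra coordinates of $f$ with a single fixed dummy parameter, or repeat $x$). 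The point is that $f$ is a genuine function: for fixed $a$ and fixed first-coordinate parameter $b$, the value $f(a,b)$ is uniquely determined, so $f(a,b)=b'$ and $f(a,b)=b''$ are jointly inconsistent whenever $b'\neq b''$. This is the source of $2$-inconsistency along rows.

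**Constructing the array.** I will choose, for each $i<\omega$, a row of parameters $\overline{c}_{i,j} = (d_i, e_{i,j})$ where the first coordinate $d_i$ is shared across the whole row $i$ (these play the role of $y_1$), and the $e_{i,j}$ (playing the role of $y_2$, the "target value") are chosen so that $e_{i,j}\neq e_{i,j'}$ for $j\neq j'$; then $\{f(x,d_i)=e_{i,j} : j<\omega\}$ is $2$-inconsistent for free, since $f(x,d_i)$ can equal at most one of the $e_{i,j}$. For consistency along a path: given $f\colon\omega\to\omega$, I need a single element $a$ with $f(a,d_i)=e_{i,f(i)}$ for all $i$. Using quantifier elimination and the extension axioms (the sentences $\psi_\Delta$), or equivalently Lemma~\ref{lem:extension} together with saturation, I can realize the required quantifier-free type: take the $d_i$ and the relevant $e_{i,j}$ to be a sequence of distinct "new" elements (an algebraically independent, i.e.\ freely generated, configuration), and then the partial diagram asserting $f(x,d_i)=e_{i,f(i)}$ for each $i$, with $x$ a fresh variable, is an extension diagram in the sense of Definition~\ref{def:algreas}'s surrounding setup — it only constrains terms involving the new variable $x$ — hence consistent and realized in $\mathbb{M}$. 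One must check this is a legitimate extension diagram: the formulas $f(x,d_i)=e_{i,f(i)}$ each involve the variable $x$ from the $\overline{y}$-block, there is no clash (the pairs $(x,d_i)$ are distinct as $i$ varies since the $d_i$ are distinct), and distinct left-hand sides get distinct or consistent right-hand sides; so consistency of the flat diagram holds and Lemma~\ref{lem:diagram} applies.

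**The main obstacle.** The genuinely delicate point is verifying $2$-inconsistency of rows in a way that is robust: I want $\{\varphi(x;\overline{c}_{i,0}),\varphi(x;\overline{c}_{i,1})\}$ inconsistent in $\mathbb{M}$, which is immediate from functionality of $f$ provided $e_{i,0}\neq e_{i,1}$, so that part is easy; the real work is arranging a single array for which all paths are simultaneously consistent, i.e.\ choosing the base configuration $(d_i)_{i<\omega}$ and $(e_{i,j})_{i,j<\omega}$ coherently as non-redundant, freely generated tuples so that for every $f\colon\omega\to\omega$ the associated extension diagram is consistent. The cleanest route is: build the $(d_i, e_{i,j})$ as an array of distinct elements generating a free structure (no relations, no unintended equalities among terms), then for each path-function $f$ observe that imposing $f(x,d_i)=e_{i,f(i)}$ introduces no inconsistency because the inputs $(x,d_i)$ are pairwise distinct and the outputs $e_{i,f(i)}$ are previously unconstrained; apply Lemma~\ref{lem:extension} to get a realization in an elementary extension, hence in $\mathbb{M}$ by saturation. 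I also need $|\overline{y}|=n$ finite and the array $\omega\times\omega$, which compactness handles. Finally, "therefore not simple" follows since $\mathrm{TP}_2$ implies $\mathrm{TP}$, so $T^\emptyset_L$ is not simple; alternatively cite that $\mathrm{NTP}_2$ fails. The trivial cases excluded by the hypothesis are exactly those where $L$ has no function symbol of arity $\geq 2$, in which case this construction is unavailable.
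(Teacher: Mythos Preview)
Your proposal is correct and follows essentially the same approach as the paper: use the formula $f(x,\overline{y}) = z$, take row-constant first-coordinate parameters and pairwise distinct target values to get $2$-inconsistency from functionality, and verify path consistency via the extension axioms. The only cosmetic difference is that the paper absorbs the case $n>2$ by taking the $b_i$ to be $(n-1)$-tuples rather than padding with a dummy or repeating $x$; also, your reference to Definition~\ref{def:algreas} is a slip (the relevant notion of extension diagram is in Section~\ref{sec:emptytheory}, not there).
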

\begin{proof}
In $\mathbb{M} \models T^\emptyset_L$, choose a set of pairwise distinct $(n-1)$-tuples $B = \{b_{i} : i < \omega\}$ and a set of pairwise distinct elements $C = \{c_{i,j} : i, j < \omega\}$ so that $B$ and $C$ are disjoint.  Note that:
\begin{itemize}
\item For all $i < \omega$, $\{f(x,b_{i}) = c_{i,j} \mid j < \omega\}$ is 2-inconsistent.
\item For all $g: \omega \to \omega$, $\{f(x,b_{i}) = c_{i,g(i)} \mid i < \omega\}$ is consistent.
\end{itemize}
Hence the formula $\varphi(x;y,z)$ given by $f(x,y) = z$ has TP$_{2}$, witnessed by the array $(b_i,c_{i,j})_{i<\omega,j<\omega}$.
\end{proof}

\subsection{Forking and dividing}

Next, we analyze forking and dividing in $T^\emptyset_L$. See~\cite[Section 5]{Adler} for the definitions of forking and dividing. We begin with an example of the distinction between forking independence and Kim-independence.

\begin{exmp}\label{ex:binaryfct}
Suppose $L$ contains an $n$-ary function symbol $f$ with $n\geq 2$. Let $M \models T^\emptyset_L$, let $b$ be any $(n-1)$-tuple not in $M$, let $c$ be any element not in $\langle Mb\rangle$, and let $a$ be an element satisfying $a\ind[a]_M bc$, but $f(a,b) = c$.  By Corollary~\ref{cor:characterization}, $a \ind[K]_{M} bc$.  But $c\in (\langle Mab\rangle\cap \langle Mbc\rangle)\setminus \langle Mb\rangle$, so $a \nind[a]_{Mb} c$. Then also $a\nind[f]_{M} bc$, since otherwise we would have $a\ind[f]_{Mb} c$ and hence $a\ind[a]_{Mb} c$, by base monotonicity.

This example is closely related to the TP$_2$ array in Proposition~\ref{prop:TP2}. The formula $f(x,b) = c$ divides along any sequence $(bc_i)_{i<\omega}$ where $b$ is constant but the $c_i$ are distinct (like the rows of the TP$_2$ array). But this formula does not Kim-divide, since in any Morley sequence $(b_ic_i)_{i<\omega}$ for a global $M$-invariant type extending $\tp(bc/M)$, the $b_i$ are distinct (like the columns of the TP$_2$-array). See also Proposition~\ref{prop:forkdiv} below.
\end{exmp}
 
\begin{defn}
For subsets $A$, $B$, and $C$ of $\mathbb{M}$, we define $$A\ind[M]_C B \iff \text{for all }C\subseteq C'\subseteq \acl(BC), A\ind[a]_{C'} B.$$
\end{defn}

We will show that $\ind[M]$ agrees with dividing independence $\ind[d]$ in $T^\emptyset_L$ (Proposition~\ref{dividing}). The notation $\ind[M]$ comes from Adler~\cite{Adler}, who calls this relation ``$M$-dividing independence''. Adler shows that algebraic independence $\ind[a]$ satisfies all of his axioms for a strict independence relation except possibly base monotonicity (which it fails in $T^\emptyset_L$ whenever there is an $n$-ary function symbol, $n\geq 2$). 

The relation $\ind[M]$ is obtained from $\ind[a]$ by forcing base monotonicity, and it satisfies all of the axioms of a strict independence relation except possibly local character and extension. If we go one step further and force extension, we get the relation $\ind[\text{\th}]$ of thorn forking independence~\cite[Section 4]{Adler}, just as we get the relation $\ind[f]$ of forking independence by forcing extension on $\ind[d]$. But, as we will see, $\ind[M]$ already satisfies extension in $T^\emptyset_L$ (Proposition~\ref{extension}), so $M$-dividing independence, thorn forking independence, dividing independence, and forking independence all coincide in $T^\emptyset_L$. Of course, when $L$ contains an $n$-ary function symbol with $n\geq 2$, these independence relations lack local character, since $T^\emptyset_L$ is not simple, so $T^\emptyset_L$ is also not rosy. In contrast, $\ind[a]=\ind[K]$ has local character but lacks base monotonicity. This tension between local character and base monotonicity is characteristic of the difference between forking independence and Kim-independence in $\NSOP{1}$ theories.

\begin{prop}\label{dividing}
In $T^\emptyset_L$, $\ind[d] = \ind[M]$.
\end{prop}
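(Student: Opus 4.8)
The plan is to prove the two inclusions of $\ind[d] = \ind[M]$ separately: the inclusion $\ind[d]\subseteq\ind[M]$ rests on soft general facts, while $\ind[M]\subseteq\ind[d]$ rests on disjoint amalgamation of flat diagrams, much as in the proof of Theorem~\ref{thm:algind}. For $\ind[d]\subseteq\ind[M]$ I would use two facts valid in any theory: dividing independence satisfies base monotonicity, and dividing independence implies algebraic independence. The first is standard. The second is transparent in $T^\emptyset_L$, where $\acl=\dcl=\langle\,\cdot\,\rangle$: if $a\ind[d]_C b$ but some $e\in\langle Ca\rangle\cap\langle Cb\rangle\setminus\langle C\rangle$, write $e=s(\bar c,a)=t(\bar c,b)$ with $\bar c$ from $C$; using Lemma~\ref{lem:disjointamalgamation} repeatedly one builds a $C$-indiscernible sequence $(b_i)_{i<\omega}$ with $b_0=b$ whose members satisfy $\langle Cb_i\rangle\cap\langle Cb_j\rangle=\langle C\rangle$ for $i\neq j$ (this property is local, so it survives extraction of an indiscernible sequence); then the $t(\bar c,b_i)$ are pairwise distinct, so the formula $s(\bar c,x)=t(\bar c,b)$ in $\tp(a/Cb)$ divides over $C$, a contradiction. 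Granting these facts: if $a\ind[d]_C b$ and $C\subseteq C'\subseteq\acl(Cb)=\langle Cb\rangle$, then $C'\subseteq\dcl(Cb)$, so $\tp(a/CC'b)$ has the same dividing behaviour over $C$ as $\tp(a/Cb)$, whence $a\ind[d]_C C'b$; base monotonicity gives $a\ind[d]_{C'}b$, so $a\ind[a]_{C'}b$. As $C'$ was arbitrary, $a\ind[M]_C b$.

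The substance is $\ind[M]\subseteq\ind[d]$. Fix $a\ind[M]_C b$, so $a\ind[a]_{C'}b$ for every substructure $C'$ with $\langle C\rangle\subseteq C'\subseteq\langle Cb\rangle$. To show $\tp(a/Cb)$ does not divide over $C$, I would use the standard reformulation: it suffices to show that for every $C$-indiscernible sequence $I=(b_i)_{i<\omega}$ with $b_0=b$ there is $a'$ with $a'b_i\equiv_C ab$ for all $i$. By quantifier elimination it is enough to produce an $L$-structure $N$ containing $\langle C\cup I\rangle$ together with an element $a'$ such that, for each $i$, the canonical isomorphism $\langle Cb\rangle\to\langle Cb_i\rangle$ fixing $C$ and sending $b\mapsto b_i$ (well-defined since $b_i\equiv_C b$) extends to an isomorphism $\langle Cab\rangle\to\langle Ca'b_i\rangle$ sending $a\mapsto a'$, and so that these isomorphisms are pairwise compatible; then $N$, which agrees with $\mathbb{M}$ on the substructure generated by the empty set, embeds in $\mathbb{M}$ over $\langle C\cup I\rangle$ by quantifier elimination and saturation, and the image of $a$ is the required $a'$.

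To build $N$, set $D_*:=\bigcap_{m<\omega}\langle Cb_m\rangle$. A short indiscernibility/propagation argument (if $d=t(\bar c,b_i)=s(\bar c,b_j)$ in $\langle C\cup I\rangle$, then this equation transfers to all pairs of indices, forcing $d$ into every $\langle Cb_m\rangle$) shows that $\langle Cb_i\rangle\cap\langle Cb_j\rangle=D_*$ for all $i\neq j$, that $D_*$ is a substructure with $\langle C\rangle\subseteq D_*\subseteq\langle Cb\rangle$, and that each isomorphism $\langle Cb\rangle\to\langle Cb_i\rangle$ restricts to the identity on $D_*$. Using $a\ind[a]_{D_*}b$ one has $\langle D_*a\rangle\cap\langle Cb\rangle=D_*$ inside $\langle Cab\rangle$, so by Lemma~\ref{lem:disjointamalgamation} we may fix a single copy $S$ of $\langle D_*a\rangle$ amalgamated with $\langle C\cup I\rangle$ over $D_*$, with $S\setminus D_*$ disjoint from $\langle C\cup I\rangle$. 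Then, for each $i$, glue a copy $X_i$ of $\langle Cab\rangle$ onto $\langle C\cup I\rangle$ along $\langle Cb\rangle\cong\langle Cb_i\rangle$ and onto $S$ along $\langle D_*a\rangle$, choosing the remaining elements of each $X_i$ fresh and pairwise disjoint, and form the union of $\diagf(\langle C\cup I\rangle)$ with the $\diagf(X_i)$. Each $X_i$ then decomposes as $S$ together with $\langle Cb_i\rangle$ and fresh elements, overlapping in $D_*\subseteq S\cap\langle Cb_i\rangle$, so the only overlaps among pieces are $X_0\cap X_i=\langle Cb_i\rangle$ and $X_i\cap X_j=S$, on which the pieces agree by construction; hence the union of flat diagrams is consistent, and Lemma~\ref{lem:diagram} produces $N$.

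I expect the main obstacle to be precisely this bookkeeping: verifying the claims about $D_*$ (that the pairwise intersections of the $\langle Cb_i\rangle$ collapse to it and that the $b_i$-translations fix it), checking that the pieces $X_i$ are well-defined and introduce no unintended identifications, and — threaded through all of it — recognizing that the instances $a\ind[a]_{C'}b$ for intermediate bases (above all $C'=D_*$), which would be automatic under base monotonicity but are not, are exactly what the hypothesis $a\ind[M]_C b$ supplies. Once $N$ and $a'$ are in hand, $a'b_i\equiv_C ab$ for all $i$ follows from quantifier elimination, completing the proof.
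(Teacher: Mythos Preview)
Your proof is correct and takes essentially the same approach as the paper: identify the constant part of the indiscernible sequence (your $D_*$ is exactly the paper's $C'$, obtained there more cheaply by first replacing $b$ with an enumeration of $\langle Cb\rangle$ so that $C'$ is simply the set of repeating coordinates), invoke the hypothesis at the intermediate base to get $a\ind[a]_{D_*}b$, and amalgamate copies of $\diagf(\langle Cab\rangle)$ over the shared piece $\langle D_*a\rangle$ (your $S$ corresponds to the paper's tuple $(x,c')$ enumerating $\langle AC'\rangle$). One small slip in your bookkeeping: where you write $X_0\cap X_i=\langle Cb_i\rangle$ you must mean the overlap of $X_i$ with the base piece $\langle C\cup I\rangle$, since $X_0$ itself also contains $S$.
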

\begin{proof}
In any theory, if $A\ind[d]_C B$, then $A\ind[M]_C B$~\cite[Remark 5.4.(4)]{Adler}. So suppose $A\ind[M]_C B$. We may assume $B = \acl(BC) = \langle BC\rangle$, since $A\ind[M]_C B$ implies $A\ind[M]_C \acl(BC)$ and $A\ind[d]_C \acl(BC)$ implies $A\ind[d]_C B$. 

Let $b$ be a tuple enumerating $B$, and let $(b_i)_{i<\omega}$ be a $C$-indiscernible sequence, with $b_0 = b$. Let $B_i$ be the set enumerated by $b_i$. Let $C'$ be the set of all elements of $B$ which appear in some $b_i$ for $i\neq 0$. Then $C\subseteq C'\subseteq B$, every element of $C'$ appears in every $b_i$, and $C' = \langle C'\rangle$. Letting $c'$ enumerate $C'$ and writing $b_i = (c',b_i')$ for all $i$, we have that $(b_i')_{i<\omega}$ is a $C'$-indiscernible sequence, and $b_i'$ and $b_j'$ are disjoint for all $i\neq j$. 

Let the tuple $x$ enumerate $\langle AC'\rangle \setminus C'$, and let the tuple $y_0$ enumerate $\langle AB\rangle \setminus (\langle AC'\rangle \cup B)$. By assumption, we have $A\ind[a]_{C'} B$, so $(x,y_0,c',b_0')$ enumerates $\langle AB\rangle$ without repetitions (since no elements of $x$ and $b_0'$ are equal). Let $D = \langle \bigcup_{i<\omega} B_i\rangle$, and let $d$ enumerate $D\setminus \bigcup_{i<\omega} B_i$. 

Let $p(x,y_0,c',b_0') = \diag_f(\langle AB\rangle)$, and let $q(c',(b_i')_{i<\omega},d) = \diag_f(D)$. Consider the flat diagram $q(c',(b_i')_{i<\omega},d)\cup \bigcup_{i<\omega} p(x,y_i,c',b_i')$, where the $y_i$ for $i>0$ are new tuples. This is consistent, since any two copies of $p$ agree on $\diag_f(\langle AC'\rangle)$, and the copy of $p$ indexed by $i$ agrees with $q$ on $\diag_f(B_i)$. So by Lemma~\ref{lem:diagram}, there is an $L$-structure $X$ with this diagram, and we can embed $X$ in $\mathbb{M}$ over $D$ by $i\colon X\to \mathbb{M}$, since $\qftp_X(c',(b_i')_{i<\omega},d') = \qftp_{\mathbb{M}}(c',(b_i')_{i<\omega},d')$.

Letting $A'$ be the subset of $(i(x),c')$ corresponding to $A$ as a subset of $(x,c')$, we have by quantifier elimination that $\tp(A'/B_i) = \tp(A/B)$ for all $i$. So $A\ind[d]_C B$. 
\end{proof}

\begin{prop}\label{extension}
The relations $\ind[M]$ and $\ind[d]$ satisfy extension over arbitrary sets, so $\ind[M] = \ind[\text{\th}]$ and $\ind[d] = \ind[f]$.
\end{prop}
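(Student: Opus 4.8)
The plan is to deduce everything from the single assertion that $\ind[M]$ satisfies extension. By Proposition~\ref{dividing} we have $\ind[d]=\ind[M]$, so one statement covers both relations; and since $\ind[f]$ (resp.\ $\ind[\text{\th}]$) is obtained from $\ind[d]$ (resp.\ $\ind[M]$) precisely by forcing extension, the equalities $\ind[d]=\ind[f]$ and $\ind[M]=\ind[\text{\th}]$ follow at once once $\ind[M]$ is known to have extension.

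First I would reduce extension to the special case of \emph{full existence}: for all $a$, $B$, $C$ there is $a'\equiv_C a$ with $a'\ind[M]_C B$. Granting this, suppose $a\ind[M]_C B$ and $D\supseteq B$. Replacing $B$ by $\langle BC\rangle$ and $D$ by $\langle DC\rangle$ (harmless, since $\ind[M]$ is unchanged under replacing the right side by its $\acl$ and under shrinking the right side), we may assume $C\subseteq B\subseteq D$ with $B$, $D$ algebraically closed. Apply full existence with base $B$ to get $a''\equiv_B a$ with $a''\ind[M]_B D$. Since $\ind[M]$ is $\Aut(\mathbb{M})$-invariant and $C\subseteq B$, the hypothesis transfers to $a''\ind[M]_C B$; since $\ind[M]$ satisfies transitivity, $a''\ind[M]_C B$ and $a''\ind[M]_B D$ yield $a''\ind[M]_C D$, and $a''\equiv_B a$ gives $a''\equiv_{BC}a$ and hence, by right monotonicity, all the data needed for extension. (One could instead attack extension head‑on, via a free amalgam of $\langle CaB\rangle$ with $D$ over $\langle CB\rangle$, but that requires invoking the full strength of the hypothesis $a\ind[M]_C B$ along the way, whereas full existence has no hypothesis; so the reduction is a genuine simplification.)

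For full existence I would use a free amalgamation construction in the style of the proofs of Theorem~\ref{thm:algind} and Proposition~\ref{dividing}, but arranged so that the amalgam is genuinely free rather than having arbitrary values on mixed function terms. Put $P=\langle Ca\rangle$ and $R=\langle CB\rangle$, with $P\cap R=\langle C\rangle$. Form the flat diagram consisting of $\diagf(P)$ and $\diagf(R)$ on variable sets sharing exactly the variables for $\langle C\rangle$, together with, for every function symbol $f$ and every tuple $\overline z$ of variables that is ``genuinely mixed'' (it involves both a variable outside the $P$-block and one outside the $R$-block, or it involves a variable introduced at an earlier stage), a fresh variable $w_{f,\overline z}$ and the equation $f(\overline z)=w_{f,\overline z}$; iterate this last clause. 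This flat diagram is consistent, so Lemma~\ref{lem:diagram} realizes it as an $L$-structure $Y$ in which $P$ and $R$ are substructures, $P\cap R=\langle C\rangle$, and every genuinely mixed function value is a brand new element. Since $Y$ agrees with $\mathbb{M}$ on the substructure generated by $\emptyset$, quantifier elimination and saturation embed $Y$ into $\mathbb{M}$, and after an automorphism we may take the embedding to be the identity on $R=\langle CB\rangle$; let $a'$ be the image of $a$. The copy of $P$ embeds fixing $C$ and preserving quantifier-free types, so $a'\equiv_C a$, and $\langle Ca'\rangle\cap\langle CB\rangle=\langle C\rangle$.

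It then remains to check $a'\ind[M]_C B$, i.e.\ $\langle C'a'\rangle\cap\langle CB\rangle=\langle C'\rangle$ for every $C\subseteq C'\subseteq\langle CB\rangle$ (the inclusion $\supseteq$ being trivial), and this is the step I expect to be the main obstacle. The point is a normal-form analysis of the free amalgam $Y$: a function value in $Y$ is a fresh element as soon as one argument is fresh or the arguments are genuinely mixed, so an element of $\langle C'a'\rangle\cap\langle CB\rangle$ can only be produced by alternately applying the operations of $\langle Ca'\rangle$ and of $\langle CB\rangle$, handing off between the two sides only through their common part $\langle Ca'\rangle\cap\langle CB\rangle=\langle C\rangle\subseteq C'$. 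A short induction on such alternating computations (the ``trace'' of the computation on $\langle C\rangle$ being stable) shows the result never leaves $\langle C'\rangle$. This is the same genericity phenomenon that underlies disjoint amalgamation (Lemma~\ref{lem:disjointamalgamation}), now needed uniformly across all the generated substructures $\langle C'a'\rangle$ at once. It gives $a'\ind[a]_{C'}\langle CB\rangle$ for every such $C'$, hence $a'\ind[M]_C\langle CB\rangle=a'\ind[M]_C B$, completing the proof.
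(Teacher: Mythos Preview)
Your proof is correct, but it takes a genuinely different route from the paper's.

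\textbf{What the paper does.} The paper attacks extension directly: given $A\ind[M]_C B$ and $B'\supseteq B$, it forms the fibered coproduct $\langle AB\rangle\otimes_B B'$, embeds it over $B'$, and runs an induction on terms. The induction carries two invariants simultaneously (membership in $C'$ and membership in $\langle A'\widetilde C\rangle$, where $\widetilde C=C'\cap B$), and the hypothesis $A\ind[M]_C B$ is used in an essential way (Case~2) to close the loop. The paper then records the content of this argument as a ``mixed transitivity'' statement $A\ind[d]_C B$ and $A\ind[\otimes]_B B'$ $\Rightarrow$ $A\ind[d]_C B'$ (Remark~\ref{rem:mixedtrans}).

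\textbf{What you do.} You reduce extension to full existence, invoking transitivity of $\ind[M]$ (which the paper itself asserts, citing Adler, in the paragraph preceding Proposition~\ref{dividing}). For full existence you amalgamate $\langle Ca\rangle$ with $\langle CB\rangle$ over $\langle C\rangle$ rather than over $B$; since full existence has no hypothesis, your term induction needs only the single observation that every element of $\langle C'a'\rangle$ lies in $C'$, in $P\setminus\langle C\rangle$, or is fresh---so intersecting with $R=\langle CB\rangle$ forces membership in $C'$. Your parenthetical remark correctly anticipates the paper's approach and explains why you chose otherwise.

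\textbf{Trade-offs.} Your argument isolates the combinatorial core (the free-amalgam normal form) more cleanly, and the term induction is lighter since there is no hypothesis to thread through. The price is reliance on transitivity of $\ind[M]$ as a black box from Adler. The paper's approach is self-contained and yields the mixed-transitivity statement with $\ind[\otimes]$ as a bonus, which it uses to draw the analogy with Conant's work on $T_n$ in Remark~\ref{rem:mixedtrans}; your route does not produce that corollary.
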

\begin{proof}
By Proposition~\ref{dividing}, it suffices to show that $\ind[M]$ satisfies extension. Suppose we have $A\ind[M]_C B$, and let $B'$ be another set. We may assume that $C\subseteq A$, $C\subseteq B \subseteq B'$, and $A$, $C$, $B$ and $B'$ are algebraically closed. We would like to show that there exists $A'\equiv_B A$ such that $A'\ind[M]_C B'$.

Let $D = \langle AB\rangle \otimes_B B'$, the fibered coproduct of $\langle AB\rangle$ and $B'$ over $B$ in the category of $L$-structures. We can give an explicit description of $D$: Let $D_0$ be the disjoint union of $\langle AB\rangle$ and $B'$ over $B$, i.e.\ with the elements of $B$ in $\langle AB\rangle$ and in $B'$ identified. Any term with parameters from $D_0$ can be uniquely simplified with respect to $\langle AB\rangle$ and $B'$, by iteratively replacing any function symbol whose arguments are all elements of $\langle AB\rangle$, or whose  arguments are all elements of $B'$, by its value (as usual, we view constant symbols as $0$-ary functions). Then the underlying set of $D$ is given by the simplified terms with parameters from $D_0$, i.e.\ those terms with the property that no function symbol appearing in the term has all its argument in $\langle AB\rangle$ or all its arguments in $B'$. The interpretation of function symbols in $D$ is the obvious one (compose the function with the given simplified terms, then simplify if necessary), and the only instances of relations which hold in $D$ are those which hold in $\langle AB\rangle$ or in $B'$. 

Note that $\langle AB\rangle$ and $B'$ embed in $D$, by sending an element $a$ to the term $a\in D_0$, and $\langle AB\rangle \cap B' = B$ in $D$. Identifying these structures with their isomorphic copies in $D$, there is an embedding $i\colon D\to \mathbb{M}$ which is the identity on $B'$. Let $A' = i(A)$. Then $\langle A'B\rangle$ is isomorphic to $\langle AB\rangle$, so $A'\equiv_B A$. In particular, $A'\ind[M]_C B$. Of course, $\langle A'B'\rangle$ is isomorphic to $\langle A'B\rangle \otimes_B B'$.

Towards showing that $A'\ind[M]_C B'$, pick $C'$ with $C\subseteq C'\subseteq B'$. We may assume that $C'$ is algebraically closed. Let $\widetilde{C} = C'\cap B$, which is also algebraically closed. We will prove by induction on terms with parameters from $A'C'$ that:
\begin{enumerate}
\item If such a term evaluates to an element of $B'$, then that element is in $C'$.
\item If such a term evaluates to an element of $\langle A'B\rangle$, then that element is in $\langle A'\widetilde{C}\rangle$.
\end{enumerate}

First, we handle the base cases. The constant symbols are automatically in $C'$ and in $\langle A'\widetilde{C}\rangle$. The parameters from $C'$ are already in $C'$, and if they are also in $\langle A'B\rangle$, then since $C'\subseteq B'$, they are in $\langle A'B\rangle \cap B' = B$, and hence in $\widetilde{C}\subseteq \langle A'\widetilde{C}\rangle$. On the other hand, the parameters from $A'$ are already in $\langle A'\widetilde{C}\rangle$, and if they are also in $B'$, then they are in $\langle A'B\rangle \cap B' = B$, hence in $A'\cap B = C$ (as $A'\ind[a]_C B$), and $C\subseteq C'$. 

So suppose our term is $f(t_1,\dots,t_n)$, where $t_1,\dots,t_n$ are terms with parameters from $A'C'$ satisfying (1) and (2). Suppose $t_i$ evaluates to $c_i$ for all $i$, and let $b = f(c_1,\dots,c_n)$.

Case 1: $c_i\in B'$ for all $i$. Then by induction, $c_i\in C'$ for all $i$, and hence $b\in C'$. And if $b$ is also in $\langle A'B\rangle$, then by the argument for parameters in $C'$, it is in $\langle A'\widetilde{C}\rangle$.

Case 2: $c_i\in \langle A'B\rangle$ for all $i$. Then by induction, $c_i\in \langle A'\widetilde{C}\rangle$ for all $i$. (2) is immediate, since also $b\in \langle A'\widetilde{C}\rangle$. For (1), suppose $b\in B'$. Then $b\in \langle A'B\rangle\cap B' = B$. But since $A'\ind[M]_C B$, we have $A'\ind[a]_{\widetilde{C}} B$, so $b\in \widetilde{C}\subseteq C'$.

Case 3: Neither of the above. Then writing each $c_i$ in its normal form as a simplified term in $\langle A'B\rangle \otimes B'$, the element $b$ does not simplify down to a single parameter from $\langle A'B\rangle$ or from $B'$, since it is not the case that all of the arguments of $f$ come from $\langle A'B\rangle$ or from $B'$. So (1) and (2) are trivially satisfied.

Of course, condition (1) establishes that $A'\ind[a]_{C'} B'$, as desired.
\end{proof}

\begin{rem}\label{rem:mixedtrans}
If we define a new relation $\ind[\otimes]$ on subsets of $\mathbb{M}$ by $A\ind[\otimes]_C B$ if and only if the natural map $\langle AC\rangle \otimes_{\langle C\rangle} \langle  BC\rangle\to \langle ABC\rangle$ is an isomorphism, then we can interpret the proof of Proposition~\ref{extension} as a ``mixed transitivity'' statement. If $C\subseteq B \subseteq B'$, then: $$A\ind[d]_C B \text{ and }A\ind[\otimes]_B B' \implies A\ind[d]_C B'.$$
Thanks to this form of transitivity, $\ind[d]$ inherits extension from $\ind[\otimes]$.

It may be worth noting that a similar pattern occurs in Conant's analysis of forking and dividing in the theory $T_n$ of the generic $K_n$-free graph~\cite{Conant17}, which has $\SOP{3}$ (and hence also has $\SOP{1}$) when $n\geq 3$. Conant defines a relation $\ind[R]$ which satisfies extension (\cite[Lemma 5.2]{Conant17}), and the proof of \cite[Theorem 5.3]{Conant17} shows that $\ind[d]$ and $\ind[R]$ enjoy the same mixed transitivity property: $$A\ind[d]_C B \text{ and } A\ind[R]_B B' \implies A\ind[d]_C B'.$$
As a consequence, $\ind[d]$ inherits extension from $\ind[R]$, and hence $\ind[d] = \ind[f]$ in $T_n$.
\end{rem}

Proposition~\ref{extension} tells us that forking equals dividing for complete types. On the other hand, forking does not equal dividing for formulas, even over models.

\begin{prop}\label{prop:forkdiv}
Suppose $L$ contains an $n$-ary function symbol $f$ with $n\geq 2$. For any set $A$, there is a formula which forks over $A$ but does not divide over $A$.
\end{prop}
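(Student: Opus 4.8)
The plan is to exhibit a single formula $\varphi(x; b, c)$ over a suitable base that forks but does not divide, exploiting the non-modularity of the algebraically closed sets in the same way as Example~\ref{ex:binaryfct}. First I would set up the parameters: work over a model $M \models T^\emptyset_L$, pick an $(n-1)$-tuple $b$ disjoint from $M$, and pick two distinct elements $c_0, c_1 \notin \langle Mb\rangle$ with $\langle M b c_0\rangle \cap \langle Mbc_1\rangle = \langle Mb\rangle$. The candidate formula is the disjunction $$\varphi(x; b, c_0, c_1) \;:=\; \bigl(f(x, b) = c_0\bigr) \lor \bigl(f(x, b) = c_1\bigr).$$ I expect $\varphi$ to fork over $M$ because each disjunct $f(x,b) = c_i$ divides over $M$: along a sequence $(b, c_i^j)_{j<\omega}$ in which $b$ is held fixed and the $c_i^j$ are pairwise distinct realizations of $\tp(c_i/Mb)$, the formula $f(x,b) = c_i^j$ is $2$-inconsistent (exactly as in Proposition~\ref{prop:TP2}), so $f(x,b) = c_i$ divides over $M$, hence $\varphi$ forks over $M$.

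The substance is showing $\varphi$ does \emph{not} divide over $M$. Here I would use the characterization $\ind[d] = \ind[M]$ from Proposition~\ref{dividing}, or argue directly. Given any $M$-indiscernible sequence $(b^j, c_0^j, c_1^j)_{j<\omega}$ with $(b^0, c_0^0, c_1^0) = (b, c_0, c_1)$, I must produce a realization of $\{\varphi(x; b^j, c_0^j, c_1^j) \mid j < \omega\}$. The key dichotomy is whether $b^j$ is constant along the sequence. If the $b^j$ are pairwise distinct, then I can realize $f(x, b^j) = c_0^j$ simultaneously for all $j$ by a single disjoint-amalgamation argument (Lemma~\ref{lem:disjointamalgamation} / Lemma~\ref{lem:diagram}): build the $L$-structure with an extra element $a$ and the relations $f(a, b^j) = c_0^j$, note consistency since the $b^j$ are distinct, embed into $\mathbb{M}$ via quantifier elimination. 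If instead $b^j = b$ is constant, then by indiscernibility and the choice of base, the $c_i^j$ must still vary in a controlled way — I would argue that indiscernibility forces, for each fixed $i$, either all $c_i^j$ equal or all pairwise distinct, and that one can still choose a disjunct (perhaps $i$ depending on the sequence) and an element $a$ with $f(a,b) = c_i^j$ for all $j$; the point is that when $b$ is constant and the $c_i^j$ are distinct, $f(x,b) = c_i^j$ is $2$-inconsistent, but then the \emph{other} index must be constant, or an intermediate case occurs, and in each case at least one disjunct is realizable by an amalgamation argument as above.

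The main obstacle is this last case analysis: ensuring that for \emph{every} $M$-indiscernible sequence extending $(b, c_0, c_1)$, at least one of the two disjuncts $\{f(x, b^j) = c_i^j \mid j<\omega\}$ is consistent. The cleanest route is probably to observe that if $\{f(x,b^j) = c_0^j \mid j<\omega\}$ is inconsistent then (by compactness and indiscernibility) it is already $2$-inconsistent, which forces $b^j$ constant and the $c_0^j$ pairwise distinct; but then $f(x,b) = c_1^j$ is realizable for all $j$ precisely when the $c_1^j$ are \emph{not} forced to collide with the (varying) $c_0^j$ — and I would arrange the original choice of $c_0, c_1$ and possibly pass to a three-fold disjunction, or add a dummy disjunct $f(x,b) = c_2$, to guarantee room. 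I would double-check whether two disjuncts suffice or whether a finite disjunction of the form $\bigvee_{i<k}(f(x,b) = c_i)$ with $k$ large enough (depending on nothing, by indiscernibility collapsing the $c_i^j$ into finitely many "types" of behavior) is needed; I expect two suffices once the base is chosen so that $\tp(c_0 c_1 / Mb)$ already records $c_0 \neq c_1$ and their mutual algebraic independence over $Mb$. Once non-dividing is established, forking is immediate from the disjunct-by-disjunct dividing observation, completing the proof.
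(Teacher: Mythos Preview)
Your candidate formula $\varphi(x;b,c_0,c_1) = \bigl(f(x,b)=c_0\bigr)\lor\bigl(f(x,b)=c_1\bigr)$ actually \emph{does} divide over $M$, so the approach fails. The problematic case is exactly the one you flag as ``the main obstacle'': take an $Mb$-indiscernible sequence $(c_0^j,c_1^j)_{j<\omega}$ in $\tp(c_0,c_1/Mb)$ in which all the elements $c_i^j$ (for $i\in\{0,1\}$, $j<\omega$) are pairwise distinct --- such a sequence exists because $c_0,c_1\notin\acl(Mb)$ (e.g.\ take a Morley sequence in any global $Mb$-finitely satisfiable extension). Then $(b,c_0^j,c_1^j)_{j<\omega}$ is $M$-indiscernible, and for any $a$ the single value $f(a,b)$ lies in at most one of the pairwise disjoint sets $\{c_0^j,c_1^j\}$, so $\{\varphi(x;b,c_0^j,c_1^j)\mid j<\omega\}$ is $2$-inconsistent. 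Passing to a $k$-fold disjunction $\bigvee_{i<k}(f(x,b)=c_i)$ does not help: the same argument goes through with the sets $\{c_0^j,\dots,c_{k-1}^j\}$ pairwise disjoint.

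The missing idea is that you need a second disjunct whose dividing sequences force $b$ to \emph{vary}, complementing $f(x,b)=c$, whose dividing sequences force $b$ to be \emph{constant}. The paper achieves this with the disjunct $f(x,x)=b$ (or more precisely $f(x,\dots,x)=b_1$ where $b_1$ is a coordinate of the $(n-1)$-tuple $b$): this formula divides along any sequence with $b$ varying, but when $b$ is constant it is satisfied by a fixed $a$, while $f(x,b)=c$ handles the case where $b$ varies. The two disjuncts together cover both behaviours of an arbitrary indiscernible sequence. (A minor additional point: the statement is for an arbitrary set $A$, not just a model, so you should pick $b\notin\langle A\rangle$ and $c\notin\langle Ab\rangle$ directly rather than passing to a model.)
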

\begin{proof}
Let $b = (b_{0},\ldots, b_{n-1})$ be an $(n-1)$-tuple such that $b_{0}\notin \langle A\rangle$, and let $c$ be an element such that $c\notin \langle Ab\rangle$. Then the formula $\varphi(x;b,c)$ given by $(f(x,b) = c)\lor (x = b_{0})$ forks over $A$ but does not divide over $A$.

First, we show that the subformulas $f(x,b) = c$ and $x = b_{0}$ divide over $A$. For the first, let $(d_ic_i)_{i<\omega}$ be any sequence of realizations of $\tp(bc/A)$ such that $d_i = b$ for all $i<\omega$, but $c_i\neq c_j$ for all $i\neq j$ (this is possible, since $c\notin \acl(Ab)$). Then $\{f(x,d_i) = c_i\mid i<\omega\}$ is $2$-inconsistent. For the second, let $(e_i)_{i<\omega}$ be a sequence of realizations of $\tp(b_0/A)$ with $e_i\neq e_j$ for all $i\neq j$ (this is possible, since $b_0\notin \acl(A)$). Again, $\{x = e_i\mid i<\omega\}$ is $2$-inconsistent. Since $\varphi(x;b,c)$ is a disjunction of two formulas which divide over $A$, it forks over $A$.

To show that this formula does not divide, let $(d_ic_i)_{i<\omega}$ be any $A$-indiscernible sequence with $d_0c_0 = bc$. Write $d_i = (d_{i,0},\dots,d_{i,n-1})$.  If $d_{i,0} = b_0$ for all $i<\omega$, $\{\varphi(x;d_i,c_i)\mid i<\omega\}$ is consistent, since it is satisfied by $b_0$ itself. If not, then $d_{i,0}\neq d_{j,0}$ for all $i\neq j$, and we can find some $a$ such that $f(a,d_i) = c_i$ for all $i<\omega$, so $\{\varphi(x;d_i,c_i)\mid i<\omega\}$ is consistent in this case too.
\end{proof}

\subsection{Elimination of imaginaries}

\begin{defn}
The theory $T$ has \emph{weak elimination of imaginaries} if for all imaginary elements $e$, there is a real element $a \in \text{acl}^{\text{eq}}(e)$ with $e \in \text{dcl}^{\text{eq}}(a)$.  
\end{defn}

We prove weak elimination of imaginaries for $T^\emptyset_L$. The argument follows the standard route to elimination of imaginaries via an independence theorem as in \cite[Proposition 3.1]{hrushovski1991pseudo} and \cite[Subsection 2.9]{chatzidakis1998generic}.  

\begin{prop}\label{wei}
$T^\emptyset_L$ has weak elimination of imaginaries.
\end{prop}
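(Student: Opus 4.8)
The plan is to follow the standard criterion for weak elimination of imaginaries via an independence theorem, as in \cite[Proposition 3.1]{hrushovski1991pseudo} and \cite[Subsection 2.9]{chatzidakis1998generic}. The key fact is: if a theory has an automorphism-invariant ternary independence relation $\ind$ defined over arbitrary (algebraically closed) sets that satisfies existence, monotonicity, symmetry, and the independence theorem over arbitrary sets, then it has weak elimination of imaginaries, provided also that for every real tuple $a$ and every set $C$ there is a ``canonical base''-like small set over which $a$ is independent from everything. In our setting the relevant independence relation is $\ind[a]$, and the crucial input is Theorem~\ref{thm:algind}: $\ind[a]$ satisfies the independence theorem over arbitrary sets. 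Since $\acl = \dcl = \langle \cdot\rangle$ in $T^\emptyset_L$, algebraic closures of small sets are small and well understood, which is exactly what makes the argument go through.

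Concretely, I would argue as follows. Let $e \in \mathbb{M}^{\mathrm{eq}}$ be imaginary, say $e = f(\overline{a})$ for a real tuple $\overline{a}$ and a $\emptyset$-definable function $f$ into the relevant sort; let $p(\overline{x})$ be the (partial) type over $e$ saying $f(\overline{x}) = e$. Set $E = \acl^{\mathrm{eq}}(e) \cap \mathbb{M}$, the real algebraic closure of $e$; clearly $e \in \dcl^{\mathrm{eq}}$ of any realization of $p$, and $E \subseteq \acl^{\mathrm{eq}}(e)$. The goal is to produce a \emph{real finite} tuple $c \in \acl^{\mathrm{eq}}(e)$ with $e \in \dcl^{\mathrm{eq}}(c)$; it suffices to show that $e \in \dcl^{\mathrm{eq}}(E)$, since then, by a routine compactness/finiteness argument, $e$ is already in the definable closure of a finite subtuple of $E$. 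To show $e \in \dcl^{\mathrm{eq}}(E)$, take two realizations $\overline{a}_1, \overline{a}_2 \models p$; using extension for $\ind[a]$ (Lemma~\ref{algprops}) we may assume $\overline{a}_1 \ind[a]_{E} \overline{a}_2$ after moving by an automorphism fixing $e$ (hence fixing $E$ pointwise in the real sort). Now let $\overline{a}_3$ be a third realization of $p$ with $\overline{a}_3 \ind[a]_E \overline{a}_1 \overline{a}_2$. Applying the independence theorem for $\ind[a]$ over the set $E$ (Theorem~\ref{thm:algind}) to the configuration $\overline{a}_1 \ind[a]_E \overline{a}_3$, $\overline{a}_2 \ind[a]_E \overline{a}_3$, $\overline{a}_1 \ind[a]_E \overline{a}_2$ together with $\overline{a}_1 \equiv_E \overline{a}_2$, we find $\overline{a}'$ with $\overline{a}' \equiv_{E\overline{a}_1} \overline{a}_1$ and $\overline{a}' \equiv_{E\overline{a}_2} \overline{a}_2$. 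Since $f(\overline{a}_1) = e$ is determined over $E\overline{a}_1$ and likewise for $\overline{a}_2$, applying $f$ to $\overline{a}'$ shows $f(\overline{a}_1) = f(\overline{a}') = f(\overline{a}_2)$, i.e.\ any two realizations of $p$ give the same value of $f$; but they all give value $e$, so this is vacuous — the real content is the reverse: this argument shows that $e$ is fixed by any automorphism fixing $E$ setwise, which (since $E$ is algebraically closed and automorphisms of $\mathbb{M}$ act on it) gives $e \in \dcl^{\mathrm{eq}}(E)$.

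The standard packaging of this is: an imaginary $e$ is in $\dcl^{\mathrm{eq}}$ of its real algebraic closure $E$ as soon as, whenever $\sigma$ is an automorphism of $\mathbb{M}$ fixing $E$ pointwise, $\sigma$ fixes $e$. So suppose $\sigma(e) = e' \neq e$ with $\sigma$ fixing $E$. Pick $\overline{a} \models p$ (i.e.\ $f(\overline{a}) = e$) and $\overline{b} \models \sigma(p)$ (i.e.\ $f(\overline{b}) = e'$) with $\overline{a} \ind[a]_E \overline{b}$, using extension; note $\overline{a} \equiv_E \overline{b}$ since $\sigma$ fixes $E$. Now take a fresh $\overline{c}$ with $f(\overline{c}) = e$ and $\overline{c} \ind[a]_E \overline{a}\overline{b}$. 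We have $\overline{c} \ind[a]_E \overline{a}$, $\overline{c} \ind[a]_E \overline{b}$, $\overline{a}\ind[a]_E \overline{b}$ and $\overline{a}\equiv_E\overline{c}$; the independence theorem over $E$ gives $\overline{d}$ with $\overline{d}\equiv_{E\overline{a}}\overline{c}$ and $\overline{d}\equiv_{E\overline{b}}\overline{a}$. From the first, $f(\overline{d}) = f(\overline{c}) = e$; from the second, $f(\overline{d}) = f(\overline{a}) = e$, but also from $\overline{d} \equiv_{E\overline{b}} \overline{a}$ we get that $(\overline{d},\overline{b})$ and $(\overline{a},\overline{b})$ have the same type over $E$, so $f(\overline{d})$ should relate to $f(\overline{b}) = e'$ the same way $f(\overline{a})$ relates to $f(\overline{b})$; tracking the value $f$ of the first coordinate in $\tp(\overline{a}\overline{b}/E)$ versus $\tp(\overline{c}\overline{b}/E) = \tp(\overline{a}\overline{b}/E)$ — here one uses that $e, e' \in \dcl^{\mathrm{eq}}(\overline{b})$ are distinct — forces $e = f(\overline{a})$ and $e' = f(\overline{b})$ into a contradiction with $\overline{a}\equiv_E\overline{c}$ unless $e = e'$. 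I expect the main obstacle to be bookkeeping: making the three-way amalgamation argument precise so that the value $f$ takes on the amalgamated tuple is genuinely forced to equal both $e$ and $e'$. This is handled cleanly by working with the type $q(\overline{x},\overline{x}') = \tp(\overline{a},\overline{c}/E)$ (which equals $\tp(\overline{a},\overline{a}/E)$ shifted appropriately) and noting that the formula $f(\overline{x}) = f(\overline{x}')$ belongs to it; the independence theorem then yields a realization compatible over $E\overline{b}$ with $\tp(\overline{a}/E\overline{b})$ and over $E\overline{a}$ with $\tp(\overline{c}/E\overline{a})$, and applying $f$ collapses $e$ and $e'$. Existence, monotonicity, symmetry, and extension for $\ind[a]$ are all available from Lemma~\ref{algprops}, and the independence theorem over arbitrary sets is Theorem~\ref{thm:algind}, so all the ingredients are in place; the remaining work is purely the standard elimination-of-imaginaries manipulation.
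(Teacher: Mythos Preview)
Your overall strategy is the same as the paper's: show $e\in\dcl^{\mathrm{eq}}(E)$ where $E=\acl^{\mathrm{eq}}(e)\cap\mathbb{M}$, by assuming there is $e'\neq e$ conjugate to $e$ over $E$ and deriving a contradiction from the independence theorem for $\ind[a]$ over $E$ (Theorem~\ref{thm:algind}). However, the specific amalgamation you set up does \emph{not} yield a contradiction. From $\overline{d}\equiv_{E\overline{a}}\overline{c}$ you correctly get $f(\overline{d})=f(\overline{a})=e$, but from $\overline{d}\equiv_{E\overline{b}}\overline{a}$ the only relation between $f(\overline{d})$ and the parameters is $f(\overline{d})\neq f(\overline{b})=e'$, since $\tp(\overline{a}/E\overline{b})$ contains $f(x)\neq f(\overline{b})$ and nothing stronger. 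So you conclude $f(\overline{d})=e$ and $f(\overline{d})\neq e'$, which is perfectly consistent. The hedged remarks at the end (``tracking the value \ldots forces \ldots into a contradiction'') do not repair this: there is no formula over $E\overline{b}$ that pins $f(\overline{a})$ to $e$ or to $e'$, precisely because we are assuming $e\notin\dcl^{\mathrm{eq}}(E)$.

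The fix is to reassign the roles so that the two ``base'' parameters in the independence theorem both have $f$-value $e$, while the element with $f$-value $e'$ is one of the two tuples being amalgamated. Concretely, with your same $\overline{a},\overline{b},\overline{c}$, apply Theorem~\ref{thm:algind} in the form $a_0=\overline{a}$ over $\overline{c}$, $a_1=\overline{b}$ over $\overline{a}$ (using $\overline{a}\equiv_E\overline{b}$, $\overline{a}\ind[a]_E\overline{c}$, $\overline{b}\ind[a]_E\overline{a}$, $\overline{c}\ind[a]_E\overline{a}$). The output $\overline{d}$ satisfies $\overline{d}\equiv_{E\overline{c}}\overline{a}$, giving $f(\overline{d})=f(\overline{c})=e$, and $\overline{d}\equiv_{E\overline{a}}\overline{b}$, giving $f(\overline{d})\neq f(\overline{a})=e$ --- now a genuine contradiction. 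This is exactly the paper's configuration. Two smaller points you should also tighten: (i) you assert $\overline{a}\equiv_E\overline{c}$ without justification --- choose $\overline{c}\models\tp(\overline{a}/\acl^{\mathrm{eq}}(e))$, not merely $f(\overline{c})=e$; (ii) to obtain $\overline{a}\ind[a]_E\overline{b}$ while preserving $f(\overline{b})=e'$ and $\overline{a}\equiv_E\overline{b}$, run extension for $\ind[a]$ in $\mathbb{M}^{\mathrm{eq}}$ over $\acl^{\mathrm{eq}}(e)$ (as the paper does in its Claim) and then intersect with the home sort; extension over $E$ alone in the real sort need not preserve the $f$-value.
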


\begin{proof}
Suppose we are given an imaginary element $e$, and suppose $a$ is a tuple from $\mathbb{M}$ and $f$ is a $0$-definable function (in $\mathbb{M}^{eq}$) with $f(a) = e$.  Put $C = \text{acl}^{\text{eq}}(e) \cap \mathbb{M}$ and $q = \text{tp}(a/C)$.  We may assume $\text{tp}(a/\text{acl}^{eq}(e))$ is not algebraic, because, if it is, we're done.  

\textbf{Claim}:  There are $a,b \models q$ with $a \ind[a]_{C} b$ with $f(a) = f(b) = e$.  

\emph{Proof of claim}:  $\text{tp}(a/\text{acl}^{\text{eq}}(e))$ is non-algebraic so, by extension, we can find $b \models \text{tp}(a/\text{acl}^{\text{eq}}(e))$ with $b \ind[a]_{\text{acl}^{\text{eq}}(e)} a$ in $\mathbb{M}^{\text{eq}}$. Note that also $f(b) = e$. Since $\text{acl}^{\text{eq}}(a) \cap \text{acl}^{\text{eq}}(b) = \text{acl}^{\text{eq}}(e)$, by intersecting with $\mathbb{M}$, we obtain $\text{acl}(a) \cap \text{acl}(b) = C$, that is, $a \ind[a]_{C} b$.  \qed 

Let $a,b$ be given as in the claim.  If $e$ is definable over $C$, we are done.  If $e$ is not definable over $C$, there is $e' \models \text{tp}(e/C)$ with $e' \neq e$ and we can find $c',d' \models q$ with $c' \ind[a]_{C} d'$ and $f(c') = f(d') = e'$, again by the claim.  As $c' \ind[a]_{C} d'$ we may, by extension, choose $c \equiv_{Cd'} c'$ with $c \ind[a]_{C} ad'$.  In particular, this gives $c \ind[a]_{C} a$ and $f(c) \neq f(a)$.  

As $a,c \models q$ we have $a \equiv_{C} c$.  Moreover, we have $a \ind[a]_{C} b$ and $c \ind[a]_{C} a$ so there is $a_{*} \models \text{tp}(a/ Cb) \cup \text{tp}(c/Ca)$ by the independence theorem (Theorem~\ref{thm:algind}).  Then we have $f(a_{*}) = f(b) = f(a) \neq f(a_{*})$, a contradiction.  
\end{proof}

\begin{rem}
$T^\emptyset_L$ does not eliminate imaginaries, since it does not even code unordered pairs. That is, there is no definable binary function $f(x,y)$ such that $f(a,b) = f(c,d)$ if and only if $\{a,b\} = \{c,d\}$. To see this, note that, by quantifier elimination, every definable function is defined piecewise by terms. Let $F_2$ be the $L$-structure freely generated over $\langle \emptyset\rangle$ (the substructure generated by the constants) by two elements, $a$ and $b$. Then if $t$ is a term, considered in the variable context $\{x,y\}$, such that $t(a,b) = t(b,a)$, then $t$ does not mention the variables, i.e.\ $t$ evaluates to an element of $\langle \emptyset\rangle$. For any copy of $F_2$ embedded in $\mathbb{M}$, $\tp(a,b) = \tp(b,a)$, since the automorphism of $F_2$ swapping $a$ and $b$ extends to an automorphism of $\mathbb{M}$. So any function $f$ coding unordered pairs must be defined by the same term $t$ on $(a,b)$ and on $(b,a)$. Then $t(a,b) = t(b,a)\in \langle \emptyset\rangle$. But this is a contradiction, since there are automorphisms of $\mathbb{M}$ which do not fix $\{a,b\}$ setwise.
\end{rem}

\section{Generic expansion and Skolemization}\label{sec:expsko}

\subsection{The theories $T_{\Sk}$ and $T_{L'}$}

\begin{defn}
Given a language $L$, define the language $L_{\Sk}$ by adding to $L$, for each formula $\varphi(x;y)$ with $l(x) = 1$, an $l(y)$-ary function symbol $f_{\varphi}$.  The \emph{Skolem expansion of }$T$ is the $L_{\Sk}$-theory $T_{+}$ defined by 
$$
T_{+} = T \cup \{\forall y\, (\exists x\, \varphi(x;y) \to \varphi(f_{\varphi}(y);y)) \mid \varphi(x;y) \in L, l(x) = 1\}.
$$
\end{defn}

Note that the Skolem expansion of $T$ contains Skolem functions for every formula of $L$, but does \emph{not} contain Skolem functions for every formula of $L_{\Sk}$.  

\begin{fact}\label{winkler}
Suppose $T$ is a model complete theory in the language $L$ that eliminates the quantifier $\exists^{\infty}$.  
\begin{enumerate}
\item The Skolem expansion $T_{+}$ of $T$ has a model completion $T_{\Sk}$ \cite[Theorem 2]{winkler1975model}.  We will refer to the theory $T_{\Sk}$ as the \emph{generic Skolemization of }$T$.
\item For any language $L'$ containing $L$, $T$, considered as an $L'$-theory, has a model companion $T_{L'}$ \cite[Theorem 5]{winkler1975model}.  We will refer to the theory $T_{L'}$ as the \emph{generic} $L'$\emph{-expansion of }$T$.
\end{enumerate}
\end{fact}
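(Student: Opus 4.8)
This statement is Winkler's theorem \cite{winkler1975model}; the two parts have the same shape, so I would treat them in parallel. The first point is that both classes at issue are inductive, i.e.\ closed under unions of chains: since $T$ is model complete it is preserved under unions of chains, hence $\forall\exists$-axiomatizable, so the class of $L'$-structures whose $L$-reduct models $T$ (relevant for (2)) is inductive; and the Skolem axioms defining $T_+$ are in fact \emph{universal}, since $\exists x\,\varphi(x;\overline{y})\to\varphi(f_\varphi(\overline{y});\overline{y})$ is logically equivalent to $\forall x\,(\varphi(x;\overline{y})\to\varphi(f_\varphi(\overline{y});\overline{y}))$, so the class of models of $T_+$ (relevant for (1)) is inductive too. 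Consequently existentially closed models exist in each class, and it suffices to show that the existentially closed models form an elementary class; this yields a model companion. For part (1) one additionally verifies the amalgamation property for substructures of models of $T_+$, which upgrades the companion to a model completion (see \cite[Proposition 3.5.18]{ChangKeisler}).

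The heart of the proof is to attach to each quantifier-free formula $\psi(\overline{x},\overline{y})$ of the expanded language a first-order formula $\theta_\psi(\overline{x})$ such that, for $M$ in the class and $\overline{a}$ a tuple from $M$, $M\models\theta_\psi(\overline{a})$ iff there is an extension $N\supseteq M$ in the class with $N\models\exists\overline{y}\,\psi(\overline{a},\overline{y})$. Granting this, the existentially closed models are exactly the models of the base theory together with the axioms $\forall\overline{x}\,(\theta_\psi(\overline{x})\to\exists\overline{y}\,\psi(\overline{x},\overline{y}))$, one for each $\psi$, which is the desired axiomatization.

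To construct $\theta_\psi$: passing to disjunctive normal form and introducing auxiliary variables to name the values of all terms (flattening, in the spirit of Lemma~\ref{lem:flattening}), we may assume $\psi$ describes a complete atomic diagram of $\overline{a}\,\overline{b}$ in the new symbols together with a flat partial $L$-diagram. The tuple $\overline{b}$, together with $M$ and closure under all function symbols, must generate a structure in the class extending $M$; the only substantive constraint comes from the $L$-reduct, which must embed over the $L$-reduct of $M$ into a model of $T$, and (for part (1)) from the requirement that the $f_\varphi$ remain Skolem functions, which imposes only further $L$-conditions of the same kind since each such $\varphi$ is an $L$-formula. The genuine obstruction is that $\psi$ may force a new element, or an $L$-term in $\overline{b}$, into an $L$-definable set that is \emph{algebraic} over $\overline{a}$: since $T$ is model complete and the $L$-reduct of $M$ is a model of $T$, any solution in any model of $T$ extending that reduct of such an algebraic $L$-formula over parameters from $M$ already lies in $M$, so such points cannot be adjoined as genuinely new. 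Conversely, away from this obstruction one adjoins $\overline{b}$ as freely as possible: a partial $L$-diagram forcing no algebraicity is realized in an elementary extension of the $L$-reduct of $M$ by a compactness argument over its elementary diagram, using model completeness (so that all models of $T$ extending it are elementary extensions, and a non-algebraic $L$-formula always has a solution outside any prescribed finite set), and that elementary extension is again a model of $T$. The elimination of $\exists^{\infty}$ is precisely what makes this first-order: for each $\varphi(x;\overline{z})$ there is $N_\varphi<\omega$ so that ``$\varphi(x;\overline{z})$ has fewer than $N_\varphi$ solutions'' is expressed by an $L$-formula in $\overline{z}$, so one can detect by a first-order condition on $\overline{a}$ which portions of the extension problem are forced algebraic, and then demand that $M$ already contains the corresponding witnesses. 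Combining these conditions with the $\overline{x}$-only part of $\psi$ produces $\theta_\psi$.

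For part (1) it then remains to check that substructures of models of $T_+$ amalgamate: given embeddings of two such structures $B,C$ over a common $A$, model completeness of $T$ gives an elementary amalgam of the $L$-reducts of $B$ and $C$ over that of $A$, and one extends the new relation and function symbols over this amalgam, interpreting each $f_\varphi$ freely where unconstrained and coherently with the Skolem axioms where constrained, enlarging the structure as needed. I expect the main obstacle to be the construction of $\theta_\psi$: one must isolate exactly when a finite extension problem in the enriched language is solvable and verify the condition is first-order, and this is the one place where \emph{both} hypotheses on $T$ are genuinely used — model completeness to reduce solvability to embeddability into a model of $T$ and to pin down the algebraic elements, and elimination of $\exists^{\infty}$ to make ``algebraic'' uniformly definable. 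The amalgamation step for part (1) is a comparatively routine supplement.
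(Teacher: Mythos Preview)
The paper does not prove this statement: it is recorded as a \emph{Fact} with citations to \cite{winkler1975model} (Theorems 2 and 5 there), and no argument is given in the present paper. So there is no proof here to compare your proposal against.

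That said, your sketch is a reasonable outline of Winkler's approach: reduce to axiomatizing the existentially closed models, flatten quantifier-free extension problems, and use model completeness together with elimination of $\exists^{\infty}$ to detect first-order when a finite extension problem is solvable. The paper does carry out a special case of this machinery in Section~\ref{sec:emptytheory} (for the empty $L$-theory, where the $\theta_\psi$ are trivial), so your invocation of flattening in the spirit of Lemma~\ref{lem:flattening} is apt. If you want a detailed comparison you would need to consult \cite{winkler1975model} directly.
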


\begin{fact} \label{stillreasonable} \cite[Corollary 3 to Theorem 4]{winkler1975model}
Under the hypotheses of Fact~\ref{winkler}, $T_{\Sk}$ also eliminates the quantifier $\exists^{\infty}$.
\end{fact}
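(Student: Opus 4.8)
The plan is to deduce elimination of $\exists^{\infty}$ for $T_{\Sk}$ from the corresponding property of $T$, using a relative quantifier elimination for $T_{\Sk}$ together with a counting argument on ``term patterns.'' Recall that it is enough to prove: for every $L_{\Sk}$-formula $\varphi(x;\overline{y})$ with $x$ a single variable there is $n<\omega$ such that, in a monster model $N\models T_{\Sk}$, every fiber $\varphi(N;\overline{b})$ is either of size at most $n$ or infinite. Since $T_{\Sk}$ is the model completion of $T_{+}$ (Fact~\ref{winkler}) it is model complete; the first step is the stronger, relative quantifier elimination, which can be extracted from Winkler's analysis of the existentially closed models of $T_{+}$ (Theorem~4 of~\cite{winkler1975model}): every $L_{\Sk}$-formula is equivalent modulo $T_{\Sk}$ to a Boolean combination of $L$-formulas $\chi(\tau_{1}(x,\overline{y}),\dots,\tau_{k}(x,\overline{y}))$, where the $\tau_{i}$ are $L_{\Sk}$-terms, and equations $\sigma(x,\overline{y})=\sigma'(x,\overline{y})$ between $L_{\Sk}$-terms. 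Intuitively this holds because the $L$-reduct $N|_{L}$ is a very saturated model of $T$, and over it the values of the Skolem terms are as generic as the Skolem axioms allow.

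Fix $\varphi$, let $\tau_{1},\dots,\tau_{m}$ enumerate the $L_{\Sk}$-subterms occurring in its relative-QE normal form, and let $\overline{c}$ enumerate the values, in $\langle\overline{b}\rangle_{L_{\Sk}}$, of those subterms of the $\tau_{i}$ that do not involve $x$; the length of $\overline{c}$ is bounded in terms of $\varphi$ alone. For a solution $x$ of $\varphi(x;\overline{b})$, define its \emph{term pattern over} $\overline{b}$ to record which of $x,\tau_{1}(x,\overline{b}),\dots,\tau_{m}(x,\overline{b})$ coincide and which coincide with which entry of $\overline{c}$. There are only finitely many possible patterns, their number bounded in terms of $\varphi$, so $\varphi(N;\overline{b})$ is a union of boundedly many pattern classes. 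Within a fixed pattern $\pi$ all the term equations are decided, and after identifying terms according to $\pi$ each $L$-conjunct $\chi(\tau_{1}(x,\overline{b}),\dots)$ becomes an $L$-formula in $x$ with parameters from $\overline{c}$; hence the set of $x$ lying in the pattern-$\pi$ part of $\varphi(N;\overline{b})$ is the intersection of $\Theta_{\pi}(N;\overline{c})$ with the combinatorial conditions defining $\pi$, for a single $L$-formula $\Theta_{\pi}(x,\overline{z})$ depending only on $\varphi$ and $\pi$.

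Now apply elimination of $\exists^{\infty}$ for $T$, in the model $N|_{L}\models T$, to $\Theta_{\pi}(x,\overline{z})$: there is $n_{\pi}<\omega$ such that $\Theta_{\pi}(N;\overline{c})$ is either of size at most $n_{\pi}$ or infinite. In the first case the pattern-$\pi$ part of $\varphi(N;\overline{b})$ has size at most $n_{\pi}$. In the second case $\varphi(N;\overline{b})$ is already infinite: if some $x_{0}\in\Theta_{\pi}(N;\overline{c})$ realizes pattern $\pi$ over $\overline{b}$ (if none does, the pattern contributes nothing), then Winkler's extension axioms for $T_{\Sk}$ let us amalgamate into $N$, over $\langle\overline{b}\rangle_{L_{\Sk}}$, fresh elements with the same $L$-type as $x_{0}$ over the relevant finite part of $\langle\overline{b}\rangle_{L_{\Sk}}$ and the same term pattern over $\overline{b}$; iterating and using saturation of $N$ produces infinitely many solutions. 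Therefore, if $\varphi(N;\overline{b})$ is finite then only the first case occurs for each $\pi$, and $|\varphi(N;\overline{b})|\leq\sum_{\pi}n_{\pi}$, a bound depending only on $\varphi$, as required.

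The main obstacle is everything in the first and third paragraphs resting on a description of the existentially closed models of $T_{+}$: establishing the relative quantifier elimination and the amalgamation of fresh solutions. Both reduce to the statement that, over the $L$-reduct, Skolem term-values are free subject only to the Skolem axioms, so that a single element is algebraic over $\overline{b}$ in $T_{\Sk}$ exactly when it is $L$-algebraic over $\langle\overline{b}\rangle_{L_{\Sk}}$; uniform boundedness of the latter is precisely elimination of $\exists^{\infty}$ in $T$. Once this structure theory (Winkler's Theorem~4) is in hand, the term-pattern count above is routine. The same scheme, with $L_{\Sk}$ replaced by an arbitrary $L'\supseteq L$ and the Skolem terms by the new symbols of $L'$, shows that $T_{L'}$ eliminates $\exists^{\infty}$ as well.
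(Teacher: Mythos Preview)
The paper does not prove this statement at all: it is stated as a Fact and cited directly from \cite[Corollary 3 to Theorem 4]{winkler1975model}, so there is no ``paper's proof'' to compare against. Your proposal is therefore an attempt to reconstruct Winkler's argument, and I will assess it on its own merits.

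There is a genuine gap in your second paragraph. You claim that after fixing a term pattern $\pi$, ``each $L$-conjunct $\chi(\tau_{1}(x,\overline{b}),\dots)$ becomes an $L$-formula in $x$ with parameters from $\overline{c}$.'' This is only true when the pattern identifies every term value $\tau_i(x,\overline{b})$ with either $x$ or an entry of $\overline{c}$. In general the pattern will leave some term values as \emph{new} elements, equal to none of $x,\overline{c}$; for such a pattern, $\chi$ becomes an $L$-formula in $x$ \emph{together with these extra variables}, not in $x$ alone. So your $\Theta_\pi(x,\overline{z})$ is not well defined as stated, and the counting step that follows does not go through.

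The amalgamation step inherits the same problem. To reproduce a solution $x_0$ of $\varphi$, you must reproduce not just the $L$-type of $x_0$ over (a finite piece of) $\langle\overline{b}\rangle_{L_{\Sk}}$ and the combinatorial pattern, but the $L$-type of the whole tuple $(x_0,\tau_1(x_0,\overline{b}),\dots,\tau_m(x_0,\overline{b}))$, since this is what governs the truth of $\chi$. Your phrase ``same $L$-type as $x_0$'' does not secure this. The fix is to introduce variables $u_1,\dots,u_r$ for the new term values, take $\Theta_\pi(x,\overline{u},\overline{z})$ to be the resulting $L$-formula, apply elimination of $\exists^{\infty}$ in $T$ to its projection $\exists\overline{u}\,\Theta_\pi$, and in the infinite case amalgamate a fresh pair $(x_1,\overline{u}_1)$ realizing $\Theta_\pi$ while assigning the Skolem terms at $x_1$ the values $\overline{u}_1$; this last move is exactly what Winkler's structure theory (his Theorem~4) licenses. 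Your final paragraph gestures at this genericity, but the body of the argument does not use it in the place where it is needed.
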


\subsection{Preservation of $\NSOP{1}$}

In this subsection, suppose $T$ is a fixed model complete theory in the language $L$ that eliminates the quantifier $\exists^{\infty}$, and let $L'$ be an arbitrary language containing $L$.  We may choose monster models $\mathbb{M}_{\text{Sk}} \models T_{Sk}$ and $\mathbb{M}_{L'} \models T_{L'}$. We may assume that both monster models have a common reduct to a monster model $\mathbb{M} \models T$. Note that we do not assume $T$ is complete, so reasoning in $\mathbb{M}$ amounts to working in an arbitrary completion of $T$.    

\begin{defn}
Let $\mathbb{M}^{*}$ denote either $\mathbb{M}_{\text{Sk}}$ or $\mathbb{M}_{L'}$ and let $L^{*}$ denote the corresponding language, either $L_{\text{Sk}}$ or $L'$.  For $a,b \in \mathbb{M}^{*}$ and $M \prec_{L^{*}} \mathbb{M}^{*}$, define
$$
a \ind[*]_{M} b \iff \text{acl}_{L^{*}}(Ma) \ind[K]_{M} \text{acl}_{L^{*}}(Mb) \text{ in }\mathbb{M}.
$$
\end{defn}

\begin{thm}\label{thm:algpres}
Let $\mathbb{M}^{*}$ denote either $\mathbb{M}_{\text{Sk}}$ or $\mathbb{M}_{L'}$ and let $L^{*}$ denote the corresponding language, either $L_{\text{Sk}}$ or $L'$.  If $T$ is $\NSOP{1}$, 
then $\ind[*]$ satisfies the independence theorem.
\end{thm}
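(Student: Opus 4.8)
The plan is to verify the independence theorem for $\ind[*]$ by pushing the configuration down to $\mathbb{M}\models T$, applying the \emph{algebraically reasonable} independence theorem for $\ind[K]$ (Theorem~\ref{thm:algreasind}), and then using the extra algebraic-independence clauses to reconstruct an $L^*$-structure on the amalgam. So suppose we are given $M\prec_{L^*}\mathbb{M}^*$ and tuples $a,a',b,c$ with $a\ind[*]_M b$, $a'\ind[*]_M c$, $b\ind[*]_M c$, and $a\equiv_M a'$ in $\mathbb{M}^*$. Write $A = \acl_{L^*}(Ma)$, $B = \acl_{L^*}(Mb)$, $C' = \acl_{L^*}(Mc)$ (and correspondingly $A' = \acl_{L^*}(Ma')$), so that in $\mathbb{M}$ we have $A\ind[K]_M B$, $A'\ind[K]_M C'$, $B\ind[K]_M C'$, and $A\equiv^{\mathbb{M}}_M A'$ (the last because an $L^*$-elementary map fixing $M$ and sending $a$ to $a'$ restricts to one on $\mathbb{M}$, noting $M\prec_L\mathbb{M}$). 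Here one must be slightly careful that $M$ is a model of $T$, which holds since $M\prec_{L^*}\mathbb{M}^*$ implies $M\prec_L\mathbb{M}$.

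First I would apply Theorem~\ref{thm:algreasind} in $\mathbb{M}$ to get $A''$ with $A''\equiv^{\mathbb{M}}_{MB} A$, $A''\equiv^{\mathbb{M}}_{MC'} A'$, $A''\ind[K]_M BC'$, and crucially $A''\ind[a]_{MB} C'$, $A''\ind[a]_{MC'} B$, and $B\ind[a]_{MA''} C'$. Since in $\mathbb{M}$ algebraic closure is just $\acl_L$, while $A$, $B$, $C'$ were built using $\acl_{L^*}$, I should first replace $A, B, C'$ by their $L$-algebraic closures, or better, observe that $\acl_{L^*}(M e)$ is already $\acl_L$-closed when $e$ enumerates an $L^*$-structure containing $M$ — in fact the point of the $\acl_{L^*}$ in the definition of $\ind[*]$ is exactly that these sets carry $L^*$-structure. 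Let $a''$ be the subtuple of $A''$ corresponding to $a$. The three algebraic-independence clauses say precisely that the sets $A''$, $B$, $C'$ are pairwise "freely positioned": $A''\cap BC' $-type overlaps collapse to the base, and similarly for the other two pairs, all over $M$.

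Next, the key step: build the amalgamated $L^*$-structure. Since $A''$, $B$, $C'$ each carry an $L^*$-structure (they are $\acl_{L^*}$-closures of sets containing $M$, hence substructures of $\mathbb{M}^*$ containing $M$) agreeing on $M$ pairwise, and since the three pairwise algebraic independences guarantee the underlying sets amalgamate disjointly over the relevant bases inside $\mathbb{M}$, I would define an $L^*$-structure on $A''\cup B\cup C'$ (computed in $\mathbb{M}$) extending all three: the $L$-structure is inherited from $\mathbb{M}$, and the new function symbols of $L^*$ are defined on each of $A''$, $B$, $C'$ as there, using the disjointness to check well-definedness and, where needed, the model-completeness/$\exists^\infty$-elimination hypotheses (via Winkler's axioms for $T_{\Sk}$ or $T_{L'}$) to extend partial function values generically to all of $\mathbb{M}^*$. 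Then embed this $L^*$-structure back into $\mathbb{M}^*$ over $M$, fixing (a copy of) $BC'$, by quantifier-elimination/model-companion properties of $T_{L'}$ (resp.\ $T_{\Sk}$); call the image of $a''$ the desired $a^*$. By construction $a^*\equiv_{Mb} a$ and $a^*\equiv_{Mc} a'$ in $\mathbb{M}^*$, and $\acl_{L^*}(Ma^*)\ind[K]_M \acl_{L^*}(Mbc)$ in $\mathbb{M}$ — the latter because $\acl_{L^*}(Ma^*)$ is contained in (the $\acl_L$-closure of) $A''$ and $A''\ind[K]_M BC'$, using monotonicity of $\ind[K]$. Hence $a^*\ind[*]_M bc$.

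The main obstacle I anticipate is precisely the structure-building step: one needs to know that the $L^*$-structures on $A''$, $B$, $C'$ really do glue along a common $L$-structure without new identifications or constraints, which is exactly what the three algebraic-independence conclusions of Theorem~\ref{thm:algreasind} buy us — this is why the "algebraically reasonable" strengthenings were developed. A secondary subtlety is checking that $\acl_{L^*}(M e)$ coincides with what one expects relative to $\acl_L$ — i.e.\ that passing to $L^*$-algebraic closure after the amalgamation does not escape $A''$ in a way that wrecks $\ind[K]$-independence; this should follow because the amalgamated structure's $L^*$-algebraic closure of $Ma^*$ lands inside the structure we built on $A''\cup B\cup C'$, and the only new points it could pick up over $A''$ would have to be algebraic over $A''$ in $\mathbb{M}$, hence already in $A''$. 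The routine parts — that $M\prec_L\mathbb{M}$, that $\ind[*]$ inherits monotonicity and the other Kim-Pillay-style axioms from $\ind[K]$, and the bookkeeping of which tuple corresponds to which — I would state without belaboring.
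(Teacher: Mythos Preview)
Your overall strategy---reduce to $\mathbb{M}$, apply the algebraically reasonable independence theorem for $\ind[K]$, then rebuild an $L^*$-structure---is exactly the paper's approach. However, two steps in your execution are genuine gaps rather than routine bookkeeping.

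First, your final independence claim is $\acl_{L^*}(Ma^*)\ind[K]_M \acl_{L^*}(Mbc)$, but you only arranged $A''\ind[K]_M BC'$. The set $\acl_{L^*}(Mbc)$ is typically much larger than $B\cup C'$: it contains the values of the new $L^*$-functions on mixed tuples from $B$ and $C'$, and these need not lie in $\acl_L(BC')$. The paper fixes this with an extra application of algebraically reasonable \emph{extension} (Theorem~\ref{thm:algreasext}): after obtaining $x_{a''}$ from Theorem~\ref{thm:algreasind}, one moves it by an automorphism over $Mx_bx_c$ so that in addition $x_{a''}\ind[K]_M x_bx_cx_{bc}$ and $x_{a''}\ind[a]_{Mx_bx_c}x_{bc}$, where $x_{bc}$ enumerates $\acl_{L^*}(Mbc)\setminus(Mx_bx_c)$. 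Without this, the embedding cannot be taken to fix $\acl_{L^*}(Mbc)$, and the conclusion $a^*\ind[*]_M bc$ does not follow.

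Second, putting an $L^*$-structure on $A''\cup B\cup C'$ and extending ``generically'' does not give $a^*\equiv_{Mb}a$ in $\mathbb{M}^*$. The quantifier-free $L^*$-type of $a$ over $Mb$ is encoded by all of $\acl_{L^*}(Mab)$, including the values of new function symbols on mixed tuples; a generic extension will not reproduce this type. The paper therefore carries copies $x_{a''b}$ and $x_{a''c}$ of $\acl_{L^*}(Mab)\setminus(Mx_ax_b)$ and $\acl_{L^*}(Ma'c)\setminus(Mx_{a'}x_c)$ into $\mathbb{M}$ via the $L$-automorphisms sending $x_a,x_{a'}$ to $x_{a''}$, and then uses two further applications of extension for $\ind[a]$ (over the $\acl_L$-closed bases) to replace them by tuples $z_{a''b},z_{a''c}$ so that the seven pieces $x_M,x_{a''},x_b,x_c,z_{a''b},z_{a''c},x_{bc}$ are pairwise disjoint. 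Only then are the three flat $L^*$-diagrams $p_{a''b},p_{a''c},p_{bc}$ simultaneously realizable on a small $L$-model $\widehat{M}\prec\mathbb{M}$, which one expands to $L^*$ (checking the Skolem axioms in the $L_{\Sk}$ case) and embeds in $\mathbb{M}^*$ over $\acl_{L^*}(Mbc)$. Your ``disjointness'' clause from Theorem~\ref{thm:algreasind} handles only the three pairwise overlaps among $x_{a''},x_b,x_c$; the extra pieces $z_{a''b},z_{a''c},x_{bc}$ and their disjointness are essential and do not come for free.
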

\begin{proof}
We're given $M\prec\mathbb{M}^{*}$ and tuples $a,a',b,c$, with $a\ind[*]_M b$, $a'\ind[*]_M c$, $b\ind[*]_M c$, and $a\equiv_M a'$. Let $x_M$ be a tuple enumerating $M$, let $x_a$, $x_{a'}$, $x_b$ and $x_c$ be tuples enumerating $\acl_{L^{*}}(Ma)\setminus M$, $\acl_{L^{*}}(Ma')\setminus M$, $\acl_{L^{*}}(Mb)\setminus M$, and $\acl_{L^{*}}(Mc)\setminus M$, respectively, and let $x_{ab}$, $x_{a'c}$, and $x_{bc}$ be tuples enumerating $\acl_{L^{*}}(Mab)\setminus Mx_ax_b$, $\acl_{L^{*}}(Ma'c)\setminus Mx_{a'}x_c$, and $\acl_{L^{*}}(Mbc)\setminus Mx_bx_c$, respectively. 

We have $x_a\ind[K]_M x_b$, $x_{a'}\ind[K]_M x_c$, and $x_b\ind[K]_M x_c$ in $\mathbb{M}$. By the algebraically reasonable independence theorem in $T$, we can find $x_{a''}$ in $\mathbb{M}$ such that $x_{a''}\equiv^L_{Mx_b} x_a$, $x_{a''}\equiv^L_{Mx_c} x_{a'}$, $x_{a''}\ind[K]_M x_bx_c$, and further $x_{a''}\ind[a]_{Mx_b} x_c$, $x_{a''}\ind[a]_{Mx_c} x_b$, and $x_b\ind[a]_{Mx_{a''}} x_c$. By algebraically reasonable extension, at the expense of moving $x_{a''}$ by an automorphism in $\Aut(\mathbb{M}/Mx_bx_c)$, we may assume that $x_{a''}\ind[K]_M x_bx_cx_{bc}$ and $x_{a''}\ind[a]_{Mx_bx_c}x_{bc}$.

Pick an automorphism $\sigma\in \Aut(\mathbb{M}/Mx_b)$ moving $x_a$ to $x_{a''}$, and set $x_{a''b} = \sigma(x_{ab})$, so $x_{a''}x_{a''b}\equiv^L_{Mx_b} x_ax_{ab}$. Similarly, pick an automorphism $\sigma'\in \Aut(\mathbb{M}/Mx_c)$ moving $x_{a'}$ to $x_{a''}$, and set $x_{a''c} = \sigma'(x_{a'c})$, so $x_{a''}x_{a''c}\equiv^L_{Mx_c} x_{a'}x_{a'c}$. Now there are subtuples $y_{a''b}\subseteq x_{a''b}$ and $y_{a''c}\subseteq x_{a''c}$ enumerating $\acl_L(Mx_{a''}x_b)\setminus Mx_{a''}x_b$ and $\acl_L(Mx_{a''}x_c)\setminus Mx_{a''}x_c$, respectively. The algebraic independencies obtained so far imply that the tuples $x_M$, $x_{a''}$, $x_b$, $x_c$, $y_{a''b}$, $y_{a''c}$, and $x_{bc}$ are pairwise disjoint. 

By two applications of extension for algebraic independence over algebraically closed bases, we can find tuples $z_{a''b}$ and $z_{a''c}$ such that $z_{a''b}\equiv^L_{Mx_{a''}x_by_{a''b}} x_{a''b}$ and $z_{a''c}\equiv^L_{Mx_{a''}x_cy_{a''c}} x_{a''c}$, and so that the tuples $x_M$, $x_{a''}$, $x_b$, $x_c$, $z_{a''b}$, $z_{a''c}$, and $x_{bc}$ are pairwise disjoint. 

Let $\widehat{M}\prec \mathbb{M}$ be a small model of $T$ containing all these tuples.  We will expand $\widehat{M}$ to an $L^*$-structure, in order to embed it in $\mathbb{M}^{*}$. Let $p_{a''b} = \diagf(\acl_{L^{*}}(Mab))$, $p_{a''c} = \diagf(\acl_{L^{*}}(Ma'c))$, and $p_{bc} = \diagf(\acl_{L^{*}}(Mbc))$. We define interpretations of the relations, functions, and constants of $L^{*}$ according to $$p_{a''b}(x_M,x_{a''},x_b,z_{a''b})\cup p_{a''c}(x_M,x_{a''},x_c,z_{a''c})\cup p_{bc}(x_M,x_b,x_c,x_{bc}),$$ for the tuples that these diagrams refer to. This is consistent, since $p_{a''b}$, $p_{a''c}$, and $p_{bc}$ agree on $\diagf(\acl_{L^{*}}(Ma)) = \diagf(\acl_{L^{*}}(Ma'))$ (allowing $x_{a''}$ to enumerate both $\acl_{L^{*}}(Ma)\setminus M$ and $\acl_{L^{*}}(Ma')\setminus M$), $\diagf(\acl_{L^{*}}(Mb))$, and $\diagf(\acl_{L^{*}}(Mc))$. In the case that $L^{*} = L_{\text{Sk}}$, we observe that the values of the functions specified by these diagrams really give Skolem functions, since we have preserved the underlying $L$-types of all the tuples. For tuples not referred to by these diagrams, we define the interpretations of the relations and functions arbitrarily, taking care in the case that $L^* = L_{Sk}$ to satisfy the Skolem axioms (this is always possible, since $\widehat{M}$ is a model). 

Having expanded $\widehat{M}$ to an $L^{*}$-structure, we can embed it in $\mathbb{M}^{*}$ by $i\colon \widehat{M}\to \mathbb{M}^{*}$. Further, we may assume that $i$ is the identity on $(x_M,x_b,x_c,x_{bc})$, since $$\qftp_{\widehat{M}}^{L^{*}}(x_M,x_b,x_c,x_{bc}) = \qftp_{\mathbb{M}^{*}}^{L^{*}}(x_M,x_b,x_c,x_{bc}).$$ Let $a''$ be the subtuple of $i(x_M,x_{a''})$ corresponding to the subtuple of $(x_M,x_a)$ enumerating $a$.

Now $\qftp(x_M,i(x_{a''}),x_b,i(z_{a''b})) = \qftp(x_M,x_a,x_b,x_{ab})$, so $a''\equiv_{Mb} a$, and similarly $\qftp(x_M,i(x_{a''}),x_c,i(z_{a''c})) = \qftp(x_M,x_{a'},x_c,x_{a'c})$, so $a''\equiv_{Mc} a'$. And $a''\ind[*]_M bc$, since $i(x_{a''})$ (which enumerates $\acl_{L^{*}}(Ma'')\setminus M$) is Kim-independent over $M$ from $x_bx_cx_{bc}$ (which enumerates $\acl_{L^{*}}(Mbc)\setminus M$).
\end{proof}

\begin{cor}
Suppose $T$ is $\NSOP{1}$ 
Then:
\begin{enumerate}
\item  $T_\Sk$ is $\NSOP{1}$,
and $\ind[K] = \ind[*]$.
\item $T_{L'}$ is $\NSOP{1}$,
and $\ind[K] = \ind[*]$.  
\end{enumerate}
\end{cor}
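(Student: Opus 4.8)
The plan is to apply the $\NSOP{1}$-criterion (Theorem~\ref{criterion}) to the relation $\ind[*]$ on $\mathbb{M}^{*}$, where $\mathbb{M}^{*}$ and $L^{*}$ are as in Theorem~\ref{thm:algpres}, since that theorem already supplies the hardest hypothesis, the independence theorem (property~(5)). As usual it suffices to verify that $\text{Th}(\mathbb{M}^{*})$ is $\NSOP{1}$ for the arbitrary monster $\mathbb{M}^{*}$, which covers every completion of $T_{\Sk}$ and of $T_{L'}$. Observe first that for $M\prec_{L^{*}}\mathbb{M}^{*}$ the $L$-reduct $M|_{L}$ models $T$, hence $M|_{L}\prec_{L}\mathbb{M}$ by model completeness of $T$, so the condition defining $a\ind[*]_{M}b$ makes sense; and $\ind[*]$ is clearly $\Aut(\mathbb{M}^{*})$-invariant.

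Existence over models, monotonicity, and symmetry of $\ind[*]$ are immediate from the corresponding properties of $\ind[K]$ in $\mathbb{M}$ together with the monotonicity of $\acl_{L^{*}}$; e.g.\ $aa'\ind[*]_{M}bb'$ unwinds to $\acl_{L^{*}}(Maa')\ind[K]_{M}\acl_{L^{*}}(Mbb')$, and shrinking both algebraic closures gives $a\ind[*]_{M}b$. For strong finite character I would use the device from the proof of Lemma~\ref{algprops}: given $a\nind[*]_{M}b$, apply strong finite character of $\ind[K]$ in $\mathbb{M}$ to obtain finite tuples $\bar a_{0}\subseteq\acl_{L^{*}}(Ma)$, $\bar b_{0}\subseteq\acl_{L^{*}}(Mb)$, parameters $\bar m$ from $M$, and an $L$-formula $\psi(\bar x;\bar b_{0},\bar m)$ all of whose realizations in $\mathbb{M}$ are Kim-dependent on $\bar b_{0}$ over $M$; then, using that algebraic types over any set are isolated (with uniform bounds on the number of solutions, as in Lemma~\ref{algprops}), choose $L^{*}$-formulas $\chi(\bar x_{0};x)$ and $\theta(\bar y_{0};b)$ isolating $\tp_{L^{*}}(\bar a_{0}/Ma)$ and $\tp_{L^{*}}(\bar b_{0}/Mb)$, and set $\varphi(x;b):=\exists\bar x_{0}\,\exists\bar y_{0}\,(\chi(\bar x_{0};x)\wedge\theta(\bar y_{0};b)\wedge\psi(\bar x_{0};\bar y_{0},\bar m))$, suppressing the parameters from $M$. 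Then $\varphi\in\tp_{L^{*}}(a/Mb)$, and any realization $a'$ yields $\bar a_{0}'\subseteq\acl_{L^{*}}(Ma')$ and a tuple $\bar b_{0}'\equiv^{L}_{M}\bar b_{0}$ inside $\acl_{L^{*}}(Mb)$ with $\mathbb{M}\models\psi(\bar a_{0}';\bar b_{0}',\bar m)$, whence $\bar a_{0}'\nind[K]_{M}\bar b_{0}'$ in $\mathbb{M}$ by invariance and $\acl_{L^{*}}(Ma')\nind[K]_{M}\acl_{L^{*}}(Mb)$ by monotonicity, i.e.\ $a'\nind[*]_{M}b$. Having checked (1)--(5), Theorem~\ref{criterion} gives that $\text{Th}(\mathbb{M}^{*})$ is $\NSOP{1}$ and that $\ind[*]$ strengthens $\ind[K]$ over models.

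It remains to prove the reverse inclusion $a\ind[K]_{M}b\Rightarrow a\ind[*]_{M}b$, which yields $\ind[K]=\ind[*]$ over models. Rather than verifying the witnessing property (6), I would argue directly. By \cite[Corollary 5.17]{KRKim} applied inside $\mathbb{M}^{*}$ (together with symmetry and monotonicity), $a\ind[K]_{M}b$ in $\mathbb{M}^{*}$ is equivalent to $\acl_{L^{*}}(Ma)\ind[K]_{M}\acl_{L^{*}}(Mb)$ in $\mathbb{M}^{*}$, so it is enough to show: for $L^{*}$-algebraically closed $A,B$ with $M\subseteq A\cap B$, if $A\ind[K]_{M}B$ in $\mathbb{M}^{*}$ then $A\ind[K]_{M}B$ in $\mathbb{M}$. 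Contrapositively, suppose some $L$-formula $\psi(\bar x;\bar b_{0},\bar m)$ from $\tp_{L^{*}}(A/MB)$ Kim-divides over $M$ in $\mathbb{M}$. Choose any global $L^{*}$-type $q^{*}$ finitely satisfiable in $M$ extending the $L^{*}$-type of an enumeration of $B$ over $M$. A Morley sequence in $q^{*}$ over $M$ reduces to a coheir Morley sequence over $M$ in $\mathbb{M}$, which by Kim's lemma for $\NSOP{1}$ theories~\cite{KRKim} witnesses the Kim-dividing of $\psi$; the corresponding instances of $\psi$ are therefore inconsistent in $\mathbb{M}$, hence in $\mathbb{M}^{*}$. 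Since $q^{*}$ is $M$-invariant and extends the $L^{*}$-type of $B$ over $M$, this shows $\psi(\bar x;\bar b_{0},\bar m)$ Kim-divides over $M$ in $\mathbb{M}^{*}$; as it lies in $\tp_{L^{*}}(A/MB)$, we conclude $A\nind[K]_{M}B$ in $\mathbb{M}^{*}$, as needed.

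I expect the main obstacle to be this last transfer step: one must be careful that the $L$-reduct of a coheir Morley sequence in $\mathbb{M}^{*}$ is again a coheir Morley sequence, now in $\mathbb{M}$, and that Kim's lemma is invoked for $\text{Th}(\mathbb{M})$, a completion of the $\NSOP{1}$ theory $T$ — this is the one point where the hypothesis that $T$ itself is $\NSOP{1}$ is essential, as opposed to merely asking the expansion to be. By contrast the strong finite character argument, though fiddly, is a routine adaptation of the proof of Lemma~\ref{algprops} once one recalls that algebraic types over arbitrary sets are isolated.
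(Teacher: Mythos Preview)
Your argument is correct and, for properties (1)--(5) of Theorem~\ref{criterion}, essentially identical to the paper's: invoke Theorem~\ref{thm:algpres} for the independence theorem, read off existence, monotonicity, and symmetry from the reduct, and build the strong finite character formula by sandwiching the $L$-formula from $\ind[K]$ between isolating algebraic formulas $\chi$ and $\theta$.

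The one genuine difference is in establishing $\ind[K]=\ind[*]$. The paper verifies property~(6) (witnessing) by claiming that the \emph{same} formula $\varphi'$ used for strong finite character also works for witnessing. You instead bypass~(6) entirely: once (1)--(5) give $\NSOP{1}$ for $\mathbb{M}^{*}$, you invoke \cite[Corollary~5.17]{KRKim} to reduce to $L^{*}$-algebraically closed sets, and then transfer a single instance of Kim-dividing from $\mathbb{M}$ to $\mathbb{M}^{*}$ by observing that the $L$-reduct of an $M$-coheir Morley sequence in $\mathbb{M}^{*}$ is again one in $\mathbb{M}$, where Kim's lemma applies. Your route is arguably cleaner: the paper's ``similarly'' glosses over the fact that, given an arbitrary coheir Morley sequence $(b_{i})$ in $\mathbb{M}^{*}$, the existential witnesses $y'$ to $\psi(y',b_{i},m_{2})$ need not themselves form a coheir sequence, so an additional extension-plus-pigeonhole step (over the finitely many conjugates of $b'$ over $Mb$) is implicitly required. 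Your direct transfer avoids this bookkeeping at the cost of one extra citation to \cite{KRKim}. Both approaches rest on the same observation you flag at the end: coheir Morley sequences pass to the $L$-reduct, and Kim's lemma in $T$ does the work.
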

\begin{proof}
We use Theorem~\ref{criterion}.
By Theorem~\ref{thm:algpres}, $\ind[*]$ satisfies the independence theorem. Existence over models, monotonicity, and symmetry follow immediately from the definition and the corresponding properties of $\ind[K]$ in the reduct.

Now suppose $a\nind[*]_M b$, witnessed by $a'\in \acl_{L^*}(Ma)$ and $b'\in \acl_{L^*}(Mb)$ such that $a'\nind[K]_M b'$ in the reduct. Let $\varphi(x';b',m)$ be the $L$-formula given strong finite character for $\ind[K]$ in the reduct. Let $\chi(x',a,m_1)$ isolate $\tp_{L^*}(a'/Ma)$, and let $\psi(y',b,m_2)$ isolate $\tp_{L^*}(b'/Mb)$. Then the formula $\varphi'(x;b,m,m_1,m_2)$ given by $\exists x'\,\exists y'\,(\chi(x',x,m_1)\land \psi(y',b,m_2)\land \varphi(x',y',m))$ gives strong finite character for $\ind[*]$. Similarly, if $\varphi$ gives witnessing for $\ind[K]$ in the reduct, the same formula $\varphi'$ gives witnessing for $\ind[*]$.
\end{proof}

\begin{rem}
By \cite[Lemma 3.1]{nubling2004adding}, $T_{\Sk}$ will never be a simple theory.  
\end{rem}

\subsection{Iterating to get built-in Skolem functions}

\begin{defn}
An $L$-theory $T$ has built-in Skolem functions if, for every formula $\varphi(x;y) \in L$ with $l(x)=1$, there is a definable $l(y)$-ary function $f_{\varphi}$ so that 
$$
T \models \forall y\, ( \exists x\, \varphi(x;y) \to \varphi(f_{\varphi}(y);y) ).
$$
\end{defn}

\begin{cor} \label{builtin}
Any $\NSOP{1}$ theory $T$ which eliminates $\exists^\infty$ has an expansion to an $\NSOP{1}$ theory $T_{\Sk}^\infty$ in a language $L_{\Sk}^\infty$ with built-in Skolem functions. Moreover, if $\mathbb{M}_{\Sk}^\infty$ is a monster model for $T_{\Sk}^\infty$, and $\mathbb{M}$ is its reduct to $L$, then for every $M\prec \mathbb{M}_{\Sk}^\infty$ and tuples $a$ and $b$, $$a\ind[K]_M b \text{ in $\mathbb{M}_{\Sk}^\infty$} \iff \acl_{L_{\Sk}^\infty}(Ma) \ind[K]_M \acl_{L_{\Sk}^\infty}(Mb)\text{ in }\mathbb{M}.$$
\end{cor}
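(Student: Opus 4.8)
The plan is to iterate the generic Skolemization operation $T \mapsto T_{\Sk}$ through $\omega$ steps and take a limit, then invoke the preceding corollary at each step to propagate $\NSOP{1}$ and the characterization of $\ind[K]$. First I would set $T^0 = T$, $L^0 = L$, and $T^{n+1} = (T^n)_{\Sk}$, $L^{n+1} = (L^n)_{\Sk}$; Fact~\ref{winkler} gives that each $T^{n+1}$ exists and is model complete, Fact~\ref{stillreasonable} gives that it still eliminates $\exists^\infty$, and the preceding corollary gives that each $T^{n+1}$ is $\NSOP{1}$ whenever $T^n$ is. So by induction every $T^n$ is a model complete $\NSOP{1}$ theory eliminating $\exists^\infty$. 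Then I would set $L_{\Sk}^\infty = \bigcup_{n<\omega} L^n$ and $T_{\Sk}^\infty = \bigcup_{n<\omega} T^n$ (the latter being the union of the elementary diagrams in the increasing chain of languages, or more carefully the deductive closure), and check that $T_{\Sk}^\infty$ is consistent and complete relative to $T$ — consistency follows since any model of $T$ can be expanded step by step, and the union is a directed union of model complete theories so every $L_{\Sk}^\infty$-sentence or its negation is already decided at some finite stage.

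Next I would verify that $T_{\Sk}^\infty$ has built-in Skolem functions. Given any $L_{\Sk}^\infty$-formula $\varphi(x;y)$ with $l(x) = 1$, $\varphi$ uses only finitely many symbols, so $\varphi \in L^n$ for some $n$; then the symbol $f_\varphi$ added at stage $n+1$ lies in $L^{n+1} \subseteq L_{\Sk}^\infty$, and the Skolem axiom for $\varphi$ is among the axioms of $T^{n+1}$ (from the definition of the Skolem expansion $(T^n)_+$, which is contained in $T^{n+1}$), hence is a consequence of $T_{\Sk}^\infty$. Since $T^{n+1}$ is model complete, $f_\varphi$ is in particular a definable function there, so it is definable in $T_{\Sk}^\infty$.

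For the characterization of Kim-independence, the key point is that it is a statement about a fixed monster model, so I would argue at the level of models rather than chasing the definition through the limit directly. Let $\mathbb{M}_{\Sk}^\infty \models T_{\Sk}^\infty$ be a monster model with reduct $\mathbb{M} \models T$, and for each $n$ let $\mathbb{M}^n$ be its reduct to $L^n$, a monster model of $T^n$. Applying the preceding corollary at each stage with base theory $T^n$ and expansion to $L^{n+1}$, we get for any $M \prec \mathbb{M}_{\Sk}^\infty$ (with $M^n = M \restriction L^n$) that
\[
a\ind[K]_{M^{n+1}} b \text{ in }\mathbb{M}^{n+1} \iff \acl_{L^{n+1}}(M^n a)\ind[K]_{M^n} \acl_{L^{n+1}}(M^n b)\text{ in }\mathbb{M}^n.
\]
I would then show that Kim-independence over $M$ in $\mathbb{M}_{\Sk}^\infty$ agrees with Kim-independence over $M^n$ in $\mathbb{M}^n$ for $n$ large enough relative to the relevant parameters — or, cleaner, combine the finitely-many-symbols observation with strong finite character (property (1) of Theorem~\ref{criterion}, which $\ind[K]$ satisfies in each $T^n$ and in $T_{\Sk}^\infty$) to reduce any instance of Kim-dividing to a single formula living in some $L^n$. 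Composing the chain of equivalences and using that $\acl_{L_{\Sk}^\infty}(Ma) = \bigcup_n \acl_{L^n}(M^n a)$ (each algebraic closure is an increasing union, since algebraicity is witnessed by a single formula) together with the fact that $\ind[K]$ in $\mathbb{M}$ over a model depends only on the algebraic closure of the parameter set on each side, yields the claimed equivalence. The main obstacle I anticipate is the bookkeeping needed to pass the characterization of $\ind[K]$ through the $\omega$-indexed limit cleanly: one must ensure that an instance of Kim-dividing in the union is already an instance of Kim-dividing at a finite stage (this is where strong finite character, witnessing, and the definition of tree Morley sequences via finitely many parameters all pull their weight), and that the nested algebraic closures $\acl_{L^n}$ stabilize appropriately on the relevant finite tuples. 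Everything else is a routine limit-of-a-chain argument.
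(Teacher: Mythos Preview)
Your approach matches the paper's closely. Two small points of difference. First, the paper begins by Morleyizing $T$ to obtain a model complete $T_0$, since the corollary is stated for \emph{any} $\NSOP{1}$ theory eliminating $\exists^\infty$, and Fact~\ref{winkler} requires model completeness of the base theory; your choice $T^0 = T$ silently assumes this, so you should insert the Morleyization step. Second, for the limit step the paper makes the transfer of Kim-dividing between $\mathbb{M}_n$ and $\mathbb{M}_{\Sk}^\infty$ precise via Kim's lemma for Kim-dividing \cite[Theorem 3.15]{KRKim}: an $M$-finitely satisfiable Morley sequence in $\tp_{L_{\Sk}^\infty}(b/M)$ is automatically one in $\tp_{L_n}(b/M)$, so an $L_n$-formula Kim-divides over $M$ in one structure iff it does in the other. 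You correctly identify this as the main obstacle but leave the mechanism vague; invoking Kim's lemma is the clean way to do it.
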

\begin{proof}
We define $T_{\Sk}^\infty$ by induction. Let $T_0$ be the Morleyization of $T$ in the expanded language $L_0$, so $T_0$ is model complete. Given $T_n$, which we may assume by induction to be a model complete $\NSOP{1}$ theory which eliminates $\exists^\infty$, let $L_{n+1} = (L_n)_{\Sk}$, and let $T_{n+1} = (T_n)_{\Sk}$. Then, by Fact~\ref{stillreasonable} and Theorem~\ref{thm:algpres}, $T_{n+1}$ is again a model complete $\NSOP{1}$ theory which eliminates $\exists^\infty$ and has Skolem function for formulas in the language of $T_n$. And by Theorem~\ref{thm:algpres} and induction, if $\mathbb{M}_{n+1}$ is a monster model for $T_{n+1}$, and $\mathbb{M}$ is its reduct to $L$,
\begin{align*}
a\ind[K]_M b \text{ in $\mathbb{M}_{n+1}$} &\iff \acl_{L_{n+1}}(Ma) \ind[K]_M \acl_{L_{n+1}}(Mb)\text{ in }\mathbb{M}_n\\
&\iff \acl_{L_{n+1}}(Ma) \ind[K]_M \acl_{L_{n+1}}(Mb)\text{ in }\mathbb{M},
\end{align*}
since a set which is $\acl_{L_{n+1}}$-closed is also $\acl_{L_n}$-closed.

Then the theory $T^\infty_{\Sk} = \bigcup_{n<\omega} T_n$ is $\NSOP{1}$ and has built-in Skolem functions. It remains to lift the characterization of Kim-independence to this theory. 

If $\varphi(x;b)$ is a formula in $L^{\infty}_{\Sk}$, possibly with parameters in $M$, there is some $n$ so that $\varphi(x;y) \in L_{n}$.  Let $I = (b_{i})_{ < \omega}$ be an $M$-finitely satisfiable sequence in $\text{tp}_{L^{\infty}_{\Sk}}(b/M)$.  Then $I$ is also an $M$-finitely satisfiable sequence in $\text{tp}_{L_{n}}(b/M)$.  By Kim's lemma for Kim-dividing \cite[Theorem 3.15]{KRKim}, $\varphi(x;b)$ Kim-divides over $M$ if and only if $\{\varphi(x;b_{i}) \mid i < \omega\}$ is consistent, hence $\varphi(x;b)$ Kim-divides over $M$ in $\mathbb{M}_{n}$ if and only if $\varphi(x;b)$ Kim-divides over $M$ in $\mathbb{M}^{\infty}_{\Sk}$.  It follows that $a \ind[K]_{M} b$ in $\mathbb{M}^{\infty}_{\Sk}$ if and only if $a \ind[K]_{M} b$ in $\mathbb{M}_{n}$ for all $n$.

Now we show $a \ind[K]_{M} b$ in $\mathbb{M}^{\infty}_{\Sk}$ if and only if $\text{acl}_{L^{\infty}_{\Sk}}(Ma) \ind[K]_{M} \text{acl}_{L^{\infty}_{\Sk}}(Mb)$ in $\mathbb{M}$.  If $\text{acl}_{L^{\infty}_{\Sk}}(Ma) \nind^{K}_{M} \text{acl}_{L^{\infty}_{\Sk}}(Mb)$ in $\mathbb{M}$, then there is $n$ so that $\text{acl}_{L_{n}}(Ma) \nind^{K}_{M} \text{acl}_{L_{n}}(Mb)$ hence there is some formula $\varphi(x;b) \in \text{tp}_{L_{n}}(a/Mb)$ that Kim-divides over $M$ in $\mathbb{M}_{n}$.  This formula witnesses $a \nind^{K}_{M} b$ in $\mathbb{M}^{\infty}_{\Sk}$.  Conversely, if $a \nind^{K}_{M} b$ in $\mathbb{M}^{\infty}_{\Sk}$, then there is some formula $\varphi(x;b) \in \text{tp}_{L^{\infty}_{\Sk}}(a/Mb)$ witnessing this.  Then for some $n$, $\varphi(x;b) \in \text{tp}_{L_{n}}(a/Mb)$ and this formula witnesses $a \nind^{K}_{M} b$ in $\mathbb{M}_{n}$.  It follows that $\text{acl}_{L_{n}}(Ma) \nind^{K}_{M} \text{acl}_{L_{n}}(Mb)$ in $\mathbb{M}$, and therefore $\text{acl}_{L^{\infty}_{\Sk}}(Ma) \nind^{K}_{M} \text{acl}_{L^{\infty}_{\Sk}}(Mb)$ by monotonicity.  
\end{proof}

\subsection*{Acknowledgements}
This work constitutes part of the dissertation of the second-named author, advised by Thomas Scanlon.  We wish to thank Artem Chernikov for pointing out the results of Herwig N\"ubling mentioned above and Emil Je\v{r}\`{a}bek for sharing his preprint with us and allowing us to use it here.

\bibliographystyle{alpha}
\bibliography{ms.bib}{}

\begin{thebibliography}{Win75}

\bibitem[Adl09]{Adler}
Hans Adler.
\newblock A geometric introduction to forking and thorn-forking.
\newblock {\em J. Math. Log.}, 9(1):1--20, 2009.

\bibitem[Che14]{ChernikovNTP2}
Artem Chernikov.
\newblock Theories without the tree property of the second kind.
\newblock {\em Ann. Pure Appl. Logic}, 165(2):695--723, 2014.

\bibitem[CK90]{ChangKeisler}
C.~C. Chang and H.~J. Keisler.
\newblock {\em Model theory}, volume~73 of {\em Studies in Logic and the
  Foundations of Mathematics}.
\newblock North-Holland Publishing Co., Amsterdam, third edition, 1990.

\bibitem[Con17]{Conant17}
Gabriel Conant.
\newblock Forking and dividing in henson graphs.
\newblock {\em Notre Dame J. Formal Logic}, advance publication, 2017.

\bibitem[CP98]{chatzidakis1998generic}
Zo{\'e} Chatzidakis and Anand Pillay.
\newblock Generic structures and simple theories.
\newblock {\em Annals of Pure and Applied Logic}, 95(1):71--92, 1998.

\bibitem[CR16]{ArtemNick}
Artem Chernikov and Nicholas Ramsey.
\newblock On model-theoretic tree properties.
\newblock {\em Journal of Mathematical Logic}, page 1650009, 2016.

\bibitem[DS04]{dvzamonja2004maximality}
Mirna D{\v{z}}amonja and Saharon Shelah.
\newblock On {$\triangleleft^*$}-maximality.
\newblock {\em Annals of Pure and Applied Logic}, 125(1):119--158, 2004.

\bibitem[Hod93]{hodges1993model}
Wilfrid Hodges.
\newblock {\em Model theory}, volume~42.
\newblock Cambridge University Press Cambridge, 1993.

\bibitem[Hru91]{hrushovski1991pseudo}
Ehud Hrushovski.
\newblock Pseudo-finite fields and related structures.
\newblock {\em Model theory and applications}, 11:151--212, 1991.

\bibitem[Je{\v{r}}16]{jerabek}
Emil Je{\v{r}}{\'{a}}bek.
\newblock Recursive functions and existentially closed structures.
\newblock Preprint, February 2016.

\bibitem[KR17]{KRKim}
Itay Kaplan and Nicholas Ramsey.
\newblock On {K}im-independence.
\newblock {\em arXiv preprint arXiv:1702.03894}, 2017.

\bibitem[N{\"u}b04]{nubling2004adding}
Herwig N{\"u}bling.
\newblock Adding {S}kolem functions to simple theories.
\newblock {\em Archive for Mathematical Logic}, 43(3):359--370, 2004.

\bibitem[SU08]{shelah2008more}
Saharon Shelah and Alexander Usvyatsov.
\newblock More on {SOP}$_1$ and {SOP}$_2$.
\newblock {\em Annals of Pure and Applied Logic}, 155(1):16--31, 2008.

\bibitem[Win75]{winkler1975model}
Peter~M Winkler.
\newblock Model-completeness and {S}kolem expansions.
\newblock In {\em Model Theory and Algebra}, pages 408--463. Springer, 1975.

\end{thebibliography}

\end{document}